\newcommand*\fullref[3][\relax]{%
  \ifdefined\hyperref%
    {\hyperref[#3]{#2\penalty 200\ \ref*{#3}#1}}%
  \else%
    {#2\penalty 200\ \relax\ref{#3}#1}%
  \fi%
}
\tikzset{
  normalarrow/.style={line width=.6pt},
}
\tikzset{
  normalarrowlabel/.style={
    auto,
    inner sep=.5mm,
    outer sep=0mm,
    font=\footnotesize,
  },
  tinyarrowlabel/.style={
    auto
    inner sep=.2mm,
    outer sep=0mm,
    font=\tiny,
  },
  arrowlabel/.style={
    normalarrowlabel
  },
  marrowlabel/.style={
    normalarrowlabel,
  },
  crystaledges/.style={
    f1/.style={
      ->,
      normalarrow,
      labelled/.style={every to/.style={edge node={node[marrowlabel] {$1$}}}},
      coloured/.style={draw=red},
    },
    f2/.style={
      ->,
      normalarrow,
      labelled/.style={every to/.style={edge node={node[marrowlabel] {$2$}}}},
      coloured/.style={draw=blue},
    },
    f3/.style={
      ->,
      normalarrow,
      labelled/.style={every to/.style={edge node={node[marrowlabel] {$3$}}}},
      coloured/.style={draw=green!50!black},
    },
    f4/.style={
      ->,
      normalarrow,
      labelled/.style={every to/.style={edge node={node[marrowlabel] {$4$}}}},
      coloured/.style={draw=brown},
    },
    f5/.style={
      ->,
      normalarrow,
      labelled/.style={every to/.style={edge node={node[arrowlabel] {$5$}}}},
      coloured/.style={draw=cyan},
    },
    f6/.style={
      ->,
      normalarrow,
      labelled/.style={every to/.style={edge node={node[arrowlabel] {$6$}}}},
      coloured/.style={draw=violet},
    },
    f7/.style={
      ->,
      normalarrow,
      labelled/.style={every to/.style={edge node={node[arrowlabel] {$7$}}}},
      coloured/.style={draw=brown!50!black},
    },
    f8/.style={
      ->,
      normalarrow,
      labelled/.style={every to/.style={edge node={node[arrowlabel] {$7$}}}},
      coloured/.style={draw=magenta},
    },
    f9/.style={
      ->,
      normalarrow,
      labelled/.style={every to/.style={edge node={node[arrowlabel] {$7$}}}},
      coloured/.style={draw=brown!50!black},
    },
    f10/.style={
      ->,
      normalarrow,
      labelled/.style={every to/.style={edge node={node[arrowlabel] {$7$}}}},
      coloured/.style={draw=lime},
    },
    f11/.style={
      ->,
      normalarrow,
      labelled/.style={every to/.style={edge node={node[arrowlabel] {$7$}}}},
      coloured/.style={draw=olive},
    },
    f12/.style={
      ->,
      normalarrow,
      labelled/.style={every to/.style={edge node={node[arrowlabel] {$7$}}}},
      coloured/.style={draw=pink},
    },
    fi/.style={
      ->,
      normalarrow,
      labelled/.style={every to/.style={edge node={node[marrowlabel] {$i$}}}},
      coloured/.style={draw=gray},
    },
    fn2/.style={
      ->,
      normalarrow,
      labelled/.style={every to/.style={edge node={node[marrowlabel] {$n{-}2$}}}},
      coloured/.style={draw=brown},
    },
    fn1/.style={
      ->,
      normalarrow,
      labelled/.style={every to/.style={edge node={node[arrowlabel] {$n{-}1$}}}},
      coloured/.style={draw=cyan},
    },
    fn/.style={
      ->,
      normalarrow,
      labelled/.style={every to/.style={edge node={node[arrowlabel] {$n$}}}},
      coloured/.style={draw=violet},
    },
    df1/.style={
      f1,
      densely dotted,
    },
    df2/.style={
      f2,
      densely dotted,
    },
    df3/.style={
      f3,
      densely dotted,
    },
    dfi/.style={
      fi,
      densely dotted,
    },
    dfn2/.style={
      fn2,
      densely dotted,
    },
    dfn1/.style={
      fn1,
      densely dotted,
    },
    dfn/.style={
      fn,
      densely dotted,
    },
  },
  labelledcrystaledges/.style={
    crystaledges,
    f1/.append style={labelled},
    f2/.append style={labelled},
    f3/.append style={labelled},
    f4/.append style={labelled},
    f5/.append style={labelled},
    f6/.append style={labelled},
    f7/.append style={labelled},
    f8/.append style={labelled},
    f9/.append style={labelled},
    f10/.append style={labelled},
    f11/.append style={labelled},
    f12/.append style={labelled},
    fi/.append style={labelled},
    fn2/.append style={labelled},
    fn1/.append style={labelled},
    fn/.append style={labelled},
  },
  colouredcrystaledges/.style={
    crystaledges,
    f1/.append style={coloured},
    f2/.append style={coloured},
    f3/.append style={coloured},
    f4/.append style={coloured},
    f5/.append style={coloured},
    f6/.append style={coloured},
    f7/.append style={coloured},
    f8/.append style={coloured},
    f9/.append style={coloured},
    f10/.append style={coloured},
    f11/.append style={coloured},
    f12/.append style={coloured},
    fi/.append style={coloured},
    fn2/.append style={coloured},
    fn1/.append style={coloured},
    fn/.append style={coloured},
  },
  labelledcolouredcrystaledges/.style={
    crystaledges,
    f1/.append style={labelled,coloured},
    f2/.append style={labelled,coloured},
    f3/.append style={labelled,coloured},
    f4/.append style={labelled,coloured},
    f5/.append style={labelled,coloured},
    f6/.append style={labelled,coloured},
    f7/.append style={labelled,coloured},
    f8/.append style={labelled,coloured},
    f9/.append style={labelled,coloured},
    f10/.append style={labelled,coloured},
    f11/.append style={labelled,coloured},
    f12/.append style={labelled,coloured},
    fi/.append style={labelled,coloured},
    fn2/.append style={labelled,coloured},
    fn1/.append style={labelled,coloured},
    fn/.append style={labelled,coloured},
  },
  crystalvertex/.style={
    font=\small,
    inner sep=.5mm,
    outer sep=0mm,
  },
  smallcrystalvertex/.style={
    crystalvertex,
    font=\scriptsize,
  },
  bigcrystalvertex/.style={
    crystalvertex,
    inner sep=1mm,
    font=\normalsize,
  },
  crystal/.style={
    x=10mm,
    y=10mm,
    every node/.style={crystalvertex},
    labelledcrystaledges,
  },
  bigcrystal/.style={
    x=15mm,
    y=15mm,
    every node/.style={bigcrystalvertex},
    labelledcrystaledges,
  },
  smallcrystal/.style={
    x=7mm,
    y=7mm,
    every node/.style={smallcrystalvertex},
    colouredcrystaledges,
  },
}
\tikzset{
  bst/.style={
    standard/.style={
      font=\small,
      draw=gray,
      rounded rectangle,
      minimum width=4.5mm,
      minimum height=4.5mm,
      inner xsep=0mm,
      inner ysep=1mm,
      outer sep=0mm,
      line width=.5pt,
    },
    empty/.style={
      minimum width=3mm,
      minimum height=3mm,
    },
    triangle/.style={
      isosceles triangle,
      isosceles triangle apex angle=60,
      shape border rotate=90,
      rounded corners=2mm,
      minimum width=8mm,
      inner xsep=0mm,
      inner ysep=.5mm
    },
    blank/.style={
      draw=none,
    },
    nodecount/.style={
      blank,
      font=\scriptsize,
    },
    every node/.style={standard},
    every child/.style={draw=black,line width=.6pt},
    level distance=10mm,
    level 1/.style={sibling distance=60mm},
    level 2/.style={sibling distance=30mm},
    level 3/.style={sibling distance=15mm},
  },
  medbst/.style={
    bst,
    level distance=10mm,
    level 1/.style={sibling distance=15mm},
    level 2/.style={sibling distance=15mm},
    level 3/.style={sibling distance=15mm},
  },
  smallbst/.style={
    bst,
    level distance=8mm,
    level 1/.style={sibling distance=10mm},
    level 2/.style={sibling distance=10mm},
    level 3/.style={sibling distance=10mm},
  },
  tinybst/.style={
    bst,
    level distance=5mm,
    level 1/.style={sibling distance=8mm},
    level 2/.style={sibling distance=8mm},
    level 3/.style={sibling distance=8mm},
    every node/.append style={
      font=\footnotesize,
    },
    triangle/.append style={
      rounded corners=1mm,
      minimum width=7mm,
      inner xsep=-.5mm,
    },
  },
  microbst/.style={
    bst,
    standard/.append style={
      font=\scriptsize,
      minimum width=3mm,
      minimum height=3mm,
      inner ysep=.25mm,
    },
    level distance=3mm,
    level 1/.style={sibling distance=6mm},
    level 2/.style={sibling distance=6mm},
    level 3/.style={sibling distance=6mm},
  },
  nanobst/.style={
    bst,
    standard/.append style={
      font=\tiny,
      minimum width=2mm,
      minimum height=2mm,
      inner ysep=.25mm,
    },
    level distance=2mm,
    level 1/.style={sibling distance=4mm},
    level 2/.style={sibling distance=4mm},
    level 3/.style={sibling distance=4mm},
  },
}
\tikzset{
  mogrifyarrow/.style={
    ->,
    >/.tip=Computer Modern Rightarrow,
    decorate,
    decoration={
      zigzag,
      amplitude=0.2em,
      segment length=0.35em,
      pre length=0.35em,
      post length=0.35em,
    },
  },
}
\theoremstyle{definition}
\newtheorem{definition}{Definition}[section]
\newtheorem{algorithm}[definition]{Algorithm}
\newtheorem{method}[definition]{Method}
\newtheorem{remark}[definition]{Remark}
\theoremstyle{plain}
\newtheorem{corollary}[definition]{Corollary}
\newtheorem{lemma}[definition]{Lemma}
\newtheorem{proposition}[definition]{Proposition}
\newtheorem{theorem}[definition]{Theorem}
\numberwithin{equation}{section}
\newcommand*{\textparens}[1]{\textup{(}#1\textup{)}}
\newcommand*{\defterm}[1]{\emph{#1}}
\newcommand\chyph{\penalty\@M-\hskip\z@skip}
\DeclarePairedDelimiter{\abs}{\lvert}{\rvert}
\DeclarePairedDelimiter{\parens}{\lparen}{\rparen}
\DeclarePairedDelimiter{\set}{\{}{\}}
\DeclarePairedDelimiterX{\gset}[2]{\{}{\}}{\,#1:#2\,}
\newcommand\gsetsplit[3][]{\mathopen#1\{\,#2:#3\,\mathclose#1\}}
\newcommand*{\biggg}{\bBigg@{4}}
\newcommand*{\Biggg}{\bBigg@{5}}
\newcommand*{\sizeddelimiter}[2]{\bBigg@{#1}#2}
\newcommand*{\nset}{\mathbb{N}}
\newcommand*{\emptyword}{\varepsilon}
\newcommand*{\wlen}[2][]{\abs[#1]{#2}}
\DeclarePairedDelimiterX{\pres}[2]{\langle}{\rangle}{#1\,\delimsize\vert\,\mathopen{}#2}
\newcommand*{\drel}[1]{\mathcal{#1}}
\newcommand*\e{\ddot{e}}
\newcommand*\f{\ddot{f}}
\newcommand*\ecount{\ddot\epsilon}
\newcommand*\ke{\tilde{e}}
\newcommand*\kf{\tilde{f}}
\newcommand*\kecount{\tilde\epsilon}
\newcommand*\kfcount{\tilde\phi}
\newcommand*{\aA}{\mathcal{A}}
\newcommand*{\stdlit}{\mathrm{std}}
\newcommand*{\std}[2][]{\stdlit\parens[#1]{#2}}
\newcommand*{\clen}[2][]{\ell\parens[#1]{#2}}
\newcommand*{\cwt}[2][]{\abs[#1]{#2}}
\newcommand*{\wt}[2][]{\wtlit\parens[#1]{#2}}
\newcommand*{\wtlit}{\mathrm{wt}}
\newcommand*{\tlen}[2][]{\abs[#1]{#2}}
\newcommand*{\plac}{{\mathsf{plac}}}
\newcommand*{\hypo}{{\mathsf{hypo}}}
\newcommand*{\sylv}{{\mathsf{sylv}}}
\newcommand*{\baxt}{{\mathsf{baxt}}}
\newcommand*{\placcong}{\equiv_\plac}
\newcommand*{\hypocong}{\equiv_\hypo}
\newcommand*{\sylvcong}{\equiv_\sylv}
\newcommand*{\baxtcong}{\equiv_\baxt}
\newcommand*{\colreading}[2][]{\mathrm{C}\parens[#1]{#2}}
\newcommand*{\descomp}[2][]{\mathcal{DC}\parens[#1]{#2}}
\newcommand*{\cnp}[2][]{\mathrm{cnp}\parens[#1]{#2}}
\newcommand*{\dectree}[2][]{\mathrm{DecT}\parens[#1]{#2}}
\newcommand*{\inctree}[2][]{\mathrm{IncT}\parens[#1]{#2}}
\newcommand*{\ltree}[2][]{\mathrm{T}_{\mathrm{L}}\parens[#1]{#2}}
\newcommand*{\rtree}[2][]{\mathrm{T}_{\mathrm{R}}\parens[#1]{#2}}
\newcommand*{\lrectree}[2][]{\mathrm{TRec}_{\mathrm{L}}\parens[#1]{#2}}
\newcommand*{\rrectree}[2][]{\mathrm{TRec}_{\mathrm{R}}\parens[#1]{#2}}
\newcommand*{\inreading}[2][]{\mathrm{In}\parens[#1]{#2}}
\newcommand*{\postreading}[2][]{\mathrm{Post}\parens[#1]{#2}}
\newcommand*{\plit}{\mathrm{P}}
\newcommand*{\qlit}{\mathrm{Q}}
\newcommand*{\pplac}[2][]{\plit_{\plac}\parens[#1]{#2}}
\newcommand*{\qplac}[2][]{\qlit_{\plac}\parens[#1]{#2}}
\newcommand*{\phypo}[2][]{\plit_{\hypo}\parens[#1]{#2}}
\newcommand*{\qhypo}[2][]{\qlit_{\hypo}\parens[#1]{#2}}
\newcommand*{\psylv}[2][]{\plit_{\sylv}\parens[#1]{#2}}
\newcommand*{\qsylv}[2][]{\qlit_{\sylv}\parens[#1]{#2}}
\newcommand*{\pbaxt}[2][]{\plit_{\baxt}\parens[#1]{#2}}
\newcommand*{\qbaxt}[2][]{\qlit_{\baxt}\parens[#1]{#2}}
\newcommand*{\shape}[2][]{\shapelit\parens[#1]{#2}}
\newcommand*{\shapelit}{\sigma}
\newcommand*{\sylvshape}[2][]{\sylvshapelit\parens[#1]{#2}}
\newcommand*{\sylvshapelit}{\shapelit_{\sylv}}
\newcommand*{\sylvisom}{\sim_{\sylv}}
\newcommand*{\baxtshape}[2][]{\baxtshapelit\parens[#1]{#2}}
\newcommand*{\baxtshapelit}{\shapelit_{\baxt}}
\newcommand*{\baxtisom}{\sim_{\baxt}}
\newcommand*{\Sh}[2][]{\Shlit\parens[#1]{#2}}
\newcommand*{\Shlit}{\mathrm{Sh}}
\newcommand*{\placisom}{\approx}
\newcommand*{\hypoisom}{\sim}
\newcommand*{\shapisom}{\sim_{\shapelit}}
\begin{document}

\title[Crystals and trees]{Crystals and trees: quasi-Kashiwara operators, monoids of binary trees, and
  {R}obinson--{S}chensted-type correspondences}

\author{Alan J. Cain}
\address{%
Centro de Matem\'{a}tica e Aplica\c{c}\~{o}es\\
Faculdade de Ci\^{e}ncias e Tecnologia\\
Universidade Nova de Lisboa\\
2829--516 Caparica\\
Portugal
}
\email{%
a.cain@fct.unl.pt
}
\thanks{The first author was supported by an Investigador {\sc FCT} fellowship ({\sc IF}/01622/2013/{\sc CP}1161/{\sc
    CT}0001).}

\author{Ant\'onio Malheiro}
\address{%
Centro de Matem\'{a}tica e Aplica\c{c}\~{o}es and Departamento de Matem\'{a}tica\\
Faculdade de Ci\^{e}ncias e Tecnologia\\
Universidade Nova de Lisboa\\
2829--516 Caparica\\
Portugal
}
\email{%
ajm@fct.unl.pt
}
\thanks{For both authors, this work was partially supported by by the Funda\c{c}\~{a}o para
a Ci\^{e}ncia e a Tecnologia (Portuguese Foundation for Science and Technology) through the project {\sc UID}/{\sc
  MAT}/00297/2013 (Centro de Matem\'{a}tica e Aplica\c{c}\~{o}es), and the project {\scshape PTDC}/{\scshape MHC-FIL}/2583/2014.}

\thanks{The authors thank the anonymous referee for carefully reading the paper and for valuable comments, and, in
  particular, for making the observation contained in \fullref{Remark}{rem:slncrystals}.}

\begin{abstract}
  Kashiwara's crystal graphs have a natural monoid structure that arises by identifying words labelling vertices that
  appear in the same position of isomorphic components. The celebrated plactic monoid (the monoid of Young tableaux),
  arises in this way from the crystal graph for the $q$-analogue of the general linear Lie algebra $\mathfrak{gl}_{n}$, and
  the so-called Kashiwara operators interact beautifully with the combinatorics of Young tableaux and with the
  Robinson--Schensted--Knuth correspondence. The authors previously constructed an analogous `quasi-crystal' structure
  for the related hypoplactic monoid (the monoid of quasi-ribbon tableaux), which has similarly neat combinatorial
  properties. This paper constructs an analogous `crystal-type' structure for the sylvester and Baxter monoids (the
  monoids of binary search trees and pairs of twin binary search trees, respectively). Both monoids are shown to arise
  from this structure just as the plactic monoid does from the usual crystal graph. The interaction of the structure
  with the sylvester and Baxter versions of the Robinson--Schensted--Knuth correspondence is studied. The structure is
  then applied to prove results on the number of factorizations of elements of these monoids, and to prove that both
  monoids satisfy non-trivial identities.
\end{abstract}

\maketitle

\section{Introduction}

Kashiwara's notion of a crystal basis \cite{kashiwara_crystalizing,kashiwara_oncrystalbases} (informally, a basis of a
representation for a suitable algebra on which the generators have a particularly straightforward action), gives rise,
via tensor products, to the crystal graph. This graph has a natural monoid structure where the vertices are viewed as
words in the free monoid, and words are identified when they lie in the same places in isomorphic components of this
graph. In the case of the $q$-analogue of the general linear Lie algebra $\mathfrak{gl}_n$, the monoid obtained is the
celebrated plactic monoid, whose elements can be identified with Young tableaux and which appears in such diverse
contexts as symmetric functions \cite{macdonald_symmetric}, representation theory \cite{green_polynomial}, algebraic
combinatorics \cite{lothaire_algebraic}, algebraic geometry \cite{fulton_young}, Kostka--Foulkes polynomials
\cite{lascoux_plaxique,lascoux_foulkes}, Schubert polynomials \cite{lascoux_schubert,lascoux_tableaux}, and musical
theory \cite{jedrzejewski_plactic}. (Indeed, since Lascoux and Sch\"utzenberger's \cite{lascoux_plaxique} seminal study,
so many connections have emerged that Sch\"utzen\-berger proclaimed it `one of the most fundamental monoids in algebra'
\cite{schutzenberger_pour}.) The beautiful interaction of the crystal graph and the associated Kashiwara operators with
the combinatorics of Young tableaux and the Robinson--Schensted correspondence is such an important and powerful
combinatorial tool that in this context the Kashiwara operators and crystal graph are sometimes simply referred to as
the `coplactic' operators and `coplactic' graph (see, for instance, \cite[\S~5.5]{lothaire_algebraic} and
\cite{vanleeuwen_littlewood}).

In a previous paper \cite{cm_hypoplactic}, the present authors constructed an analogue of this crystal structure for the
monoid of quasi-ribbon tableaux: the so-called hypoplactic monoid. The hypoplactic monoid emerged from the theory of
non-commutative symmetric functions and quasi-symmetric functions
\cite{krob_noncommutative4,krob_noncommutative5,novelli_hypoplactic}. Quasi-ribbon functions form a basis for the ring
of quasi-symmetric functions, just as the Schur polynomials form a basis for the ring of symmetric polynomials. The
quasi-ribbon functions are indexed by the so-called quasi-ribbon tableaux, which can be identified with the elements of
the hypoplactic monoid. As shown in the authors' previous paper, much of the elegant interaction of the crystal graph,
Kashiwara operators, Young tableaux, and the plactic monoid is echoed in the interaction of the analoguous quasi-crystal
graph, quasi-Kashiwara operators, quasi-ribbon tableaux, and the hypoplactic monoid.

Thus both plactic and hypoplactic monoids can be defined by factoring the free monoid by a relation that can be defined
in three equivalent ways:
\begin{itemize}
\item[1.] \textit{Defining relations}: the relation is the smallest congruence containing a given set of defining relations.
\item[2.] \textit{Insertion}: the relation is the kernel of the map that takes a word to a `tableau-type' object computed by
   an insertion algorithm.
\item[3.] \textit{Crystals}: the relation arises from isomophisms of connected components of a `crystal-type' graph.
\end{itemize}

The main aim of this paper is to develop an analoguous of the crystal-type structure for the sylvester
\cite{hivert_sylvester} and Baxter \cite{giraudo_baxter2} monoids, whose elements are respectively right strict binary
search trees and pairs of twin binary search trees. To be more precise: the existing literature contains equivalent
definitions for these monoid using defining relations and insertions algorithms (approaches~1 and~2 above). This paper
develops and applies the crystalline definition (approach~3 above).

In fact, the crystal-type graph for the sylvester and Baxter monoids are the same as for the hypoplactic monoid: it is
the notion of isomorphism that has to be modified. This paper develops an abstract framework for this modified notion of
isomorphism (\fullref{Section}{sec:abstractshapes}), and then applies it to the sylvester and Baxter monoids
(\fullref{Sections}{sec:sylvcrystal} and \ref{sec:baxtcrystal}).

This framework is developed from a slightly different perspective from the authors' earlier paper on the hypoplactic
monoid. Essentially, the difference is that rather than having finitely many Kashiwara and `quasi-Kashiwara' operators,
countably many are allowed, and the resulting monoids have infinite rank rather than finite rank. This is a step away
from the representation-theoretic origin of crystal bases, but is actually a much more natural context for a
combinatorially-focussed study. More importantly, it seems to be the correct starting-point to crystallize other monoids
connected with combinatorics, such as sylvester and Baxter monoids. In order to discuss the advantages of these modified
graphs and to clarify what properties a `crystal structure' for the sylvester and Baxter monoids should enjoy,
\fullref{Section}{sec:crystals} briefly recapitulates some of the existing theory. Although some of this material is in existing
literature, the important differences between the finite- and infinite-rank cases are often glossed over.

The resulting crystal-type structure for the sylvester and Baxter monoids turns out to interact neatly with the
sylvester and Baxter versions of the Robinson--Schensted correspondence, and yield new results.

\section{Preliminaries and notation}
\label{sec:preliminaries}

Much of the notation used in this paper is drawn from \cite{cm_hypoplactic}, to which this paper is a sequel. For
definitions and notation for alphabets, words, presentations, standard words, standardization, partitions, compositions,
and weight see \cite[\S~2]{cm_hypoplactic}. For Young tableaux, the Schensted insertion algorithm, and the definition of
the plactic monoid $\plac$ and rank-$n$ plactic monoid $\plac_n$ using these, see \cite[\S~3.1]{cm_hypoplactic};
note however that the present paper uses $\pplac{u}$ and $\qplac{u}$ respectively for the Young tableau and recording tableau
computed from $u \in \aA^*$ and calls these the plactic $\plit$- and $\qlit$-symbols of $u$. Similarly $\phypo{u}$ and
$\qhypo{u}$ are respectively the quasi-ribbon tableau and recording ribbon computed from $u \in \aA^*$ and calls these
the hypoplactic $\plit$- and $\qlit$-symbols of $u$. This is to keep notation uniform with the analogous notions for
the sylvester and Baxter monoids.

Note that the defining relations $\drel{R}_\plac$ given in \cite[\S~1]{cm_hypoplactic} (known as the Knuth relations)
are the reverse of the ones given in \cite{cgm_crystal}. This is because, in the context of crystal bases, the
convention for tensor products gives rise to a `plactic monoid' that is actually anti-isomorphic to the usual notion of
plactic monoid. Since \cite{cm_hypoplactic} and the present paper are mainly concerned with combinatorics, rather than
representation theory, they follow Shimozono \cite{shimozono_crystals} in using the convention that is compatible with
the usual notions of Young tableaux and the Robinston--Schensted correspondence.

For quasi-ribbon tableaux and the Schensted insertion algorithm, and the definition of the hypoplactic monoid $\hypo$
and rank-$n$ hypoplactic monoid $\hypo_n$ using these, see \cite[\S~4]{cm_hypoplactic}.

\section{Infinite-rank crystals and quasi-crystals}
\label{sec:crystals}

This section recalls the definitions of the Kashiwara and quasi-Kashiwara operators, defines the infinite-rank crystal
and quasi-crystal graphs, and shows that isomorphisms between components of these graphs give rise, respectively to the
plactic and hypoplactic monoids. Many of the proofs in this section are not given in full, since they are
straightforward modifications of the proofs for finite-rank crystal and quasi-crystal graphs given in
\cite[\S\S~3~and~5]{cm_hypoplactic}.

\subsection{Crystal graph and the plactic monoid}

Let $i \in \nset$ and define the \defterm{Kashiwara operators} $\ke_i$ and $\kf_i$ on $\aA$ as follows:
\begin{align*}
\ke_i(i+1) &= i, &&\text{$\ke_i(x)$ is undefined for $x \neq i+1$;} \\
\kf_i(i)&=i+1, &&\text{$\kf_i(x)$ is undefined for $x \neq i$.}
\end{align*}
The definition is extended to $\aA^* \setminus \aA$ by the recursion
\begin{align*}
  \ke_i(uv) &= \begin{cases}
    \ke_i(u)\,v & \text{if $\kecount_i(u) > \kfcount_i(v)$;} \\
    u\, \ke_i(v) & \text{if $\kecount_i(u) \leq \kfcount_i(v)$,}
  \end{cases}\displaybreak[0]\\
  \kf_i(uv) &= \begin{cases}
    \kf_i(u)\,v & \text{if $\kecount_i(u) \geq \kfcount_i(v)$;} \\
    u\,\kf_i(v) & \text{if $\kecount_i(u) < \kfcount_i(v)$,}
  \end{cases}
\end{align*}
where $\kecount_i$ and $\kfcount_i$ are auxiliary maps defined by
\begin{align*}
  \kecount_i(w) & = \max\gset[\big]{k \in \nset\cup\set{0}}{\text{$\underbrace{\ke_i\cdots\ke_i}_{\text{$k$ times}}(w)$ is defined}}; \\
  \kfcount_i(w) & = \max\gset[\big]{k \in \nset\cup\set{0}}{\text{$\underbrace{\kf_i\cdots\kf_i}_{\text{$k$ times}}(w)$ is defined}}.
\end{align*}
(Note that this definition is in a sense the mirror image of \cite[Theorem~1.14]{kashiwara_classical}, because of the
choice of definition for readings of tableaux used in this paper. Thus the definition of $\ke_i$ and $\kf_i$ is the same
as in \cite{cm_hypoplactic} and \cite[p.~8]{shimozono_crystals}.)

The operators $\ke_i$ and $\kf_i$ respectively increase and decrease weight whenever they are defined, in the sense that
if $\ke_i(u)$ is defined, then $\wt{\ke_i(u)} > \wt{u}$, and if $\kf_i(u)$ is defined, then $\wt{\kf_i(u)} <
\wt{u}$.
Thus $\ke_i$ and $\kf_i$ are respectively called the Kashiwara \defterm{raising} and \defterm{lowering}
operators. Furthermore, a word on which none of the $\ke_i$ is defined is said to be \defterm{highest-weight}.

The \defterm{crystal graph} for $\plac$, denoted $\Gamma(\plac)$, is the directed labelled graph with vertex set $\aA^*$
and, for $u,v \in \aA^*$, an edge from $u$ to $v$ labelled by $i$ if and only if $v = \kf_i(u)$ (or, equivalently,
$u = \ke_i(v)$). \fullref{Figure}{fig:gammaplac2111} shows part of the crystal graph $\Gamma(\plac)$. For any
$w \in \aA^*$, let $\Gamma(\plac,w)$ denote the connected component of $\Gamma(\plac)$ that contains the vertex $w$. For
$n \in \nset$, the crystal graph for $\plac_n$, denoted $\Gamma(\plac_n)$, is the subgraph of $\Gamma(\plac)$ induced by
$\aA_n^*$. Notice that edge labels in $\Gamma(\plac_n)$ must lie in $\set{1,\ldots,n-1}$, since for $i \geq n$, if
$\kf_i(u)$ is defined, then at least one of $u$ and $\kf_i(u)$ does not lie in $\aA_n^*$.

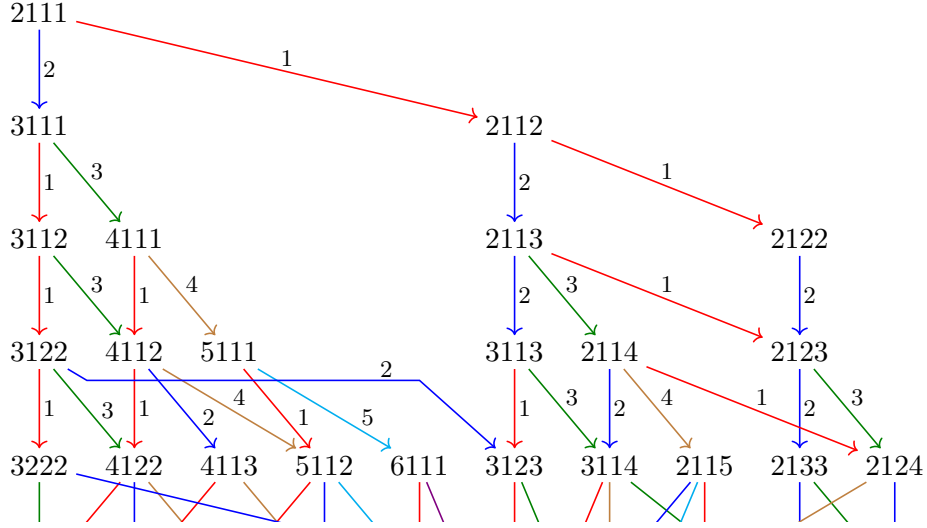
\begin{figure}[t]
  \centering
  \begin{tikzpicture}[bigcrystal,x=12.5mm,y=15mm,labelledcolouredcrystaledges]
    \begin{scope}
      \node (2111) at (0,-0) {$2111$};
      \node (2112) at (5,-1) {$2112$};
      \node (3111) at (0,-1) {$3111$};
      \node (2113) at (5,-2) {$2113$};
      \node (2122) at (8,-2) {$2122$};
      \node (3112) at (0,-2) {$3112$};
      \node (4111) at (1,-2) {$4111$};
      \node (2114) at (6,-3) {$2114$};
      \node (2123) at (8,-3) {$2123$};
      \node (3113) at (5,-3) {$3113$};
      \node (3122) at (0,-3) {$3122$};
      \node (4112) at (1,-3) {$4112$};
      \node (5111) at (2,-3) {$5111$};
      \node (2115) at (7,-4) {$2115$};
      \node (2124) at (9,-4) {$2124$};
      \node (2133) at (8,-4) {$2133$};
      \node (3114) at (6,-4) {$3114$};
      \node (3123) at (5,-4) {$3123$};
      \node (3222) at (0,-4) {$3222$};
      \node (4113) at (2,-4) {$4113$};
      \node (4122) at (1,-4) {$4122$};
      \node (5112) at (3,-4) {$5112$};
      \node (6111) at (4,-4) {$6111$};
      \coordinate (4222) at (0,-5);
      \coordinate (4123) at (1,-5);
      \coordinate (5122) at (2,-5);
      \coordinate (5113) at (3,-5);
      \coordinate (6112) at (4,-5);
      \coordinate (7111) at (4.5,-5);
      \coordinate (3223) at (5,-5);
      \coordinate (3124) at (5.5,-5);
      \coordinate (3115) at (6,-5);
      \coordinate (2116) at (6.5,-5);
      \coordinate (2125) at (7,-5);
      \coordinate (4114) at (7.5,-5);
      \coordinate (3133) at (8,-5);
      \coordinate (2134) at (9,-5);
      \draw[f1] (3113) to (3123);
      \draw[f3,pos=.66] (3122) to (4122);
      \draw[f1] (4112) to (4122);
      \draw[f2] (2114) to (3114);
      \draw[f3] (3113) to (3114);
      \draw[f1,pos=.75] (5111) to (5112);
      \draw[f4] (4112) to (5112);
      \draw[f1] (3112) to (3122);
      \draw[f4] (2114) to (2115);
      \draw[f3] (2113) to (2114);
      \draw[f1] (3111) to (3112);
      \draw[f3] (2123) to (2124);
      \draw[f2] (2123) to (2133);
      \draw[f2] (2112) to (2113);
      \draw[f2,pos=.75] (4112) to (4113);
      \draw[f2] (2122) to (2123);
      \draw[f2] (2113) to (3113);
      \draw[f5,pos=.75] (5111) to (6111);
      \draw[f3] (3112) to (4112);
      \draw[f3] (3111) to (4111);
      \draw[f4] (4111) to (5111);
      \draw[f2] (2111) to (3111);
      \draw[f1] (4111) to (4112);
      \draw[f1] (3122) to (3222);
      %
      \draw[f2,pos=.9] (3122) -- ($ (3122) + (.5,-.25) $) to ($ (3123) + (-1,.75) $) -- (3123);
      \draw[f1] (2114) to (2124);
      \draw[f1] (2111) to (2112);
      \draw[f1] (2112) to (2122);
      \draw[f1] (2113) to (2123);
    \end{scope}
    \begin{scope}[colouredcrystaledges]
      \draw[f3] (3222) to (4222);
      \draw[f4] (4122) to (5122);
      \draw[f1] (4122) to (4222);
      \draw[f2] (4122) to (4123);
      \draw[f1] (4113) to (4123);
      \draw[f4] (4113) to (5113);
      \draw[f1] (5112) to (5122);
      \draw[f5] (5112) to (6112);
      \draw[f2] (5112) to (5113);
      \draw[f1] (6111) to (6112);
      \draw[f6] (6111) to (7111);
      \draw[f3] (3123) to (3124);
      \draw[f1] (3123) to (3223);
      \draw[f4] (3114) to (3115);
      \draw[f3] (3114) to (4114);
      \draw[f1] (3114) to (3124);
      \draw[f5] (2115) to (2116);
      \draw[f2] (2115) to (3115);
      \draw[f2] (2133) to (3133);
      \draw[f3] (2133) to (2134);
      \draw[f4] (2124) to (2125);
      \draw[f2] (2124) to (2134);
      %
      \draw[f1] (2115) to (2125);
      \draw[f2] (3222) to (3223);
    \end{scope}
    \fill[color=white,path fading=fade down] (-.4,-4.5) rectangle (10.4,-5);
    \fill[color=white] (-.4,-4.99) rectangle (10.4,-5.1);
  \end{tikzpicture}
  \caption{Part of the connected component $\Gamma(\plac,2111)$.}
  \label{fig:gammaplac2111}
\end{figure}

Define a relation $\placisom$ on $\aA^*$ as follows: $u \placisom v$ if and only if there is a labelled digraph isomorphism
$\theta : \Gamma(\plac,u) \to \Gamma(\plac,v)$ with $\theta(u) = v$. That is, $u \placisom v$ if $u$ and $v$ lie in corresponding
places in isomorphic components of $\Gamma(\plac)$.

It is worth emphasizing that the definition of $\placisom$ is the crucial point of divergence from the standard
approach to crystals: normally, as in \cite[\S~2.4]{cgm_crystal}, $\placisom$ is defined in terms of
\emph{weight-preserving} isomorphisms.

\begin{proposition}[{\cite[Proposition~2.6]{cgm_crystal}}]
The relation $\placisom$ is a congruence on $\aA^*$.
\end{proposition}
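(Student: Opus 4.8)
The plan is to check the two things a congruence requires: that $\placisom$ is an equivalence relation, and that it is compatible with concatenation. The first part is purely formal and I would dispatch it quickly: the identity map on $\Gamma(\plac,u)$ witnesses $u\placisom u$; the inverse of a witnessing isomorphism witnesses the reversed relation; and composition of witnessing isomorphisms witnesses transitivity. One observation worth recording here, because it is needed later, is that $\ke_i$ and $\kf_i$ preserve word length, so every vertex of a given connected component of $\Gamma(\plac)$ has the same length; in particular $u\placisom v$ forces $\wlen{u}=\wlen{v}$.

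For compatibility, I would first note that, $\placisom$ being an equivalence relation, it is enough to show that every left and every right translation of $\aA^*$ respects $\placisom$: if $u\placisom v$ and $w\in\aA^*$, then $uw\placisom vw$ and $wu\placisom wv$. I would treat the right-translation case, the left one being symmetric, and may assume $u,v,w$ non-empty (the empty-word cases being trivial). The engine is the ``tensor-product'' character of the recursion defining $\ke_i$ and $\kf_i$: restricted to words of a fixed length, $\Gamma(\plac)$ is a tensor power of the single-letter crystal $\aA$, and the decomposition of $\aA^*$ by length is compatible with concatenation. Concretely, fix a labelled-digraph isomorphism $\theta:\Gamma(\plac,u)\to\Gamma(\plac,v)$ with $\theta(u)=v$. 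First I would prove, by induction on the distance from $uw$ in the connected graph $\Gamma(\plac,uw)$, that every vertex $x$ of $\Gamma(\plac,uw)$ factors (uniquely, by the length remark) as $x=x'x''$ with $\wlen{x'}=\wlen{u}$, and moreover $x'\in\Gamma(\plac,u)$ and $x''\in\Gamma(\plac,w)$ — the inductive step just unwinds the recursion for $\ke_i(x'x'')$ and $\kf_i(x'x'')$, which modifies exactly one of the two factors by a single $\ke_i$ or $\kf_i$, and uses that connected components are closed under these operators. Then $\Theta:\Gamma(\plac,uw)\to\aA^*$, $\Theta(x'x'')=\theta(x')\,x''$, is well defined, sends $uw$ to $vw$, is a labelled-digraph morphism whose image therefore lies in $\Gamma(\plac,vw)$, and running the same construction with $\theta^{-1}$ produces a two-sided inverse; hence $\Theta$ is an isomorphism and $uw\placisom vw$.

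The one genuine computation — the place where something has to be verified rather than merely recalled — is that $\Theta$ commutes with every $\ke_i$ and $\kf_i$. This is a short case analysis according to which of the two factors the recursion acts on, and the choice of case is governed by comparing $\kecount_i$ of the first factor with $\kfcount_i$ of the second; the argument works precisely because $\theta$, being a \emph{labelled} digraph isomorphism, intertwines the Kashiwara operators and hence leaves $\kecount_i$ and $\kfcount_i$ unchanged, so replacing $x'$ by $\theta(x')$ does not alter which case applies. This is also where the proof diverges, harmlessly, from the finite-rank one cited as \cite[Proposition~2.6]{cgm_crystal}: allowing countably many operators introduces no new difficulty (all the relevant inductions are on finite path lengths inside possibly infinite components), and since the argument never uses weight information it is equally valid for the coarser, non-weight-preserving notion of isomorphism used here. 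In short, I expect this to be a routine adaptation of the corresponding argument in \cite{cm_hypoplactic}.
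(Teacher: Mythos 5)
Your argument is correct and is in substance the argument the paper relies on: the paper gives no proof body, simply citing \cite[Proposition~2.6]{cgm_crystal} and noting that one replaces $\aA_n$ by $\aA$ and ignores weight, and what you have written out is exactly that standard tensor-product (signature-rule) argument, showing that a component isomorphism $\theta$ induces $\theta\otimes\mathrm{id}$ on the concatenated component because $\theta$ preserves $\kecount_i$ and $\kfcount_i$ and hence the case selection in the recursion for $\ke_i$ and $\kf_i$. The only quibble is your parenthetical claim that $u\placisom v$ forces $\wlen{u}=\wlen{v}$: this does not follow merely from lengths being constant on each component (an abstract labelled-digraph isomorphism need not preserve the lengths of the words labelling vertices; that it does is really a consequence of the later Proposition~\ref{prop:placcongsimplac}), but the claim is never actually used in your construction, since $\theta(x')$ automatically has length $\wlen{v}$, so the slip is immaterial.
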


(Strictly speaking, the preceding result follows by modifying the proof of \cite[Proposition~2.6]{cgm_crystal} by
replacing $\aA_n$ with $\aA$ and ignoring the discussion of weight.)

\begin{theorem}[{\cite[Theorem~5.5.1]{lothaire_algebraic}}]
  \label{thm:kekfcompatible}
  Let $u,v \in \aA^*$.
  \begin{enumerate}
  \item $\qplac{u} = \qplac{\ke_i(u)}$ if $\ke_i(u)$ is defined and similarly for $\kf_i$.
  \item Suppose $u \placcong v$. Then $\ke_i(u)$ is defined if and only if $\ke_i(v)$ is defined, in which case
    $\ke_i(u) \placcong \ke_i(v)$ and similarly for $\kf_i$.
  \end{enumerate}
\end{theorem}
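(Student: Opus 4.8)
The plan is to deduce both parts from the bracketing (signature) reformulation of the Kashiwara operators together with the fact that $\placcong$ is the congruence on $\aA^*$ generated by the Knuth relations $\drel{R}_\plac$; this is essentially the argument of \cite[Theorem~5.5.1]{lothaire_algebraic}, and the only thing one really needs to notice is that nothing in it uses an upper bound on the alphabet. The bracketing reformulation is as follows: fixing $i$, delete from a word $w$ every letter other than $i$ and $i+1$ and then repeatedly delete any factor consisting of an $i+1$ immediately followed by an $i$. This rewriting terminates (lengths decrease) and is confluent (it is the standard bracket‑matching reduction), producing a word $i^{a}(i+1)^{b}$; comparing with the recursion in the text by induction on $\abs{w}$ shows that $\kfcount_i(w)=a$, that $\kecount_i(w)=b$, that $\ke_i$ replaces the leftmost surviving $i+1$ by $i$, and that $\kf_i$ replaces the rightmost surviving $i$ by $i+1$. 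In particular $\ke_i$ and $\kf_i$ change exactly one letter where defined, and are mutually inverse there, so that $\kf_i(\ke_i(w))=w$ when $\ke_i(w)$ is defined, and symmetrically.

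For part~(2): since $\placcong$ is generated as a congruence by $\drel{R}_\plac$, it suffices to handle the case in which $v$ is obtained from $u$ by rewriting a single length‑three factor by one Knuth relation, and then induct on the number of such rewrites joining $u$ to $v$. A rewrite can affect the $i$‑bracketing of a word only when the rewritten factor contains a letter equal to $i$ or $i+1$, so one runs through the finitely many such configurations and checks, in each, that (i)~the reduced form of the factor is unchanged --- hence, by confluence of the cancellation, so are $\kecount_i$ and $\kfcount_i$ of the whole word, so $\ke_i(u)$ is defined if and only if $\ke_i(v)$ is, and likewise for $\kf_i$; and (ii)~the single letter altered by $\ke_i$ (resp.\ $\kf_i$) sits in corresponding positions in $u$ and in $v$, so that $\ke_i(u)$ and $\ke_i(v)$ again differ by a single Knuth rewrite, or coincide. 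This proves part~(2). It also shows that $\ke_i$ and $\kf_i$ induce well‑defined operators on Young tableaux, with $\P{\ke_i(u)}$ obtained from $\P{u}$ by changing a single entry from $i+1$ to $i$ --- the entry prescribed by the bracketing rule applied to any reading of $\P{u}$ --- and one checks that this leaves a semistandard tableau of the \emph{same shape}. Hence $\Sh{\P{\ke_i(u)}}=\Sh{\P{u}}$ whenever $\ke_i(u)$ is defined.

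For part~(1), by the mutual‑inverse property it is enough to treat $\ke_i$, since the $\kf_i$ statement for $u$ is the $\ke_i$ statement applied to $\kf_i(u)$. I would argue by induction on $\abs{u}$; the case $\abs{u}\le 1$ is immediate, a one‑letter word having recording tableau the single cell labelled $1$. For the inductive step, write $u=u'x$ with $x\in\aA$ and apply the recursion for $\ke_i$. If $\ke_i(u)=\ke_i(u')\,x$ then $\ke_i(u')$ is defined and, by induction, $\qplac{u'}=\qplac{\ke_i(u')}$; if instead $\ke_i(u)=u'\,\ke_i(x)$ then necessarily $x=i+1$ and $\ke_i(u)=u'\,i$, and one compares directly against $\qplac{u'}$. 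In either subcase $\qplac{u}$ and $\qplac{\ke_i(u)}$ are each obtained from one and the same standard tableau --- $\qplac{u'}$, which equals $\qplac{\ke_i(u')}$ --- by adjoining a single new cell, namely the cell created when the last letter of $u$ (resp.\ of $\ke_i(u)$) is inserted. Both resulting tableaux have the same shape, since $\Sh{\P{u}}=\Sh{\P{\ke_i(u)}}$ by the shape‑preservation established in part~(2) (and likewise $\Sh{\P{u'}}=\Sh{\P{\ke_i(u')}}$), and this common shape has exactly one cell more than $\Sh{\P{u'}}$. Hence the adjoined cell is forced to be the same in both cases, so $\qplac{u}=\qplac{\ke_i(u)}$, completing the induction.

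The genuine content lies in two elementary but fiddly verifications: the case‑by‑case check that the Knuth rewrites commute with the bracketing in part~(2), and the claim (used for part~(1)) that the bracketing rule, applied to a reading of a semistandard tableau, recolours a cell without altering the shape. Both are carried out in \cite[\S~5.5]{lothaire_algebraic} for finite rank, and, since the bracketing rule and Schensted insertion only ever compare the finitely many letters actually occurring in the words at hand, the arguments go through unchanged over the infinite alphabet $\aA$. I expect the second of these --- shape‑preservation of $\ke_i$ on the $\plit$‑symbol --- to be the subtler point, since it is exactly what forces the recording tableau to be left untouched.
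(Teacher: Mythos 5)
Your outline is correct and matches what the paper actually relies on: the paper gives no proof of its own for this statement, simply citing \cite[Theorem~5.5.1]{lothaire_algebraic} and remarking that the infinite-rank version follows by choosing $n$ larger than every symbol occurring in $u$ and $v$ --- which is precisely your closing observation that the bracketing rule and Schensted insertion only ever involve the finitely many letters actually present. The two ``fiddly verifications'' you defer (compatibility of the bracketing with the Knuth rewrites, and shape-preservation of $\ke_i$ on tableau readings) are exactly the content of the cited finite-rank proof, so your reconstruction is faithful to the argument the paper invokes and nothing is missing relative to the paper's own treatment.
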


(To be strict, \cite[Theorem~5.5.1]{lothaire_algebraic} is concerned only with words in $\aA_n^*$ and
$i \in \set{1,\ldots,n-1}$. To obtain the result as stated here, one simply chooses $n$ to be bigger than the maximum
symbol that appears in $u$ or $v$.)

\begin{proposition}
  \label{prop:placcongsimplac}
  The relations $\placcong$ and $\placisom$ coincide.
\end{proposition}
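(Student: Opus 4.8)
The plan is to prove the two inclusions $\placcong\subseteq\placisom$ and $\placisom\subseteq\placcong$ separately, in each case reducing matters to \fullref{Theorem}{thm:kekfcompatible} together with two standard features of the Robinson--Schensted correspondence: the injectivity of $w\mapsto\parens{\pplac{w},\qplac{w}}$, and the fact that $\Sh{\pplac{w}}=\Sh{\qplac{w}}$. Since $\qplac{}$ is constant on each component of $\Gamma(\plac)$ by \fullref{Theorem}{thm:kekfcompatible}\,(1), the latter shows that the shape of $\pplac{}$ is also constant on each component. As usual, everything transfers from the finite-rank treatment of \cite[\S\S~3~and~5]{cm_hypoplactic} by restricting to a subalphabet $\aA_n$ large enough to contain every letter occurring in the words under consideration.

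For $\placcong\subseteq\placisom$, suppose $u\placcong v$. I would define $\theta\colon\Gamma(\plac,u)\to\Gamma(\plac,v)$ as follows: for a vertex $w$, choose a composite $\gamma$ of Kashiwara operators with $\gamma(u)=w$ and set $\theta(w)=\gamma(v)$. By \fullref{Theorem}{thm:kekfcompatible}\,(2) the word $\gamma(v)$ is then defined and $\gamma(v)\placcong\gamma(u)=w$. This is independent of the choice of $\gamma$: any two candidate values lie in $\Gamma(\plac,v)$, hence have the same $\qlit$-symbol by \fullref{Theorem}{thm:kekfcompatible}\,(1), and being $\placcong$-related they have the same $\plit$-symbol, so by injectivity of $w\mapsto\parens{\pplac{w},\qplac{w}}$ they coincide. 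The symmetric construction yields a two-sided inverse, $\theta$ commutes with every edge map by construction and \fullref{Theorem}{thm:kekfcompatible}\,(2), and $\theta(u)=v$; hence $u\placisom v$. (Alternatively, since the preceding proposition shows $\placisom$ is a congruence, it would suffice to verify $\ell\placisom r$ for the two defining relations of $\plac$ by inspecting the relevant small components, but the argument above is more uniform.)

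For $\placisom\subseteq\placcong$, suppose $\theta\colon\Gamma(\plac,u)\to\Gamma(\plac,v)$ is a labelled-digraph isomorphism with $\theta(u)=v$. Each connected component of $\Gamma(\plac)$ has a unique highest-weight vertex, namely its unique source; write $u_0$ for that of $\Gamma(\plac,u)$ and put $v_0=\theta(u_0)$, the source of $\Gamma(\plac,v)$. By connectedness there is a composite $\sigma$ of Kashiwara operators with $\sigma(u_0)=u$, and since $\theta$ intertwines all the operators, $\sigma(v_0)=v$; so by \fullref{Theorem}{thm:kekfcompatible}\,(2) it suffices to show $u_0\placcong v_0$, i.e. $\pplac{u_0}=\pplac{v_0}$. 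As $u_0$ and $v_0$ are highest-weight, $\pplac{u_0}$ and $\pplac{v_0}$ are the tableaux $U_\lambda$ and $U_\mu$ whose $k$th row consists of $k$'s only, where $\lambda=\Sh{\pplac{u_0}}$ and $\mu=\Sh{\pplac{v_0}}$; hence it is enough to prove $\lambda=\mu$. For each $i$, the quantity $\lambda_i-\lambda_{i+1}$ equals the length of the longest $i$-labelled directed path out of $u_0$ — a labelled-digraph invariant, carried by $\theta$ to the corresponding quantity $\mu_i-\mu_{i+1}$ at $v_0$ — and since $\lambda$ and $\mu$ are partitions, each is recovered from its finitely many nonzero successive differences via $\lambda_i=\sum_{j\ge i}(\lambda_j-\lambda_{j+1})$. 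Therefore $\lambda=\mu$.

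I expect the last step to be the crux, and it is the only genuinely new ingredient relative to the finite-rank case: passing from ``the two components are isomorphic'' to ``their underlying partitions agree''. This is exactly what fails in finite rank, where $B(\lambda)$ and $B(\mu)$ are already isomorphic as labelled digraphs whenever $\lambda-\mu$ is a constant vector, so that one must there restrict to weight-preserving isomorphisms; in the infinite-rank setting a constant difference of two partitions is forced to be zero, and no such restriction is needed. The auxiliary facts used above — uniqueness of the highest-weight vertex in a component, connectedness of components, and the identification of highest-weight words with readings of the tableaux $U_\lambda$ — are routine adaptations of the corresponding statements in \cite[\S\S~3~and~5]{cm_hypoplactic}.
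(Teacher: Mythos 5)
Your proof is correct and follows essentially the same strategy as the paper: the forward inclusion by iterating \fullref[(2)]{Theorem}{thm:kekfcompatible}, and the converse by passing to the highest-weight vertices and showing that their common weight is an invariant of the abstract labelled digraph, so that the two Yamanouchi words have equal $\plit$-symbols. The only (harmless) variation is in how the weight is read off the component: the paper recovers $\abs{u'}_n, \abs{u'}_{n-1},\ldots$ successively via $\kfcount_i$ evaluated at intermediate vertices $\kf_{i+1}^{\abs{u'}_{i+1}}\cdots\kf_n^{\abs{u'}_n}(u')$, whereas you read off all the differences $\lambda_i-\lambda_{i+1}=\kfcount_i(u_0)$ at the source and sum them, which is equivalent.
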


\begin{proof}
  Let $u \in \aA^*$ and $i \in \nset$. By iterated application of \fullref[(2)]{Theorem}{thm:kekfcompatible}, if
  $u,v \in \aA^*$ are such that $u \placcong v$, then there is a labelled digraph isomorphism
  $\theta : \Gamma(\plac,u) \to \Gamma(\plac,v)$ with $\theta(u) = v$.

  For the converse, let $u,v \in \aA^*$ and suppose there is a labelled digraph isomorphism
  $\theta : \Gamma(\plac,u) \to \Gamma(\plac,v)$ with $\theta(u) = v$. If $\ke_i(u)$ is defined for some $i \in \nset$,
  so is $\ke_i(v)$ (since $\theta$ is a labelled digraph isomorphism); thus one can replace $u$ and $v$ with $\ke_i(u)$
  and $\ke_i(v)$. Eventually one obtains highest weight words $u'$ and $v'$. By the proof of
  \cite[Proposition~5.5.2]{lothaire_algebraic}, $u'$ and $v'$ are Yamanouchi words.

  Suppose $n$ is the largest symbol appearing in $u'$. Then $|u'|_n$ is the length of the longest directed path in
  $\Gamma(\plac)$ made up of edges labelled by $n$ starting at $u'$, or equivalently, $\kfcount_n(u')$. Furthermore,
  $|u'|_{n-1} = |\kf_n^{|u'|_n}(u')|_{n-1}$ is equal to $\kfcount_{n-1}(\kf_n^{|u'|_n}(u'))$. Continuing in this way, one
  sees that for each $i \in \set{1,\ldots,n}$
  \[
  |u'|_i = |\kf_{i+1}^{|u'|_{i+1}}\cdots \kf_{n}^{|u'|_{n}}(u')|_i = \kfcount_i\parens[\big]{\kf_{i+1}^{|u'|_{i+1}}\cdots \kf_{n}^{|u'|_{n}}(u')}.
  \]
  Hence $\wt{u'}$ is determined by the connected component $\Gamma(\plac,u')$. Similarly, $\wt{v'}$ is determined by the
  connected component $\Gamma(\plac,v')$.

  Since the components $\Gamma(\plac,u')$ and $\Gamma(\plac,v')$ are isomorphic, $\wt{u'} = \wt{v'}$. Thus, since $u'$ and
  $v'$ are Yamanouchi words, $\pplac{u'}$ and $\pplac{v'}$ are both equal to the tableaux of shape $\wt{u'} = \wt{v'}$ containing
  only symbols $i$ on row $i$. Hence $u' \placcong v'$. By iterated application of
  \fullref[(2)]{Theorem}{thm:kekfcompatible}, $w \placcong \theta(w)$ for all $w \in \Gamma(\plac,u)$.
\end{proof}

In the finite rank case, the analogy of \fullref{Proposition}{prop:placcongsimplac} does not hold: for example, the
connected components $\Gamma(\plac_n,\emptyword)$ and $\Gamma(\plac_n,n(n-1)\cdots 21)$ both consist of single vertices
and thus are isomorphic as labelled digraphs. (Thus \cite[Proposition~5.5.2]{lothaire_algebraic} is technically false as
stated.) This is the crucial difference between the infinite-rank and finite-rank cases: In the infinite-rank case, all
necessary information about weights is contained in the abstract graph $\Gamma(\plac)$ and so it suffices to consider
labelled digraph isomorphisms between connected components. However, in the finite-rank case, the abstract graph
$\Gamma(\plac_n)$ does not contain sufficient information to define the plactic monoid $\plac_n$, and so it is necessary
to restrict to weight-preserving labelled digraph isomorphisms.

\begin{remark}
  \label{rem:slncrystals}
  In fact, the abstract graph $\Gamma(\plac_n)$ is the crystal graph for $\mathfrak{sl}_n$. This corresponds to the fact
  that quotienting by the full column $n(n-1)\cdots 21$ projects from isomorphism classes of $\mathfrak{gl}_n$ crystal
  graphs to isomorphism classes of $\mathfrak{sl}_n$ crystal graphs. The only extra information one requires to reverse
  this projection is how many full columns are present.
\end{remark}

\subsection{Quasi-crystal graph and the hypoplactic monoid}

Let $i \in \nset$. Define the \defterm{quasi-Kashiwara operators} $\e_i$ and $\f_i$ on $\aA^*$ as
follows: Let $u \in \aA^*$.
\begin{itemize}
\item If $u$ contains a subsequence $(i+1)i$, both $\e_i(u)$ and $\f_i(u)$ are undefined.
\item If $u$ does not contain a subsequence $(i+1)i$, but $u$ contains at least one symbol $i+1$, then $\e_i(u)$ is the
  word obtained from $u$ by replacing the left-most symbol $i+1$ by $i$; if $u$ contains no symbol $i+1$, then $\e_i(u)$
  is undefined.
\item If $u$ does not contain a subsequence $(i+1)i$, but $u$ contains at least one symbol $i$, then $\f_i(u)$ is the
  word obtained from $u$ by replacing the right-most symbol $i$ by $i+1$; if $u$ contains no symbol $i$, then $\f_i(u)$
  is undefined.
\end{itemize}
(As in \cite{cm_hypoplactic}, a subsequence may be made up of non-consecutive letters; thus the phrase `contains a
subsequence $(i+1)i$' is equivalent to `contains a symbol $i+1$ somewhere to the left of a symbol $i$'.) For example,
\begin{align*}
\e_2(3123) &\;\text{is undefined since $3123$ contains the subsequence $32$;} \\
\f_2(3131) &\;\text{is undefined since $3131$ does not contains a symbol $2$;} \\
\f_1(3113) &= 3123.
\end{align*}

\begin{lemma}[{\cite[Lemma~1]{cm_hypoplactic}}]
  \label{lem:efinverse}
  For all $i \in \nset$, the operators $\e_i$ and $\f_i$ are mutually inverse, in the sense that if
  $\e_i(u)$ is defined, $u = \f_i(\e_i(u))$, and if $\f_i(u)$ is defined, $u = \e_i(\f_i(u))$.
\end{lemma}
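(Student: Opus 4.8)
The plan is to verify the two claimed implications directly from the definition of the quasi-Kashiwara operators, which are given by explicit word operations, so no recursion (unlike the genuine Kashiwara operators) is involved. Suppose first that $\e_i(u)$ is defined. By definition this means that $u$ contains no subsequence $(i+1)i$ and contains at least one symbol $i+1$; write $u = u_1 (i+1) u_2$ where the displayed $i+1$ is the \emph{left-most} symbol $i+1$, so that $u_1$ contains no symbol $i+1$. Then $\e_i(u) = u_1 i u_2$. I would first check that $\f_i$ is defined on $u_1 i u_2$: it contains the symbol $i$ just displayed, and it contains no subsequence $(i+1)i$ — indeed $u_1$ has no $i+1$ (by left-most-ness), the newly inserted $i$ contributes no $i+1$, and $u_2$ inherits from $u$ the property of having no $i+1$ before any $i$, while also there is no $i+1$ in $u_1 i$ to the left of an $i$ in $u_2$. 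Next I would identify the right-most symbol $i$ in $u_1 i u_2$: since $u_2$ (as a suffix of $u$, which has no subsequence $(i+1)i$) actually contains no symbol $i$ at all once we know $u$ has an $i+1$ to its left — wait, more carefully: $u$ has no $(i+1)i$ subsequence and the displayed $i+1$ precedes all of $u_2$, so $u_2$ contains no symbol $i$. Hence the right-most $i$ in $u_1 i u_2$ is exactly the inserted one, and $\f_i(u_1 i u_2) = u_1 (i+1) u_2 = u$, as required.

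For the converse, suppose $\f_i(u)$ is defined, so $u$ has no subsequence $(i+1)i$ and has at least one symbol $i$; write $u = u_1 i u_2$ with the displayed $i$ the right-most symbol $i$, so $u_2$ contains no symbol $i$. Then $\f_i(u) = u_1 (i+1) u_2$. As before I would check $\e_i$ is defined here: there is a symbol $i+1$ (the inserted one), and there is no subsequence $(i+1)i$ — the only potential danger is the inserted $i+1$ sitting to the left of a symbol $i$ in $u_2$, but $u_2$ has no symbol $i$; the symbols of $u_1$ to the left of this $i+1$ cause no new $(i+1)i$ since that would already have been a $(i+1)i$ in $u$ relative to the original right-most $i$, contradicting that $u$ has none. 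Then the left-most $i+1$ in $u_1(i+1)u_2$: since $u_1$, being a prefix of $u$ which has no $(i+1)i$, and $u_1$ lies entirely to the left of the original right-most symbol $i$, $u_1$ contains no symbol $i+1$; hence the inserted $i+1$ is the left-most one, and $\e_i(u_1(i+1)u_2) = u_1 i u_2 = u$.

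The only genuine subtlety — and the step I would be most careful about — is establishing in each direction that the operator one is about to apply is in fact \emph{defined}, i.e.\ that performing the inverse operation does not create a forbidden $(i+1)i$ subsequence and does not destroy the existence of the needed symbol; this is where the "left-most'' / "right-most'' choices and the hypothesis that $u$ itself has no $(i+1)i$ subsequence are used essentially, and it is exactly the place where a careless argument could go wrong. Everything else is a one-line bookkeeping check that the replacement and its inverse cancel. (This is of course \cite[Lemma~1]{cm_hypoplactic}; the argument above is just its verification transposed to the infinite-rank alphabet $\aA$, with $i$ ranging over all of $\nset$ rather than $\set{1,\ldots,n-1}$, which changes nothing in the reasoning.)
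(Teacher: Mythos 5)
Your argument is correct: both definedness checks (no $(i+1)i$ subsequence is created, and the needed symbol exists) and both extremality checks (the inserted $i$ is the right-most $i$, resp.\ the inserted $i+1$ is the left-most $i+1$) are carried out properly, and these are exactly the points where the hypothesis that $u$ has no $(i+1)i$ subsequence must be invoked. The paper itself gives no proof of this lemma --- it is imported verbatim as \cite[Lemma~1]{cm_hypoplactic} --- and your direct verification from the definition of the operators is the standard argument for it, so there is nothing to compare beyond noting that your reasoning is sound and complete.
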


The operators $\e_i$ and $\f_i$ respectively increase and decrease weight whenever they are defined, in the sense that
if $\e_i(u)$ is defined, then $\wt{\e_i(u)} > \wt{u}$, and if $\f_i(u)$ is defined, then $\wt{\f_i(u)} < \wt{u}$. Thus
$\e_i$ and $\f_i$ are respectively called the quasi-Kashiwara \defterm{raising} and \defterm{lowering}
operators. Furthermore, a word on which none of the $\e_i$ is defined is said to be \defterm{highest-weight}. (Strictly
speaking, it is necessary to distinguish words that are highest-weight with respect to the operators $\e_i$ and words
that are highest-weight with respect to the operators $\ke_i$. Since the $\ke_i$ are not mentioned in the remainder of
the paper, the term `highest-weight' will always be with respect to the operators $\e_i$.)

The \defterm{quasi-crystal graph} for $\hypo$, denoted $\Gamma(\hypo)$, is the directed labelled graph with vertex set $\aA^*$
and, for $u,v \in \aA^*$, an edge from $u$ to $v$ labelled by $i$ if and only if $v = \kf_i(u)$ (or, equivalently,
$u = \ke_i(v)$). \fullref{Figure}{fig:gammahypo1212} shows part of the crystal graph $\Gamma(\hypo)$. For any
$w \in \aA^*$, let $\Gamma(\hypo,w)$ denote the connected component of $\Gamma(\hypo)$ that contains the vertex
$w$. The quasi-crystal graph for $\hypo_n$, denoted $\Gamma(\hypo_n)$, is the subgraph of $\Gamma(\hypo)$ induced by
$\aA_n^*$. Notice that edge labels in $\Gamma(\hypo_n)$ must lie in $\set{1,\ldots,n-1}$, since for $i \geq n$, if
$\f_i(u)$ is defined, then at least one of $u$ and $\f_i(u)$ does not lie in $\aA_n^*$.

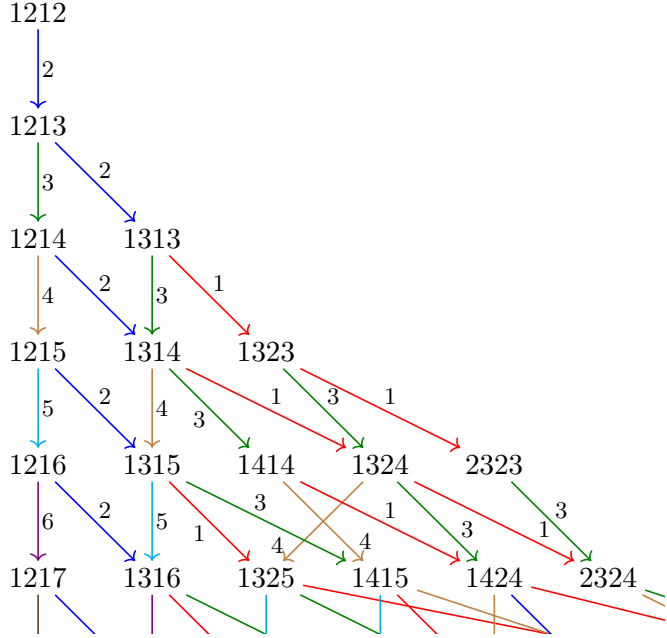
\begin{figure}[t]
  \centering
  \begin{tikzpicture}[bigcrystal,x=15mm,y=15mm,labelledcolouredcrystaledges]
    \clip (-.5,.5) -- (6.5,-.5) -- (6.5,-6) -- (-.5,-6) -- cycle;
    \begin{scope}
      \node (1212) at (0,-0) {$1212$};
      \node (1213) at (0,-1) {$1213$};
      \node (1214) at (0,-2) {$1214$};
      \node (1313) at (1,-2) {$1313$};
      \node (1215) at (0,-3) {$1215$};
      \node (1314) at (1,-3) {$1314$};
      \node (1323) at (2,-3) {$1323$};
      \node (1216) at (0,-4) {$1216$};
      \node (1315) at (1,-4) {$1315$};
      \node (1324) at (3,-4) {$1324$};
      \node (1414) at (2,-4) {$1414$};
      \node (2323) at (4,-4) {$2323$};
      \node (1217) at (0,-5) {$1217$};
      \node (1316) at (1,-5) {$1316$};
      \node (1325) at (2,-5) {$1325$};
      \node (1415) at (3,-5) {$1415$};
      \node (1424) at (4,-5) {$1424$};
      \node (2324) at (5,-5) {$2324$};
      \coordinate (1218) at (0,-6);
      \coordinate (1317) at (1,-6);
      \coordinate (1326) at (2,-6);
      \coordinate (1416) at (3,-6);
      \coordinate (1425) at (4,-6);
      \coordinate (1434) at (5,-6);
      \coordinate (1515) at (6,-6);
      \coordinate (2325) at (7,-6);
      \coordinate (2424) at (8,-6);
      \draw[f5] (1315) to (1316);
      \draw[f5] (1215) to (1216);
      \draw[f3] (2323) to (2324);
      \draw[f2] (1213) to (1313);
      \draw[f4] (1214) to (1215);
      \draw[f3,pos=.75] (1324) to (1424);
      \draw[f3] (1313) to (1314);
      \draw[f2] (1214) to (1314);
      \draw[f4] (1314) to (1315);
      \draw[f4,swap,pos=.95] (1324) to (1325);
      \draw[f6] (1216) to (1217);
      \draw[f2] (1215) to (1315);
      \draw[f1,pos=.75] (1324) to (2324);
      \draw[f1,swap] (1315) to (1325);
      \draw[f1] (1323) to (2323);
      \draw[f3] (1323) to (1324);
      \draw[f2] (1216) to (1316);
      \draw[f1] (1314) to (1324);
      \draw[f3,swap] (1314) to (1414);
      \draw[f1] (1313) to (1323);
      \draw[f3] (1213) to (1214);
      \draw[f1,pos=.5] (1414) to (1424);
      \draw[f4,pos=.9] (1414) to (1415);
      \draw[f3,pos=.4] (1315) to (1415);
      \draw[f2] (1212) to (1213);
    \end{scope}
    \begin{scope}[colouredcrystaledges]
      \draw[f7] (1217) to (1218);
      \draw[f2] (1217) to (1317);
      \draw[f3] (1316) to (1416);
      \draw[f1] (1316) to (1326);
      \draw[f6] (1316) to (1317);
      \draw[f5] (1325) to (1326);
      \draw[f3] (1325) to (1425);
      \draw[f1] (1325) to (2325);
      \draw[f5] (1415) to (1416);
      \draw[f1] (1415) to (1425);
      \draw[f4] (1415) to (1515);
      \draw[f4] (1424) to (1425);
      \draw[f2] (1424) to (1434);
      \draw[f1] (1424) to (2424);
      \draw[f4] (2324) to (2325);
      \draw[f3] (2324) to (2424);
    \end{scope}
    \fill[color=white,path fading=fade right] (5.5,-6.5) rectangle (6.5,-5);
    \fill[color=white,path fading=fade down] (-.3,-6) rectangle (6.5,-5.5);
    \fill[color=white] (-.3,-5.99) rectangle (6.5,-6.1);
  \end{tikzpicture}
  \caption{Part of the connected component $\Gamma(\hypo,1212)$.}
  \label{fig:gammahypo1212}
\end{figure}

Define a relation $\hypoisom$ on $\aA^*$ as follows: $u \sim v$ if and only if there is a labelled digraph isomorphism
$\theta : \Gamma(\hypo,u) \to \Gamma(\hypo,v)$ with $\theta(u) = v$. That is, $u \hypoisom v$ if $u$ and $v$ lie in
corresponding places in isomorphic components of $\Gamma(\hypo)$. For instance, as can be seen in
\fullref{Figure}{fig:gammahypothree}, $1241 \hypoisom 2141$.

Again, note that the definition of $\hypoisom$ is the point of divergence from the previous definition in
\cite{cm_hypoplactic}, where $\hypoisom$ is defined in terms of \emph{weight-preserving} isomorphisms.

By the proof of \cite[Proposition~4]{cm_hypoplactic}, replacing $\aA_n$ with $\aA$ and ignoring the discussion of
weight, the relation $\hypoisom$ is a congruence on $\aA^*$.

\begin{proposition}[{\cite[Proposition~5]{cm_hypoplactic}}]
  \label{prop:eipreservesstd}
  Let $u \in \aA^*$ and $i \in \nset$.
  \begin{enumerate}
  \item If $\e_i(u)$ is defined, then $\std{\e_i(u)} = \std{u}$.
  \item If $\f_i(u)$ is defined, then $\std{\f_i(u)} = \std{u}$.
  \end{enumerate}
\end{proposition}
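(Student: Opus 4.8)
The plan is to prove part~(1) directly and then obtain part~(2) for free from \fullref{Lemma}{lem:efinverse}. Indeed, if $\f_i(u)$ is defined, then $\e_i$ is defined on the word $\f_i(u)$ and $\e_i(\f_i(u)) = u$; applying part~(1) to the word $\f_i(u)$ gives $\std{\f_i(u)} = \std{\e_i(\f_i(u))} = \std{u}$.

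For part~(1), I would first recall the description of standardization from \cite[\S~2]{cm_hypoplactic}: for a word $w = w_1 \cdots w_n$, the word $\std{w}$ is obtained by relabelling the positions $1, \dots, n$ so that smaller letters receive smaller labels and, among equal letters, labels increase from left to right. Equivalently, $\std{w}$ assigns to each position $p$ its rank under the total order $\prec_w$ on $\{1, \dots, n\}$ given by $p \prec_w q$ if and only if $w_p < w_q$, or $w_p = w_q$ and $p < q$. In particular $\prec_u = \prec_v$ implies $\std{u} = \std{v}$, so it suffices to prove that $\e_i$ leaves this order on positions unchanged.

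So suppose $\e_i(u)$ is defined. Then $u$ contains no subsequence $(i+1)i$ — equivalently, every occurrence of $i$ precedes every occurrence of $i+1$ — and $u$ contains at least one letter $i+1$; let $p_0$ be the position of the leftmost such letter. Then $\e_i(u)$ is obtained from $u$ by changing the letter in position $p_0$ from $i+1$ to $i$, and $u$ and $\e_i(u)$ agree in every other position. Since $\prec_u$ and $\prec_{\e_i(u)}$ are total orders that can differ only in how $p_0$ is compared with the other positions, it is enough to check, for each position $q \neq p_0$, that $q \prec_u p_0 \iff q \prec_{\e_i(u)} p_0$. This is a four-case analysis on the letter $u_q$: if $u_q \le i-1$, then $q$ precedes $p_0$ in both orders; if $u_q \ge i+2$, then $p_0$ precedes $q$ in both orders; if $u_q = i$, then $q < p_0$ because $u$ has no subsequence $(i+1)i$, and $q$ precedes $p_0$ in $\prec_u$ (strictly smaller letter) and in $\prec_{\e_i(u)}$ (equal letters, $q < p_0$); and if $u_q = i+1$, then $q > p_0$ by minimality of $p_0$, and $p_0$ precedes $q$ in $\prec_u$ (equal letters, $p_0 < q$) and in $\prec_{\e_i(u)}$ (strictly smaller letter in position $p_0$). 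Hence $\prec_u = \prec_{\e_i(u)}$, and therefore $\std{\e_i(u)} = \std{u}$.

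There is no serious obstacle here; the proof is essentially bookkeeping once the right reformulation of standardization is in place. The one point that genuinely requires care is the tie-breaking convention: the case $u_q = i$ uses that equal letters are numbered from left to right, which is precisely the convention fixed in \cite{cm_hypoplactic}. With the opposite convention the statement would fail, so making the proof depend only on this feature (rather than on any finer detail of the definition) is the part worth getting right.
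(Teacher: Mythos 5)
Your proof is correct. The paper does not prove this proposition itself --- it imports it verbatim from \cite[Proposition~5]{cm_hypoplactic} --- but your argument (the letter changed by $\e_i$ is simultaneously the leftmost $i{+}1$ and becomes the rightmost $i$, so the total order on positions underlying standardization is unchanged, with part~(2) then following from \fullref{Lemma}{lem:efinverse}) is the expected direct verification, and your observation that the left-to-right tie-breaking convention for equal letters is the one point the proof genuinely depends on is exactly right (it is the convention used in this paper, as one can check against the example $\std{5451761524}=6471\underline{10}92835$).
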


In contrast to \fullref{Proposition}{prop:eipreservesstd}, the usual Kashiwara operators do not preserve
standardization: $\ke_2(1322) = 1323$, but $\std{1322} = 1423 \neq 1324 = \std{1323}$.


\begin{proposition}
  \label{prop:hypocongsimhypo}
  The relations $\hypocong$ and $\hypoisom$ coincide.
\end{proposition}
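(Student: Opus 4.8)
The plan is to mirror the proof of \fullref{Proposition}{prop:placcongsimplac}, with the quasi-Kashiwara operators $\e_i$, $\f_i$ in place of the Kashiwara operators and the notion of a highest-weight word (with respect to the $\e_i$) in place of that of a Yamanouchi word. For the inclusion $\hypocong\subseteq\hypoisom$, I would argue exactly as in the plactic case: by iterated application of the compatibility of the quasi-Kashiwara operators with $\hypocong$ (the hypoplactic analogue of \fullref[(2)]{Theorem}{thm:kekfcompatible}, which follows from the results of \cite{cm_hypoplactic}), if $u\hypocong v$ then for every $i$ the operator $\e_i$ is defined on $u$ if and only if it is defined on $v$, in which case $\e_i(u)\hypocong\e_i(v)$, and similarly for $\f_i$; running over the component $\Gamma(\hypo,u)$ this produces a labelled digraph isomorphism $\theta : \Gamma(\hypo,u)\to\Gamma(\hypo,v)$ with $\theta(u)=v$, so $u\hypoisom v$.

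For the converse, suppose $\theta : \Gamma(\hypo,u)\to\Gamma(\hypo,v)$ is a labelled digraph isomorphism with $\theta(u)=v$. Whenever $\e_i(u)$ is defined, $\e_i(v)$ is defined (as $\theta$ preserves labelled edges and their reversals) and $\theta(\e_i(u))=\e_i(v)$; since each application of $\e_i$ replaces a letter $i+1$ by a letter $i$, this process terminates, and one reaches highest-weight words $u'$, $v'$ with $\theta(u')=v'$, both of which are highest-weight because a vertex of a component has no in-edge if and only if its image does. The next step is to recover the evaluation of $u'$ from the abstract labelled digraph $\Gamma(\hypo,u')$ together with the marked vertex $u'$: if $n$ is the largest letter occurring in $u'$, then $n$ is the largest label on an edge leaving $u'$, the maximal directed path of label-$n$ edges from $u'$ has length $|u'|_n$ (each step applies $\f_n$, replacing the rightmost $n$ by $n+1$, and no blocking subsequence $(n+1)n$ is ever created), and, repeating this from the end of that path using labels $n-1,n-2,\ldots,1$ in turn, one reads off $|u'|_{n-1},\ldots,|u'|_1$ in the same way. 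Hence $\ev{u'}$, and likewise $\ev{v'}$, is determined by the respective component; since the components are isomorphic, $\ev{u'}=\ev{v'}=:\lambda$.

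It remains to see that a highest-weight word is determined up to $\hypocong$ by its evaluation. A highest-weight word $w$ of evaluation $\lambda=(\lambda_1,\ldots,\lambda_m)$ uses exactly the letters $1,\ldots,m$ and contains a subsequence $(i+1)i$ for each $i<m$, so in $\std{w}$ the letter $\lambda_1+\cdots+\lambda_i+1$ (coming from the leftmost $i+1$ of $w$) precedes the letter $\lambda_1+\cdots+\lambda_i$ (coming from the rightmost $i$ of $w$), while no other consecutive pair $j+1,j$ of standardized letters is in this relation; thus whether $j+1$ precedes $j$ in $\std{w}$ depends only on $\lambda$. Consequently $\std{u'}$ and $\std{v'}$ are $\hypocong$-equivalent by the characterization of $\hypocong$ on standard words from \cite{cm_hypoplactic}, and, having equal evaluation as well, $u'\hypocong v'$ by the standardization characterization of $\hypocong$. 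Finally, by iterated application of the compatibility of the $\e_i$ and $\f_i$ with $\hypocong$ together with \fullref{Lemma}{lem:efinverse}, the relation $u'\hypocong v'$ propagates through all of $\Gamma(\hypo,u)$, giving $w\hypocong\theta(w)$ for every $w\in\Gamma(\hypo,u)$, and in particular $u\hypocong v$.

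I expect the main obstacle to be the extraction of $\ev{u'}$ from the abstract component: one must verify that the maximal monochromatic paths behave as described — the crux being that running the $\f_i$-bursts in decreasing order of $i$ never produces a subsequence $(i+1)i$, so no step is unexpectedly blocked — and that no non-trivial component degenerates so badly that this data is lost, which is exactly what can happen in the finite-rank setting and is what the infinite-rank hypothesis rules out, just as in \fullref{Proposition}{prop:placcongsimplac}.
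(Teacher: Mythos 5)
Your proof is correct and follows the same overall strategy as the paper's: reduce both words to highest-weight words $u'$, $v'$ via the isomorphism, show that the weight of a highest-weight word is recoverable from the abstract labelled digraph together with the marked vertex (by reading off maximal monochromatic path lengths in decreasing order of label, exactly as in the paper's proof of \fullref{Proposition}{prop:placcongsimplac}), conclude $u'\hypocong v'$, and propagate back down using compatibility of the operators with $\hypocong$. The one place you diverge is the step ``two highest-weight words of equal weight are $\hypocong$-equivalent'': the paper deduces this from the canonical form $u'\hypocong\colreading{\phypo{u'}}$ together with the fact that highest-weight words of equal weight have equal quasi-ribbon column readings (Lemmata~5--6 and Corollary~3 of \cite{cm_hypoplactic}), whereas you compute the recoil set of $\std{u'}$ directly from the highest-weight condition and invoke the characterization of $\hypocong$ by evaluation plus standardization class. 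Your recoil computation is sound (the boundary pairs $\bigl(\lambda_1+\cdots+\lambda_i+1,\;\lambda_1+\cdots+\lambda_i\bigr)$ are inverted precisely because $w$ contains a subsequence $(i+1)i$, and no other consecutive pair can be), and your verification that the $\f_i$-bursts are never blocked is exactly the point that needs checking; so both routes work, yours being slightly more self-contained and the paper's leaning on the quasi-ribbon machinery it has already built.
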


\begin{proof}
  Let $u,v \in \aA^*$.

  Suppose that $u \hypocong v$. Using the reasoning that leads to the proof of \cite[Proposition~8]{cm_hypoplactic},
  but replacing $\aA_n$ by $\aA$, proves that $u \hypoisom v$.

  Now suppose that there is a labelled digraph isomorphism $\theta : \Gamma(\hypo,u) \to \Gamma(\hypo,v)$ with
  $\theta(u) = v$. Then there is a sequence of operators $\e_{i_1},\ldots,\e_{i_k}$ such that the words
  $u' = \e_{i_1}\cdots\e_{i_k}(u)$ and $v' = \e_{i_1}\cdots\e_{i_k}(v)$ are highest-weight. By the same reasoning as in
  \fullref{Proposition}{prop:placcongsimplac}, $\wt{u'}$ and $\wt{v'}$ are determined by the isomorphic connected
  components $\Gamma(\hypo,u)$ and $\Gamma(\hypo,v)$, and thus $\wt{u'} = \wt{v'}$. Since
  $u' \hypocong \colreading{\phypo{u'}}$, and $v' \hypocong \colreading{\phypo{v'}}$ by \cite[Lemmata~5 and
  6]{cm_hypoplactic} (replacing $\aA_n$ by $\aA$), and $\colreading{\phypo{u'}}$ and $\colreading{\phypo{v'}}$ are
  highest-weight (since ${\hypocong} \subseteq {\sim_\hypo}$ by the first part) and have the same weight,
  $\colreading{\phypo{u'}} = \colreading{\phypo{v'}}$ by \cite[Corollary~3]{cm_hypoplactic}. Hence
  $u' \hypocong \colreading{\phypo{u'}} = \colreading{\phypo{v'}} \hypocong v'$. By \cite[Lemma~6(2)]{cm_hypoplactic},
  applying $\f_i$ to $\hypocong$-related words yields $\hypocong$-related words. Hence $u \hypocong v$.
\end{proof}

\begin{figure}[t]
  \centering
  \begin{tikzpicture}[bigcrystal,x=15mm,y=15mm,labelledcolouredcrystaledges]
    \begin{scope}[xshift=0mm]
      \begin{scope}
        \node (1212) at (0,-0) {$1212$};
        \node (1213) at (0,-1) {$1213$};
        \node (1214) at (0,-2) {$1214$};
        \node (1313) at (1,-2) {$1313$};
        \node (1215) at (0,-3) {$1215$};
        \node (1314) at (1,-3) {$1314$};
        \node (1323) at (2,-3) {$1323$};
        \coordinate (1216) at (0,-4) {};
        \coordinate (1315) at (1,-4) {};
        \coordinate (1324) at (3,-4) {};
        \coordinate (1414) at (2,-4) {};
        \coordinate (2323) at (4,-4) {};
        \draw[f2] (1212) to (1213);
        \draw[f2] (1213) to (1313);
        \draw[f3] (1213) to (1214);
        \draw[f4] (1214) to (1215);
        \draw[f2] (1214) to (1314);
        \draw[f3] (1313) to (1314);
        \draw[f1] (1313) to (1323);
      \end{scope}
      \begin{scope}[colouredcrystaledges]
        \draw[f5] (1215) to (1216);
        \draw[f2] (1215) to (1315);
        \draw[f4] (1314) to (1315);
        \draw[f1] (1314) to (1324);
        \draw[f3] (1314) to (1414);
        \draw[f1] (1323) to (2323);
        \draw[f3] (1323) to (1324);
      \end{scope}
      \fill[color=white,path fading=fade down] ($ (1215) + (-.2,-.5) $) rectangle ($ (1323) + (2,-.8) $);
      \fill[color=white] ($ (1215) + (-.2,-.79) $) rectangle ($ (1323) + (2,-1.1) $);
      \fill[color=white,path fading=fade right] ($ (1323) + (.3,0) $) rectangle ($ (1323) + (.6,-1) $);
      \fill[color=white] ($ (1323) + (.58,0) $) rectangle ($ (1323) + (2,-1) $);
    \end{scope}
    \begin{scope}[xshift=45mm]
      \begin{scope}
        \node (2121) at (0,-0) {$2121$};
        \node (2131) at (0,-1) {$2131$};
        \node (2141) at (0,-2) {$2141$};
        \node (3131) at (1,-2) {$3131$};
        \node (2151) at (0,-3) {$2151$};
        \node (3141) at (1,-3) {$3141$};
        \node (3231) at (2,-3) {$3231$};
        \coordinate (2161) at (0,-4) {};
        \coordinate (3151) at (1,-4) {};
        \coordinate (3241) at (3,-4) {};
        \coordinate (4141) at (2,-4) {};
        \coordinate (3232) at (4,-4) {};
        \draw[f2] (2121) to (2131);
        \draw[f2] (2131) to (3131);
        \draw[f3] (2131) to (2141);
        \draw[f4] (2141) to (2151);
        \draw[f2] (2141) to (3141);
        \draw[f3] (3131) to (3141);
        \draw[f1] (3131) to (3231);
      \end{scope}
      \begin{scope}[colouredcrystaledges]
        \draw[f5] (2151) to (2161);
        \draw[f2] (2151) to (3151);
        \draw[f4] (3141) to (3151);
        \draw[f1] (3141) to (3241);
        \draw[f3] (3141) to (4141);
        \draw[f1] (3231) to (3232);
        \draw[f3] (3231) to (3241);
      \end{scope}
      \fill[color=white,path fading=fade down] ($ (2151) + (-.2,-.5) $) rectangle ($ (3231) + (2,-.8) $);
      \fill[color=white] ($ (2151) + (-.2,-.79) $) rectangle ($ (3231) + (2,-1.1) $);
      \fill[color=white,path fading=fade right] ($ (3231) + (.3,0) $) rectangle ($ (3231) + (.6,-1) $);
      \fill[color=white] ($ (3231) + (.58,0) $) rectangle ($ (3231) + (2,-1) $);
    \end{scope}
    \begin{scope}[xshift=90mm]
      \begin{scope}
        \node (1221) at (0,-0) {$1221$};
        \node (1231) at (0,-1) {$1231$};
        \node (1241) at (0,-2) {$1241$};
        \node (1331) at (1,-2) {$1331$};
        \node (1251) at (0,-3) {$1251$};
        \node (1341) at (1,-3) {$1341$};
        \node (1332) at (2,-3) {$1332$};
        \coordinate (1261) at (0,-4) {};
        \coordinate (1351) at (1,-4) {};
        \coordinate (1342) at (3,-4) {};
        \coordinate (1441) at (2,-4) {};
        \coordinate (2332) at (4,-4) {};
        \draw[f2] (1221) to (1231);
        \draw[f2] (1231) to (1331);
        \draw[f3] (1231) to (1241);
        \draw[f4] (1241) to (1251);
        \draw[f2] (1241) to (1341);
        \draw[f3] (1331) to (1341);
        \draw[f1] (1331) to (1332);
      \end{scope}
      \begin{scope}[colouredcrystaledges]
        \draw[f5] (1251) to (1261);
        \draw[f2] (1251) to (1351);
        \draw[f4] (1341) to (1351);
        \draw[f1] (1341) to (1342);
        \draw[f3] (1341) to (1441);
        \draw[f1] (1332) to (2332);
        \draw[f3] (1332) to (1342);
      \end{scope}
      \fill[color=white,path fading=fade down] ($ (1251) + (-.2,-.5) $) rectangle ($ (1332) + (2,-.8) $);
      \fill[color=white] ($ (1251) + (-.2,-.79) $) rectangle ($ (1332) + (2,-1.1) $);
      \fill[color=white,path fading=fade right] ($ (1332) + (.3,0) $) rectangle ($ (1332) + (.6,-1) $);
      \fill[color=white] ($ (1332) + (.58,0) $) rectangle ($ (1332) + (2,-1) $);
    \end{scope}
    \node at ($ (1212) + (15mm,7.5mm) $) {$\overbrace{\hbox{\vrule width 40mm height 0cm depth 0cm}}$};
    \node at ($ (2121) + (15mm,7.5mm) $) {$\overbrace{\hbox{\vrule width 40mm height 0cm depth 0cm}}$};
    \node at ($ (1221) + (15mm,7.5mm) $) {$\overbrace{\hbox{\vrule width 40mm height 0cm depth 0cm}}$};
    \node at ($ (1212) + (15mm,12.5mm) $) {$\Gamma(\hypo,1212)$};
    \node at ($ (2121) + (15mm,12.5mm) $) {$\Gamma(\hypo,2121)$};
    \node at ($ (1221) + (15mm,12.5mm) $) {$\Gamma(\hypo,1221)$};
  \end{tikzpicture}
  \caption{The isomorphic components $\Gamma(\hypo,1212)$, $\Gamma(\hypo,2121)$, and $\Gamma(\hypo,1221)$ of the
    quasi-crystal graph $\Gamma(\hypo)$.}
  \label{fig:gammahypothree}
\end{figure}
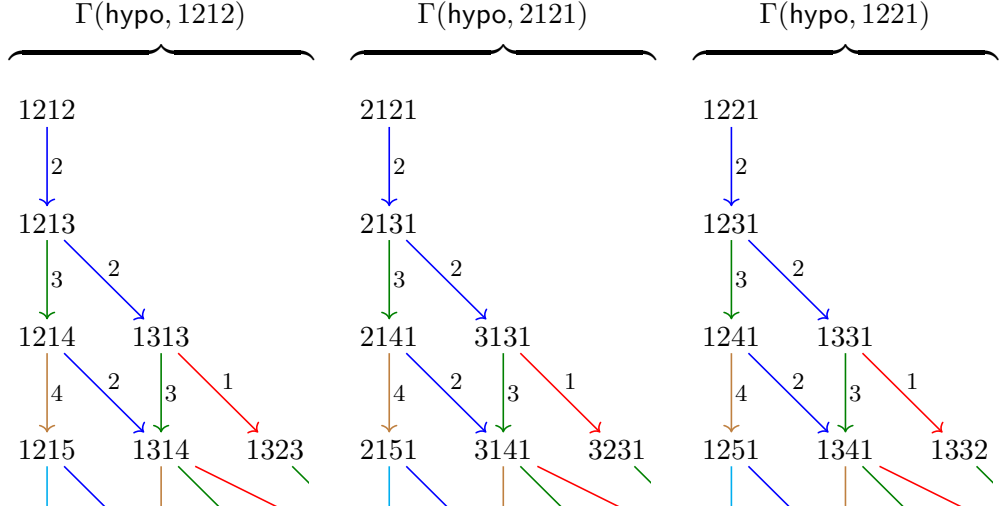

In the finite rank case, the analogy of \fullref{Proposition}{prop:hypocongsimhypo} does not hold: the connected
components $\Gamma(\hypo_n,\emptyword)$ and $\Gamma(\hypo_n,n(n-1)\cdots 21)$ both consist of single vertices and thus
are isomorphic as labelled digraphs. This is again the crucial difference between the infinite-rank and finite-rank
cases: In the infinite-rank case, all necessary information about weights is contained in the abstract graph
$\Gamma(\hypo)$ and so it suffices to consider labelled digraph isomorphisms between connected components. However, in
the finite-rank case, the abstract graph $\Gamma(\hypo_n)$ does not contain sufficient information, and so it is
necessary to restrict to \emph{weight-preserving} labelled digraph isomorphisms.

There is a very neat interaction between the structure of $\Gamma(\hypo)$ and the hypoplactic analogue of the
Robinson--Schensted correspondence; the proof for the finite-rank case applies in the infinite-rank case:

\begin{theorem}[{\cite[Theorem~2]{cm_hypoplactic}}]
  \label{thm:samecomponentiffsameqhypo}
  Let $u,v \in \aA^*$. The words $u$ and $v$ lie in the same connected component of $\Gamma(\hypo)$ if and only if
  $\qhypo{u} = \qhypo{v}$.
\end{theorem}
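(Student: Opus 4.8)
The plan is to prove the equivalent statement that $u$ and $v$ lie in the same connected component of $\Gamma(\hypo)$ if and only if $\std{u} = \std{v}$, and to combine this with the observation that $\std{u} = \std{v}$ if and only if $\qhypo{u} = \qhypo{v}$. For the latter observation I would use that the recording ribbon depends on a word only through its standardization, so $\qhypo{u} = \qhypo{\std{u}}$ --- exactly as $\qplac{u} = \qplac{\std{u}}$ in the plactic case, and for the same reason, since hypoplactic insertion uses only the relative order of the letters --- and that, conversely, $\std{u}$ is recoverable from $\qhypo{u}$, because $\qhypo{u} = \qhypo{\std{u}}$ is a standard quasi-ribbon tableau whose (ribbon) shape admits a unique standard filling, so the hypoplactic correspondence applied to the pair $(\phypo{\std{u}}, \qhypo{u})$ returns $\std{u}$. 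Together with \fullref{Proposition}{prop:eipreservesstd}, the first point also shows that $\qhypo$ is unchanged by each quasi-Kashiwara operator.

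For the forward implication, if $u$ and $v$ lie in the same connected component of $\Gamma(\hypo)$ then they are joined by an undirected path each of whose edges is an application of some $\e_i$ or $\f_i$; since $\qhypo$ is invariant under these operators, $\qhypo{u} = \qhypo{v}$, and hence $\std{u} = \std{v}$.

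For the converse, suppose $\std{u} = \std{v}$. Applying raising operators $\e_i$ to $u$ for as long as any is defined --- a process that terminates, since each application fixes $\abs{u}$ while strictly decreasing the bounded-below quantity $\sum_{i} i\,\abs{u}_i$ --- produces a highest-weight word $u_0$ in the same component as $u$, with $\std{u_0} = \std{u}$ by \fullref{Proposition}{prop:eipreservesstd}; likewise one obtains a highest-weight word $v_0$ in the component of $v$ with $\std{v_0} = \std{v} = \std{u} = \std{u_0}$. It then suffices to establish the key claim that \emph{a highest-weight word is uniquely determined by its standardization}: granting this, $u_0 = v_0$, so $u$ and $v$ are joined by the path $u \thue u_0 = v_0 \thue v$ and hence lie in the same component.

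The key claim is where I expect the main difficulty. Here is the intended argument. If $w$ is highest-weight, then its set of letters is an initial segment $\{1,\dots,m\}$: whenever $w$ contains $i+1$, the undefinedness of $\e_i$ forces $w$ to contain a subsequence $(i+1)i$, hence also the letter $i$; and the least letter of $w$ must be $1$, for otherwise a raising operator would apply. Consequently $w$ is obtained from $s = \std{w}$ by replacing each letter $s_j$ of $s$ with $\phi(s_j)$, for some weakly increasing surjection $\phi$ onto $\{1,\dots,m\}$. One checks that the requirement $\std{w} = s$ holds precisely when each fibre of $\phi$ is a run of consecutive values that occur in $s$ at increasing positions, so every \emph{value-descent} of $s$ (a value $p$ whose successor $p+1$ occurs in $s$ before $p$) must lie on a boundary between fibres of $\phi$; and the highest-weight condition forces the fibre boundaries to be exactly the value-descents, since if two adjacent fibres met at a non-value-descent then --- both fibres being increasing runs in $s$ --- every position of $s$ carrying a value from the lower fibre would precede every position carrying a value from the upper fibre, so $w$ would contain no subsequence $(i+1)i$ for the relevant $i$, contradicting that $w$ is highest-weight. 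Hence $\phi$, and therefore $w$, is determined by $s$. The delicate step is the precise combinatorial translation of the conditions $\std{w} = s$ and ``highest-weight'' into this statement about value-descents; I would carry it out by first describing, for a fixed standard word $s$, exactly which weakly increasing surjections $\phi$ reproduce $s$ under standardization.
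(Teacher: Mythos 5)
Your argument is correct, but it is organized in the opposite direction to the paper, which in fact gives no proof of this theorem at all: it is quoted from the earlier paper \cite{cm_hypoplactic} with the remark that the finite-rank proof carries over. In the present paper the theorem is taken as known and the standardization criterion (each component contains exactly one standard word, so $u$ and $v$ share a component if and only if $\std{u}=\std{v}$) is \emph{deduced} from it in \fullref{Corollary}{corol:stdinsamecomp} via \fullref{Lemma}{lem:qhypostdqhypo}. You instead prove the standardization criterion directly and then transfer it to $\qhypo{\cdot}$. The two facts you need for the transfer are exactly \fullref{Lemma}{lem:qhypostdqhypo} (that $\qhypo{u}=\qhypo{\std{u}}$ --- your ``only the relative order matters'' justification is a little glib, since ties do matter in insertion, but the lemma is proved in the paper from Novelli's results and is not circular to invoke) and the recoverability of $\std{u}$ from $\qhypo{u}$ via the uniqueness of the standard quasi-ribbon tableau of a given ribbon shape together with the bijectivity of the hypoplactic Robinson--Schensted correspondence; both are sound. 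The genuinely new content of your proof is the self-contained, tableau-free argument that a highest-weight word is determined by its standardization: your analysis of which weakly increasing surjections $\phi$ destandardize $s$ to a word with standardization $s$ (no value-descent of $s$ may lie in the interior of a fibre) and of what highest weight forces (every fibre boundary must be a value-descent) correctly pins down $\phi$, and hence $w$, uniquely. The cited reference reaches the same point through the quasi-ribbon insertion machinery (highest-weight words are characterized by quasi-ribbon tableaux whose $i$-th row is filled with $i$'s). Your route buys independence from that machinery for the core connectivity statement, at the cost of the value-descent bookkeeping, which you have carried out correctly.
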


Thus the recording ribbons $\qhypo{\cdot}$ produced by the insertion algorithm (see the exposition in
\cite[\S~4]{cm_hypoplactic}) index connected components of $\Gamma(\hypo)$. However, in the infinite-rank case,
connected components are also indexed by standard words, as shown in \fullref{Corollary}{corol:stdinsamecomp} below.

\begin{lemma}
  \label{lem:qhypostdqhypo}
  For any $u \in \aA^*$, the equality $\qhypo{\std{u}} = \qhypo{u}$ holds.
\end{lemma}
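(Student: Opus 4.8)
The plan is to exploit the compatibility of the quasi-Kashiwara operators with standardization (\fullref{Proposition}{prop:eipreservesstd}) together with \fullref{Theorem}{thm:samecomponentiffsameqhypo}, which characterizes when two words share a connected component of $\Gamma(\hypo)$ in terms of equality of $\qhypo{\cdot}$-symbols. The key observation is that a word $u$ and its standardization $\std{u}$ ought to lie in the same connected component of $\Gamma(\hypo)$; once that is established, \fullref{Theorem}{thm:samecomponentiffsameqhypo} immediately gives $\qhypo{\std{u}} = \qhypo{u}$.

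First I would verify the claim that $u$ and $\std{u}$ are in the same component. By \fullref{Proposition}{prop:eipreservesstd}, applying any $\e_i$ or $\f_i$ to a word does not change its standardization; so every word in the component $\Gamma(\hypo, u)$ has standardization equal to $\std{u}$. In particular, if I take a sequence of raising operators $\e_{i_1}, \ldots, \e_{i_k}$ driving $u$ to a highest-weight word $u_0 = \e_{i_1}\cdots\e_{i_k}(u)$, then $\std{u_0} = \std{u}$. Now $\std{u}$ is a standard word, and a standard word is itself highest-weight precisely when it is the identity permutation word (or more precisely, one checks that the only standard word in a given component that is highest-weight is the standardization of the highest-weight word of that component). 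The cleanest route is: since $u_0$ is highest-weight and $\std{u_0}$ is obtained from $u_0$ by standardization, and standardization of a highest-weight word is again highest-weight — this needs a small check using the definition of $\e_i$ on standard words — we get that $\std{u}$ lies in the component $\Gamma(\hypo, u_0) = \Gamma(\hypo, u)$. Hence $u \hypoisom$-component $= \std{u} \hypoisom$-component, and applying \fullref{Theorem}{thm:samecomponentiffsameqhypo} finishes the argument.

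Alternatively, and perhaps more robustly, I would argue directly that for each $i$, there is an index $j$ (depending on $u$ and $i$) such that $\std{\cdot}$ intertwines $\f_i$ on $u$ with $\f_j$ on $\std{u}$ — that is, whenever $\f_i(u)$ is defined, so is $\f_j(\std{u})$ and $\std{\f_i(u)} = \f_j(\std{u})$, and similarly for $\e_i$. The index $j$ is determined by where the relevant symbol $i$ (resp.\ $i+1$) of $u$ sits among the equal symbols after standardization. This intertwining lets one transport a highest-weight-reducing sequence for $u$ to one for $\std{u}$, showing both reduce to words with the same $\qhypo{\cdot}$-symbol, namely a standard highest-weight word; combined with \fullref{Theorem}{thm:samecomponentiffsameqhypo} this gives the result.

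The main obstacle is the bookkeeping in checking that standardization of a highest-weight word is highest-weight (or equivalently that the intertwining index $j$ behaves correctly): one must track, for a word $u$ containing no subsequence $(i+1)i$, how the block of $i$'s and $(i+1)$'s is reindexed under standardization and confirm that the "no $(i+1)i$ subsequence" condition is preserved in the appropriate shifted form. This is routine but must be done carefully; everything else is an immediate appeal to \fullref{Proposition}{prop:eipreservesstd} and \fullref{Theorem}{thm:samecomponentiffsameqhypo}.
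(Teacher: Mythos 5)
Your reduction --- establish that $u$ and $\std{u}$ lie in the same connected component of $\Gamma(\hypo)$, then apply \fullref{Theorem}{thm:samecomponentiffsameqhypo} --- would be a valid (and genuinely different) route to the lemma, but neither of your two arguments for that key claim works, and the claim itself is not free: in the paper it appears as \fullref{Corollary}{corol:stdinsamecomp}, which is \emph{deduced from} this lemma, so you must prove it independently. In your first route, the deferred ``small check'' is false: the standardization of a highest-weight word need not be highest-weight. For example, $u = 11$ is highest-weight (no $\e_i$ is defined on it), but $\std{u} = 12$ satisfies $\e_1(12) = 11$. Worse, even granting that check, the inference ``$u_0$ and $\std{u_0}$ are both highest-weight, hence $\std{u_0}$ lies in $\Gamma(\hypo,u_0)$'' is a non sequitur --- distinct highest-weight words generally lie in distinct components, and placing $\std{u_0}$ in the component of $u_0$ is precisely what has to be shown. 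Your second route is internally inconsistent: the proposed intertwining $\std{\f_i(u)} = \f_j(\std{u})$ contradicts \fullref{Proposition}{prop:eipreservesstd}, which you yourself invoke and which gives $\std{\f_i(u)} = \std{u}$, whereas $\f_j(\std{u})$ has strictly smaller weight than $\std{u}$. The correct picture is that $\std{u}$ is a \emph{vertex} of $\Gamma(\hypo,u)$ reachable from $u$ by a sequence of operators (e.g.\ $\f_2(121) = 131$ and $\f_1(131) = 132 = \std{121}$), not the image of $u$ under a morphism commuting with the operators; exhibiting such a sequence for general $u$ is a real construction that your sketch does not supply.

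The paper's proof sidesteps the quasi-crystal structure entirely. Writing $u = u_1\cdots u_k$ and $\std{u} = s_1 \cdots s_k$, it observes that $\std{u_1\cdots u_h} = \std{s_1\cdots s_h}$ for every prefix length $h$, so the recording ribbons $\qhypo{u_1\cdots u_h}$ and $\qhypo{s_1\cdots s_h}$ have the same shape (namely the descent composition of the inverse of the standardized prefix) at every step of the insertion; since this sequence of shapes determines where each new cell of the recording ribbon is placed, the two recording ribbons coincide. If you want to keep your crystal-theoretic approach, you would need to actually construct, for arbitrary $u$, a sequence of quasi-Kashiwara operators carrying $u$ to $\std{u}$ (or otherwise prove \fullref{Corollary}{corol:stdinsamecomp} without appeal to this lemma); as it stands the central step is missing.
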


\begin{proof}
  Let $u = u_1\cdots u_k$ and $\std{u} = s_1\cdots s_k$, where $u_h,s_h \in \aA$. Since
  $\std{u_1\cdots u_k} = \std{s_1\cdots s_k}$, it follows that $\std{u_1\cdots u_h} = \std{s_1\cdots s_h}$ for all $h$
  (this is immediate from the definition of standardization \cite[Lemma~2.2]{novelli_hypoplactic}). Thus, by
  \cite[Theorems~4.12 and~4.16]{novelli_hypoplactic}, $\qhypo{u_1\cdots u_h}$ and $\qhypo{s_1\cdots s_h}$ both have shape
  $\descomp{\std{u_1\cdots u_h}^{-1}}$ for all $h$. The sequence of these shapes determines where new symbols are
  inserted during the computation of $\phypo{u}$ and $\qhypo{\std{u}}$ by \cite[Algorithm~4]{cm_hypoplactic}, and so
  $\qhypo{u} = \qhypo{\std{u}}$.
\end{proof}

\begin{corollary}
  \label{corol:stdinsamecomp}
  In every connected component of $\Gamma(\hypo)$, there is exactly one standard word, and so for all $u,v \in \aA^*$,
  the words $u$ and $v$ lie in the same connected component if and only if $\std{u} = \std{v}$.
\end{corollary}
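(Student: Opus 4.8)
The plan is to combine the three facts already established: \fullref{Theorem}{thm:samecomponentiffsameqhypo}, which says that the connected components of $\Gamma(\hypo)$ are exactly the fibres of the map $u \mapsto \qhypo{u}$; \fullref{Lemma}{lem:qhypostdqhypo}, which says $\qhypo{\std{u}} = \qhypo{u}$; and \fullref{Proposition}{prop:eipreservesstd}, which says the quasi-Kashiwara operators preserve standardization. For \emph{existence} of a standard word in a given component, one takes any vertex $w$ of that component; then $\std{w}$ is a standard word and, by \fullref{Lemma}{lem:qhypostdqhypo}, it satisfies $\qhypo{\std{w}} = \qhypo{w}$, so by \fullref{Theorem}{thm:samecomponentiffsameqhypo} it lies in the same connected component as $w$. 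Hence every connected component of $\Gamma(\hypo)$ contains at least one standard word.

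For \emph{uniqueness}, suppose $u$ and $v$ are standard words lying in the same connected component of $\Gamma(\hypo)$. Since the edges of $\Gamma(\hypo)$ are given by $v = \f_i(u)$, and since $\e_i$ and $\f_i$ are mutually inverse by \fullref{Lemma}{lem:efinverse}, passing from $u$ to $v$ within their component amounts to applying to $u$ a finite sequence of quasi-Kashiwara operators $\e_i$ and $\f_i$. By \fullref{Proposition}{prop:eipreservesstd}, each such application leaves the standardization unchanged, so $\std{u} = \std{v}$. As standardization acts as the identity on standard words, this gives $u = \std{u} = \std{v} = v$.

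Finally I would deduce the displayed equivalence. If $u$ and $v$ lie in the same connected component, then by the existence argument applied to $u$ and to $v$ the standard words $\std{u}$ and $\std{v}$ also lie in that component, so uniqueness forces $\std{u} = \std{v}$. Conversely, if $\std{u} = \std{v}$ then $\qhypo{u} = \qhypo{\std{u}} = \qhypo{\std{v}} = \qhypo{v}$ by \fullref{Lemma}{lem:qhypostdqhypo}, whence $u$ and $v$ lie in the same connected component by \fullref{Theorem}{thm:samecomponentiffsameqhypo}. No step here is genuinely difficult; the only point requiring a moment's care is the reduction, in the uniqueness part, of ``lying in the same connected component'' to ``connected by a chain of applications of the $\e_i$ and $\f_i$'', which is exactly what \fullref{Lemma}{lem:efinverse} supplies, together with the observation that standardization is idempotent.
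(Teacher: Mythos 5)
Your proof is correct and follows essentially the same route as the paper: existence via \fullref{Lemma}{lem:qhypostdqhypo} combined with \fullref{Theorem}{thm:samecomponentiffsameqhypo}, and uniqueness via \fullref{Proposition}{prop:eipreservesstd} together with the fact that standard words are fixed by standardization. The only difference is that you spell out the reduction from ``same connected component'' to ``chain of operator applications'' via \fullref{Lemma}{lem:efinverse}, which the paper leaves implicit.
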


\begin{proof}
  By \fullref{Lemma}{lem:qhypostdqhypo} and \fullref{Theorem}{thm:samecomponentiffsameqhypo}, $u$ and $\std{u}$ are in
  the same component of $\Gamma(\hypo)$, so each connected component contains at least one standard word. Suppose $u'$
  is a standard word in the same component of $\Gamma(\hypo)$ as $u$. Then it is possible to apply a sequence of
  quasi-Kashiwara operators to $u$ and obtain $u'$. By \fullref{Proposition}{prop:eipreservesstd}, quasi-Kashiwara
  operators preserve standardization; thus $\std{u} = \std{u'}$ and so $u = u'$ since $u$ and $u'$ are standard
  words. The last claim in the statement follows immediately.
\end{proof}

\begin{remark}
  As noted in \cite{cm_hypoplactic}, a related notion of `quasi-crystal' is found in Krob and Thibon
  \cite{krob_noncommutative5}; see also \cite{hivert_hecke}. However, the Krob--Thibon quasi-crystal describes the
  restriction to quasi-ribbon tableaux (or rather corresponding to quasi-ribbon tableaux) of the action of the usual
  Kashiwara operators: it does not apply to all words and so does not yield a definition of $\hypocong$ via isomorphisms
  of components.
\end{remark}

\section{Binary trees and the sylvester and Baxter monoids}
\label{sec:binarytrees}

This section gathers the necessary definitions and background on binary search trees, the sylvester and Baxter monoids,
and the analogues of the Robinson--Schensted correspondence. For further background on the sylvester monoid, see
\cite{hivert_sylvester}; for the Baxter monoid, see \cite{giraudo_baxter2}.

\subsection{Binary trees and insertion}

A \defterm{left strict binary search tree} is a labelled rooted binary tree where the label of each node
is strictly greater than the label of every node in its left subtree, and less or equal to than the label of every
node in its right subtree. A \defterm{right strict binary search tree} is a labelled rooted binary tree where the label of each node is greater
than or equal to the label of every node in its left subtree, and strictly less than the label of every node in its
right subtree. The following are examples of, respectively, left strict and right strict binary search trees:
\begin{equation}
\label{eq:bsteg}
\begin{tikzpicture}[tinybst,baseline=-10mm]
  \node (root) {$5$}
    child[sibling distance=16mm] { node (0) {$4$}
      child { node (00) {$1$}
        child[missing]
        child { node (001) {$1$}
          child[missing]
          child { node (0011) {$2$} }
        }
      }
      child { node (01) {$4$} }
    }
    child[sibling distance=16mm] { node (1) {$5$}
      child[missing]
      child { node (11) {$7$}
        child { node (110) {$6$}
          child { node (1100) {$5$} }
          child[missing]
        }
        child[missing]
      }
    };
\end{tikzpicture}
\quad
\begin{tikzpicture}[tinybst,baseline=-10mm]
  \node (root) {$4$}
    child[sibling distance=16mm] { node (0) {$2$}
      child { node (00) {$1$}
        child { node (000) {$1$} }
        child[missing]
      }
      child { node (01) {$4$} }
    }
    child[sibling distance=16mm] { node (1) {$5$}
      child { node (10) {$5$}
        child { node (100) {$5$} }
        child[missing]
      }
      child { node (11) {$6$}
        child[missing]
        child { node (111) {$7$} }
      }
    };
\end{tikzpicture}.
\end{equation}
A binary search tree is \defterm{standard} if the labels of its nodes are exactly the integers from $1$ to the
number of nodes in the tree. The \defterm{shape} of a binary search tree $T$, denoted $\Sh{T}$, is simply its underlying
unlabelled rooted binary tree. The notion of shape will be important later in the paper.

A \defterm{decreasing tree} is a labelled rooted binary tree where the label of each node is greater than the label of
its children. An \defterm{increasing tree} is a labelled rooted binary tree where the label of each node is less than
the label of its children. The following are examples of, respectively, standard increasing and standard decreasing
trees:
\begin{equation}
\label{eq:incdectreeeg}
\begin{tikzpicture}[tinybst,baseline=-10mm]
  \node (root) {$1$}
    child[sibling distance=16mm] { node (0) {$2$}
      child { node (00) {$4$}
        child[missing]
        child { node (001) {$7$}
          child[missing]
          child { node (0011) {$9$} }
        }
      }
      child { node (01) {$10$} }
    }
    child[sibling distance=16mm] { node (1) {$3$}
      child[missing]
      child { node (11) {$5$}
        child { node (110) {$6$}
          child { node (1100) {$8$} }
          child[missing]
        }
        child[missing]
      }
    };
\end{tikzpicture}
\quad
\begin{tikzpicture}[tinybst,baseline=-10mm]
  \node (root) {$10$}
    child[sibling distance=16mm] { node (0) {$9$}
      child { node (00) {$7$}
        child { node (000) {$4$} }
        child[missing]
      }
      child { node (01) {$2$} }
    }
    child[sibling distance=16mm] { node (1) {$8$}
      child { node (10) {$3$}
        child { node (100) {$1$} }
        child[missing]
      }
      child { node (11) {$6$}
        child[missing]
        child { node (111) {$5$} }
      }
    };
\end{tikzpicture}.
\end{equation}
The \defterm{left-to-right infix traversal} (or simply the \defterm{infix traversal}) of a rooted binary tree $T$ is the
sequence that `visits' every node in the tree as follows: it recursively performs the infix traversal of
the left subtree of the root of $T$, then visits the root of $T$, then recursively performs the infix
traversal of the right subtree of the root of $T$. Thus the infix traversal of any binary tree with the same
shape as the right-hand tree in \eqref{eq:bsteg} visits nodes as follows:
\begin{equation}
\begin{tikzpicture}[tinybst,baseline=-7.5mm]
  \node (root) {}
    child[sibling distance=16mm] { node (0) {}
      child { node (00) {}
        child { node (000) {} }
        child[missing]
      }
      child { node (01) {} }
    }
    child[sibling distance=16mm] { node (1) {}
      child { node (10) {}
        child { node (100) {} }
        child[missing]
      }
      child { node (11) {}
        child[missing]
        child { node (111) {} }
      }
    };
  \begin{scope}[very thick,line cap=round]
    \draw (000.center) edge[bend left=30] (00.center);
    \draw (00.center) edge[bend left=30] (0.center);
    \draw (0.center) edge[bend right=30] (01.center);
    \draw (01.center) edge[bend left=20] (root.center);
    \draw (root.center) edge[bend left=10] (100.center);
    \draw (100.center) edge[bend right=30] (10.center);
    \draw (10.center) edge[bend right=30] (1.center);
    \draw (1.center) edge[bend left=30] (11.center);
    \draw (11.center) edge[bend left=30] (111.center);
  \end{scope}
  \draw[very thick] ($ (000.center) + (-4mm,0) $) -- (000.center);
  \draw[very thick,->] (111.center) -- ($ (111.center) + (4mm,0) $);
  \foreach\x in {000,00,01,0,100,10,111,11,1,root} {
    \draw[draw=black,fill=black] (\x.center) circle (.66mm);
  }
\end{tikzpicture}.
\end{equation}
The \defterm{left-to-right postfix traversal}, or simply the \defterm{postfix traversal}, of a rooted binary tree $T$ is
the sequence that `visits' every node in the tree as follows: it recursively perform the postfix traversal
of the left subtree of the root of $T$, then recursively perform the postfix traversal of the right
subtree of the root of $T$, then visits the root of $T$. Thus the postfix traversal of any binary tree
with the same shape as the right-hand tree in \eqref{eq:bsteg} visits nodes as follows:
\begin{equation}
\begin{tikzpicture}[tinybst,baseline=-7.5mm]
  \node (root) {}
    child[sibling distance=16mm] { node (0) {}
      child { node (00) {}
        child { node (000) {} }
        child[missing]
      }
      child { node (01) {} }
    }
    child[sibling distance=16mm] { node (1) {}
      child { node (10) {}
        child { node (100) {} }
        child[missing]
      }
      child { node (11) {}
        child[missing]
        child { node (111) {} }
      }
    };
  \begin{scope}[very thick,line cap=round]
    \draw (000.center) edge[bend left=30] (00.center);
    \draw (00.center) edge[bend right=30] (01.center);
    \draw (01.center) edge[bend left=30] (0.center);
    \draw (0.center) edge[bend left=40] (100.center);
    \draw (100.center) edge[bend right=30] (10.center);
    \draw (10.center) edge[bend left=10] (111.center);
    \draw (111.center) edge[bend right=30] (11.center);
    \draw (11.center) edge[bend right=30] (1.center);
    \draw (1.center) edge[bend right=30] (root.center);
  \end{scope}
  \draw[very thick] ($ (000.center) + (-4mm,0) $) -- (000.center);
  \draw[very thick,->] (root.center) -- ($ (root.center) + (0,4mm) $);
  \foreach\x in {000,00,01,0,100,10,111,11,1,root} {
    \draw[draw=black,fill=black] (\x.center) circle (.66mm);
  }
\end{tikzpicture}.
\end{equation}
The \defterm{infix reading} $\inreading{T}$ (respectively, \defterm{postfix reading} $\postreading{T}$) of a labelled
binary search tree $T$ is defined to be the word obtained by listing the labels of the nodes visited during the infix
(respectively, postfix) traversal. For example, the infix and postfix readings of the
right-hand tree in \eqref{eq:bsteg} are, respectively, $1124455567$ and $1142557654$. Note that the weights of the
infix and postfix readings of a given binary search tree are the same; both weights are simply the tuple that
describes the number of each symbol in $\aA$ appearing in the tree. Thus define the \defterm{weight} of a labelled
binary search tree to be the weight of these readings.

The following result is immediate from  the definition of a binary search tree, but it is used frequently:

\begin{proposition}
  \label{prop:infixreading}
  For any \textparens{left or right strict} binary search tree $T$, the infix reading $\inreading{T}$ is always the
  unique weakly increasing word with the same weight as $T$.
\end{proposition}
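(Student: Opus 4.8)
The plan is to reduce the proposition to one trivial observation and one short induction. First, note that a weakly increasing word over the totally ordered alphabet $\aA$ is completely determined by its weight: the weight records how many times each symbol occurs, and there is exactly one arrangement of a given multiset of symbols into weakly increasing order. Hence the phrase ``the unique weakly increasing word with the same weight as $T$'' is well-posed, and the whole statement reduces to showing that $\inreading{T}$ is weakly increasing and has weight $\wt{T}$.

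The claim about the weight is immediate from the definitions: the infix traversal visits every node of $T$ exactly once, so $\inreading{T}$ lists precisely the multiset of node labels of $T$, which is exactly what $\wt{T}$ records.

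For the monotonicity I would induct on the number of nodes of $T$. If $T$ is empty or has a single node, then $\inreading{T}$ is the empty word or a single letter, hence weakly increasing. Otherwise, let $r$ be the label of the root of $T$ and let $L$ and $R$ be its left and right subtrees; these are again binary search trees of the same type (left strict or right strict), so by the inductive hypothesis $\inreading{L}$ and $\inreading{R}$ are weakly increasing, and by the definition of the infix traversal $\inreading{T} = \inreading{L}\, r\, \inreading{R}$. To conclude it only remains to check that, when $L$ is nonempty, the last letter of $\inreading{L}$ is at most $r$, and, when $R$ is nonempty, that $r$ is at most the first letter of $\inreading{R}$. But these letters are labels of nodes of $L$ and of $R$ respectively, and the defining condition of a binary search tree supplies exactly what is needed: every label in the left subtree is $\le r$ (strictly less in the left-strict case), and every label in the right subtree is $\ge r$ (strictly greater in the right-strict case). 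This completes the induction.

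I do not anticipate a real obstacle here; the only points requiring a little care are keeping track of both flavours of binary search tree (observing that in each case the defining order conditions yield the weak inequalities $\le r$ on the left and $\ge r$ on the right that the argument uses) and stating the two ``junction'' inequalities only when the corresponding subtree is nonempty.
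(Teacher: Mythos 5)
Your proof is correct. The paper offers no proof at all---it simply asserts that the result is immediate from the definition of a binary search tree---and your induction on the number of nodes, using the order condition at the root to glue the readings of the two subtrees, is precisely the natural formalization of that observation.
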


Let $T$ be a rooted binary tree and let $x_1,\ldots,x_m$ be the nodes of $T$ in the order visited in an infix traversal. Let
$i_1,\ldots,i_{k-1}$ be such that the $x_{i_h}$ are precisely the nodes of $T$ that have non-empty left subtrees. The
\defterm{left interval partition} of $T$ is the partition
\[
\set[\Big]{
\underbrace{\set{x_1,\ldots,x_{i_1-1}}}_{\text{$1$st left interval}},
\underbrace{\set{x_{i_1},\ldots,x_{i_2-1}}}_{\text{$2$nd left interval}},
\ldots,
\underbrace{\set{x_{i_{k-2}},\ldots,x_{i_{k-1}-1}}}_{\text{$k-1$-th left interval}},
\underbrace{\set{x_{k-1},\ldots,x_{m}}}_{\text{$k$-th left interval}}
};
\]
the $h$-th part of this partition is the \defterm{$h$-th left interval} of $T$. Let $j_1,\ldots,j_{\ell-1}$ be such that
the $x_{j_h}$ are precisely the nodes of $T$ that have non-empty right subtrees. The \defterm{right interval partition}
of $T$ is the partition
\[
\set[\Big]{
\underbrace{\set{x_1,\ldots,x_{j_1}}}_{\text{$1$st right interval}},
\underbrace{\set{x_{j_1+1},\ldots,x_{j_2}}}_{\text{$2$nd right interval}},
\ldots,
\underbrace{\set{x_{j_{\ell-2}+1},\ldots,x_{j_{\ell-1}}}}_{\text{$\ell-1$-th right interval}},
\underbrace{\set{x_{\ell-1},\ldots,x_{m}}}_{\text{$\ell$-th right interval}}
};
\]
the $h$-th part of this partition is the \defterm{$h$-th right interval} of $T$. Consider the left and right intervals
of a binary tree with the same shape as the right-hand tree in \eqref{eq:bsteg}. Then the nodes lying in the left
intervals and right intervals of this tree are, respectively, those linked by solid lines in the left-hand and
right-hand diagrams below:
\begin{equation}
\begin{tikzpicture}[baseline=-7.5mm]
  \begin{scope}[tinybst]
    \node (root) {}
      child[sibling distance=16mm] { node (0) {}
        child { node (00) {}
          child { node (000) {} }
          child[missing]
        }
        child { node (01) {} }
      }
      child[sibling distance=16mm] { node (1) {}
        child { node (10) {}
          child { node (100) {} }
          child[missing]
        }
        child { node (11) {}
          child[missing]
          child { node (111) {} }
        }
      };
  \end{scope}
  \begin{scope}[very thick,line cap=round]
    \draw (111.center) -- (11.center) -- (1.center);
    \draw (100.center) -- (root.center);
    \draw (01.center) -- (0.center);
  \end{scope}
  \foreach\x in {000,00,01,0,100,10,111,11,1,root} {
    \draw[draw=black,fill=black] (\x.center) circle (.66mm);
  }
  \begin{scope}[every node/.style={font=\scriptsize,outer sep=1.5mm}]
    \node[anchor=east] at (000) {$1$st};
    \node[anchor=east] at (00) {$2$nd};
    \node[anchor=east] at (0) {$3$rd};
    \node[anchor=east] at (root) {$4$th};
    \node[anchor=north,xshift=3mm,yshift=.5mm] at (10) {$5$th};
    \node[anchor=west] at (1) {$6$th};
  \end{scope}
\end{tikzpicture}
\quad
\begin{tikzpicture}[baseline=-7.5mm]
  \begin{scope}[tinybst]
    \node (root) {}
      child[sibling distance=16mm] { node (0) {}
        child { node (00) {}
          child { node (000) {} }
          child[missing]
        }
        child { node (01) {} }
      }
      child[sibling distance=16mm] { node (1) {}
        child { node (10) {}
          child { node (100) {} }
          child[missing]
        }
        child { node (11) {}
          child[missing]
          child { node (111) {} }
        }
      };
  \end{scope}
  \begin{scope}[very thick,line cap=round]
    \draw (000.center) -- (00.center) -- (0.center);
    \draw (01.center) -- (root.center);
    \draw (100.center) -- (10.center) -- (1.center);
  \end{scope}
  \foreach\x in {000,00,01,0,100,10,111,11,1,root} {
    \draw[draw=black,fill=black] (\x.center) circle (.66mm);
  }
  \begin{scope}[every node/.style={font=\scriptsize,outer sep=1.5mm}]
    \node[anchor=east] at (000) {$1$st};
    \node[anchor=north,xshift=-3mm,yshift=.5mm] at (01) {$2$nd};
    \node[anchor=west] at (100) {$3$rd};
    \node[anchor=west] at (11) {$4$th};
    \node[anchor=west] at (111) {$5$th};
  \end{scope}
\end{tikzpicture}
\end{equation}
In a sense, the lengths of the left and right intervals of a binary seach tree are analogous to the lengths of the rows
in a quasi-ribbon tableau. However, the lengths of left and right intervals do not determine the shape of the tree, for the trees
\[
\begin{tikzpicture}[tinybst,baseline=-6mm]
  \node (root) {}
    child { node (0) {} }
    child { node (1) {}
      child { node (10) {} }
      child[missing]
    };
\end{tikzpicture}
\text{ and }
\begin{tikzpicture}[tinybst,baseline=-6mm]
  \node (root) {}
  child { node (0) {} }
  child { node (1) {}
    child { node (10) {} }
    child[missing]
  };
\end{tikzpicture}
\]
both have left interval length $2,2$ and right interval lengths $1,2,1$:
\[
\begin{tikzpicture}[tinybst,baseline=-6mm]
  \node (root) {}
    child { node (0) {} }
    child { node (1) {}
      child { node (10) {} }
      child[missing]
    };
  \begin{scope}[very thick,line cap=round]
    \draw (0.center) -- (root.center);
    \draw (10.center) -- (1.center);
  \end{scope}
  \foreach\x in {0,10,1,root} {
    \draw[draw=black,fill=black] (\x.center) circle (.66mm);
  }
\end{tikzpicture}
\quad
\begin{tikzpicture}[tinybst,baseline=-6mm]
  \node (root) {}
    child { node (0) {} }
    child { node (1) {}
      child { node (10) {} }
      child[missing]
    };
  \begin{scope}[very thick,line cap=round]
    \draw (10.center) -- (root.center);
  \end{scope}
  \foreach\x in {0,10,1,root} {
    \draw[draw=black,fill=black] (\x.center) circle (.66mm);
  }
\end{tikzpicture}
\;;\qquad
\begin{tikzpicture}[tinybst,baseline=-6mm]
  \node (root) {}
    child { node (0) {}
      child { node (00) {} }
      child { node (01) {} }
    }
    child[missing];
  \begin{scope}[very thick,line cap=round]
    \draw (00.center) -- (0.center);
    \draw (01.center) -- (root.center);
  \end{scope}
  \foreach\x in {00,01,0,root} {
    \draw[draw=black,fill=black] (\x.center) circle (.66mm);
  }
\end{tikzpicture}
\quad
\begin{tikzpicture}[tinybst,baseline=-6mm]
  \node (root) {}
    child { node (0) {}
      child { node (00) {} }
      child { node (01) {} }
    }
    child[missing];
  \begin{scope}[very thick,line cap=round]
    \draw (01.center) -- (0.center);
  \end{scope}
  \foreach\x in {00,01,0,root} {
    \draw[draw=black,fill=black] (\x.center) circle (.66mm);
  }
\end{tikzpicture}.
\]

The \defterm{canopy} of a binary tree $T$, denoted $\cnp{T}$ is the word over $\set{0,1}$ obtained by doing an infix
traversal of the nodes of $T$ and outputting a $1$ when an empty left subtree is encountered and $0$ when an empty right
subtree is encountered, then omitting the $1$ from the start and the $0$ from the end of the resulting word. (These
symbols correspond to the empty left subtree of the leftmost node and the empty right subtree of the rightmost node.)
For example, the canopies of the two binary trees in \eqref{eq:bsteg} are, respectively, $110101100$ and $001010011$:
\begin{equation}
  \label{eq:twinbsteg}
  \begin{tikzpicture}[baseline=-10mm]
    \begin{scope}[tinybst]
      \node (root) {$5$}
      child[sibling distance=16mm] { node (0) {$4$}
        child { node (00) {$1$}
          child[missing]
          child { node (001) {$1$}
            child[missing]
            child { node (0011) {$2$} }
          }
        }
        child { node (01) {$4$} }
      }
      child[sibling distance=16mm] { node (1) {$5$}
        child[missing]
        child { node (11) {$7$}
          child { node (110) {$6$}
            child { node (1100) {$5$} }
            child[missing]
          }
          child[missing]
        }
      };
    \end{scope}
    \begin{scope}[every node/.style={font=\tiny}]
      \node at ($ (001) + (-2.5mm,-2mm) $) {$1$};
      \node at ($ (0011) + (-2.5mm,-2mm) $) {$1$};
      \node at ($ (0011) + (2.5mm,-2mm) $) {$0$};
      \node at ($ (01) + (-2.5mm,-2mm) $) {$1$};
      \node at ($ (01) + (2.5mm,-2mm) $) {$0$};
      \node at ($ (1) + (-2.5mm,-2mm) $) {$1$};
      \node at ($ (1100) + (-2.5mm,-2mm) $) {$1$};
      \node at ($ (1100) + (2.5mm,-2mm) $) {$0$};
      \node at ($ (110) + (2.5mm,-2mm) $) {$0$};
    \end{scope}
  \end{tikzpicture}
  \quad
  \begin{tikzpicture}[baseline=-10mm]
    \begin{scope}[tinybst]
      \node (root) {$4$}
      child[sibling distance=16mm] { node (0) {$2$}
        child { node (00) {$1$}
          child { node (000) {$1$} }
          child[missing]
        }
        child { node (01) {$4$} }
      }
      child[sibling distance=16mm] { node (1) {$5$}
        child { node (10) {$5$}
          child { node (100) {$5$} }
          child[missing]
        }
        child { node (11) {$6$}
          child[missing]
          child { node (111) {$7$} }
        }
      };
    \end{scope}
    \begin{scope}[every node/.style={font=\tiny}]
      \node at ($ (000) + (2.5mm,-2mm) $) {$0$};
      \node at ($ (00) + (2.5mm,-2mm) $) {$0$};
      \node at ($ (01) + (-2.5mm,-2mm) $) {$1$};
      \node at ($ (01) + (2.5mm,-2mm) $) {$0$};
      \node at ($ (100) + (-2.5mm,-2mm) $) {$1$};
      \node at ($ (100) + (2.5mm,-2mm) $) {$0$};
      \node at ($ (10) + (2.5mm,-2mm) $) {$0$};
      \node at ($ (11) + (-2.5mm,-2mm) $) {$1$};
      \node at ($ (111) + (-2.5mm,-2mm) $) {$1$};
    \end{scope}
  \end{tikzpicture}.
\end{equation}

Let $T_L$ and $T_R$ be binary trees. Then $(T_L,T_R)$ is a \defterm{pair of twin binary trees} if $\cnp{T_L}$ and
$\cnp{T_R}$ are complementary, in the sense that for all $i$, the $i$-th symbols of $\cnp{T_L}$ and $\cnp{T_R}$ are
unequal.

Now let $T_L$ be a left strict binary search tree and let $T_R$ be a right strict binary search tree. Then $(T_L,T_R)$
is a \defterm{pair of twin binary search trees} if $\inreading{T_L} = \inreading{T_R}$ and $\cnp{T_L}$ and $\cnp{T_R}$
are complementary (that is, the underlying binary trees form a pair of twin binary trees). Note that the binary search
trees in \eqref{eq:bsteg} form a pair of twin binary search trees, since they have the same infix reading (that is,
$1124455567$), and complementary canopies, as shown in \eqref{eq:twinbsteg}. The \defterm{shape} of a pair of twin
binary search trees $(T_L,T_R)$, denoted $\Sh{T_L,T_R}$, is simply the underlying pair of unlabelled rooted binary
trees.

The insertion algorithms for right (respectively, left) strict binary search trees adds the new symbol as a leaf node in the
unique place that maintains the property of being a right (respectively, left) strict binary search tree.

\begin{algorithm}[Left strict leaf insertion]
\label{alg:leftstrictinsertone}
~\par\nobreak
\textit{Input:} A left strict binary search tree $T$ and a symbol $a \in \aA$.

\textit{Output:} A left strict binary search tree $T \leftarrow a$.

\textit{Method:} If $T$ is empty, create a node and label it $a$. If $T$ is non-empty, examine the label $x$ of the root
node; if $a \geq x$, recursively insert $a$ into the right subtree of the root node; otherwise recursively insert $a$
into the left subtree of the root note. Output the resulting tree.
\end{algorithm}

\begin{algorithm}[Right strict leaf insertion]
\label{alg:rightstrictinsertone}
~\par\nobreak
\textit{Input:} A right strict binary search tree $T$ and a symbol $a \in \aA$.

\textit{Output:} A right strict binary search tree $a \rightarrow T$.

\textit{Method:} If $T$ is empty, create a node and label it $a$. If $T$ is non-empty, examine the label $x$ of the root
node; if $a \leq x$, recursively insert $a$ into the left subtree of the root node; otherwise recursively insert $a$
into the right subtree of the root note. Output the resulting tree.
\end{algorithm}

Using the leaf insertion algorithms described above, we can compute from a word in $\aA^*$ a left (respectively, right)
binary search tree.

\begin{algorithm}[Left strict insertion]
\label{alg:leftstrictinsert}
~\par\nobreak
\textit{Input:} A word $a_1\cdots a_k$, where $a_i \in \aA$.

\textit{Output:} A left strict binary search tree $\ltree{a_1\cdots a_k}$ and a standard increasing tree $\lrectree{a_1\cdots a_k}$.

\textit{Method:} Start with the empty binary search tree $T_0$ and an empty increasing tree $D_0$. For each
$i = 1,\ldots,k$, insert $a_{i}$ into $T_{i-1}$ as per \fullref{Algorithm}{alg:leftstrictinsertone}; let $T_i$ be the
resulting binary search tree. Build a increasing tree $D_i$ that has the same shape as $T_i$ by adding a node labelled
by $i$ to the tree $D_{i-1}$ in the same place as $a_i$ was inserted into $T_{i-1}$.

Output $T_k$ for $\ltree{a_1\cdots a_k}$ and $D_k$ for $\lrectree{a_1\cdots a_k}$.
\end{algorithm}

\begin{algorithm}[Right strict insertion]
\label{alg:rightstrictinsert}
~\par\nobreak
\textit{Input:} A word $a_1\cdots a_k$, where $a_i \in \aA$.

\textit{Output:} A right strict binary search tree $\rtree{a_1\cdots a_k}$ and a standard decreasing tree
$\rrectree{a_1\cdots a_k}$.

\textit{Method:} Start with the empty binary search tree $T_0$ and an empty decreasing tree $D_0$. For each
$i = 1,\ldots,k$, insert $a_{k-i+1}$ into $T_{i-1}$ as per \fullref{Algorithm}{alg:rightstrictinsertone}; let $T_i$ be
the resulting binary search tree. Build a decreasing tree $D_i$ that has the same shape as $T_i$ by adding a node
labelled by $k-i+1$ to the tree $D_{i-1}$ in the same place as $a_{k-i+1}$ was inserted into $T_{i-1}$.

Output $T_k$ for $\rtree{a_1\cdots a_k}$ and $D_k$ for $\rrectree{a_1\cdots a_k}$.
\end{algorithm}

For example, if $u = 5451761524$, then $\ltree{u}$ and $\rtree{u}$ are respectively the left- and right-hand trees in
\eqref{eq:bsteg}, and $\lrectree{u}$ and $\rrectree{u}$ are respectively the left- and right-hand trees in
\eqref{eq:incdectreeeg}. Note that \fullref{Algorithm}{alg:leftstrictinsert} proceeds through the word from left to
right, while \fullref{Algorithm}{alg:rightstrictinsert} proceeds through the word from right to left.

Let $u \in \aA^*$ be a word that contains no repeated symbols. The \defterm{decreasing tree of $u$}, denoted
$\dectree{u}$, is defined as follows: if $m$ is the maximum symbol that appears in $u$, so that $u = sms'$, then
$\dectree{u}$ has root labelled by $m$, and the left subtree of its root is $\dectree{s}$ and the right subtree of the
root is $\dectree{s'}$. That is,
\[
\dectree{u} = \dectree[\big]{sms'} =
\begin{tikzpicture}[smallbst,level distance=14mm,baseline=-5mm]
  \node (root) {$m$}
  child[sibling distance=15mm] { node[triangle,font=\scriptsize] (0) {$\dectree{s}$} }
  child[sibling distance=15mm] { node[triangle,font=\scriptsize] (1) {$\dectree{s'}$} };
\end{tikzpicture}.
\]

Similarly, the \defterm{increasing tree of $u$}, denoted $\inctree{u}$, is defined as follows: if $n$ is the minimum
symbol that appears in $u$, so that $u = tnt'$, then $\inctree{u}$ has root labelled by $n$, and the left subtree of its
root is $\inctree{t}$ and the right subtree of the root is $\inctree{t'}$. That is,
\[
\inctree{u} = \inctree[\big]{tnt'} =
\begin{tikzpicture}[smallbst,level distance=14mm,baseline=-5mm]
  \node (root) {$n$}
  child[sibling distance=15mm] { node[triangle,font=\scriptsize] (0) {$\inctree{t}$} }
  child[sibling distance=15mm] { node[triangle,font=\scriptsize] (1) {$\inctree{t'}$} };
\end{tikzpicture}
\]

Thus if $u$ is $4792\underline{10}13865$ (where the underline denotes a single symbol in $\aA$ that is not written using
a single decimal digit), then $\inctree{u}$ and $\dectree{u}$ are respectively the left- and right- hand trees in
\eqref{eq:incdectreeeg}.

\begin{remark}
  The infix reading of $\dectree{u}$ is $u$ and the infix reading of $\inctree{u}$ is $u$.
\end{remark}

\begin{proposition}[{\cite[Proposition~4.8]{giraudo_baxter2}}]
\label{prop:charrectrees}
For any word $u \in \aA^*$,
\begin{enumerate}
\item $\lrectree{u} = \inctree{\std{u}^{-1}}$;
\item $\rrectree{u} = \dectree{\std{u}^{-1}}$;
\end{enumerate}
and $\parens[\big]{\lrectree{u},\rrectree{u}}$ is a pair of twin binary trees.
\end{proposition}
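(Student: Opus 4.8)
The plan is to reduce to the case of a standard word, and then to identify the recording trees $\lrectree{\sigma}$ and $\rrectree{\sigma}$ of a permutation $\sigma$ with $\inctree{\sigma^{-1}}$ and $\dectree{\sigma^{-1}}$ by comparing their `descendant-with-side' relations. For the reduction: since leaf insertion (\fullref{Algorithm}{alg:leftstrictinsertone} and \fullref{Algorithm}{alg:rightstrictinsertone}) depends only on the relative order of the letters, and its tie-breaking rules ($a\ge x$ goes right in the left strict case, $a\le x$ goes left in the right strict case) are exactly the conventions standardization uses for equal letters, replacing $u$ by $\std{u}$ changes neither the sequence of positions at which new nodes are attached nor the resulting recording trees; hence $\lrectree{u}=\lrectree{\std{u}}$ and $\rrectree{u}=\rrectree{\std{u}}$. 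As $\std{u}$ is already standard, $\std{\std{u}}^{-1}=\std{u}^{-1}$, so it suffices to prove (1) and (2) when $u=\sigma$ is a standard word; then $\sigma^{-1}$ is a permutation word too, so $\inctree{\sigma^{-1}}$ and $\dectree{\sigma^{-1}}$ are defined, and the claim becomes $\lrectree{\sigma}=\inctree{\sigma^{-1}}$ and $\rrectree{\sigma}=\dectree{\sigma^{-1}}$.

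The tool is three elementary facts. (i) In a binary search tree grown by successive leaf insertions of distinct keys, the node inserted for $x$ is an ancestor of the node inserted for $y$ if and only if $x$ was inserted before every key whose value lies between those of $x$ and $y$, and such a descendant $y$ lies in the left subtree of $x$ precisely when its key is the smaller one. (ii) In $\dectree{v}$, for $v=v_1\cdots v_m$ with distinct letters, the node at position $q$ is an ancestor of the node at position $p$ if and only if $v_q$ is the largest of the entries $v_\ell$ with $\ell$ between $p$ and $q$, and in that case the node at position $p$ lies in the left subtree of the node at position $q$ if and only if $p<q$; the statement for $\inctree{v}$ is the same with `largest' replaced by `smallest'. (iii) A binary tree is determined by its labelled vertex set together with, for each vertex, the set of vertices in its left subtree and the set in its right subtree. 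Facts (i) and (ii) follow by straightforward inductions on the number of nodes, and (iii) by repeatedly peeling off the root (the unique non-descendant) and its two subtree vertex-sets.

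Now the comparison. In \fullref{Algorithm}{alg:rightstrictinsert} the letter in position $j$ of $\sigma$ receives recording label $j$ and positions are processed from right to left, so `position $j$ is inserted before position $j'$' means $j>j'$. Feeding this into (i): the node labelled $j$ is an ancestor of the node labelled $j'$ in $\rrectree{\sigma}$ exactly when $j$ is the largest position holding a value in the interval between $\sigma_j$ and $\sigma_{j'}$, and then $j'$ is a left-descendant iff $\sigma_{j'}<\sigma_j$. On the other side, the value $m$ occupies position $\sigma_m$ in the word $\sigma^{-1}$, and the entries of $\sigma^{-1}$ in the block of positions between $\sigma_m$ and $\sigma_{m'}$ are precisely the $\sigma$-positions of the values lying between $\sigma_m$ and $\sigma_{m'}$; so (ii) gives that $m$ is an ancestor of $m'$ in $\dectree{\sigma^{-1}}$ under the identical condition and with the identical left/right split. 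Hence $\rrectree{\sigma}=\dectree{\sigma^{-1}}$ by (iii). The argument for $\lrectree{\sigma}=\inctree{\sigma^{-1}}$ is symmetric, \fullref{Algorithm}{alg:leftstrictinsert} processing positions from left to right so that `inserted before' becomes `smaller position', matching the `smallest' in the characterisation of $\inctree{\sigma^{-1}}$.

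For the twin property, I would use the remark that $\inreading{\dectree{v}}=v=\inreading{\inctree{v}}$ together with the observation that, for any binary search tree, the $i$-th letter of the canopy is $1$ if and only if the $i$-th node visited by the infix traversal has a non-empty right subtree. In $\dectree{v}$ the $(i{+}1)$-th infix node is the node at position $i+1$, which lies in the right subtree of the node at position $i$ exactly when $v_i>v_{i+1}$; in $\inctree{v}$ the corresponding condition is $v_i<v_{i+1}$. Thus $\cnp{\dectree{v}}$ records the descents of $v$ and $\cnp{\inctree{v}}$ records its ascents, and these are complementary because $v$ has distinct entries; taking $v=\std{u}^{-1}$ shows $(\lrectree{u},\rrectree{u})=(\inctree{\std{u}^{-1}},\dectree{\std{u}^{-1}})$ is a pair of twin binary trees. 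The place I expect the real work to lie is the bookkeeping between the three registers --- positions in $\sigma$, positions in $\sigma^{-1}$, and letter values --- and checking the tie-breaking reduction and the canopy--descent correspondence carefully against the exact conventions fixed in this section; facts (i)--(iii) themselves are routine. A purely inductive alternative, recursing on $\wlen{u}$ via the `split at the extreme letter' definitions of $\dectree{\cdot}$ and $\inctree{\cdot}$, is possible but seems to require more delicate tracking of how standardization interacts with passing to sub-words.
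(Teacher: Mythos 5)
The paper does not prove this proposition at all: it is imported verbatim from Giraudo (the bracketed citation \textbf{is} the paper's ``proof''), so there is no in-paper argument to compare yours against. Judged on its own, your argument is correct and self-contained. The reduction to standard words is sound because the tie-breaking in \fullref{Algorithms}{alg:leftstrictinsertone} and \ref{alg:rightstrictinsertone} ($a\ge x$ goes right, $a\le x$ goes left) agrees with the left-to-right convention of standardization given the order in which each algorithm processes the word, so the insertion paths, and hence the recording trees, are unchanged. Your facts (i)--(iii) are the classical characterizations (the BST-ancestor criterion, the Cartesian-tree/max-of-an-interval criterion for $\dectree{\cdot}$ and $\inctree{\cdot}$, and reconstruction from the signed ancestor relation), and the bookkeeping translating ``inserted earlier'' into ``larger position'' (right strict, read right to left) versus ``smaller position'' (left strict, read left to right) correctly matches ``largest'' for $\dectree{\std{u}^{-1}}$ and ``smallest'' for $\inctree{\std{u}^{-1}}$; I checked the left/right-subtree conditions on both sides and they agree. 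The canopy--descent argument for the twin property is also correct and is consistent with the worked example in \eqref{eq:twinbsteg}. The only point needing care when you write out fact (i) in full is the inclusivity of ``between'': the node for $x$ is an ancestor of the node for $y$ iff $x$ is inserted before every key whose value lies in the closed interval from $x$ to $y$ \emph{other than $x$ itself} (so in particular before $y$); with that made explicit the proof goes through.
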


(Note that since $\std{u}$ is a standard word, it can be viewed as a permutation in one-line notation, and thus its
inverse is defined.)

Before proceeding to define the sylvester and Baxter monoids, it is first necessary to prove two technical lemmata:

\begin{lemma}
  \label{lem:decinctreenewmaxshape}
  Let $\alpha,\alpha',\beta,\beta' \in \aA^*$ and $k \in \aA$ be such that $\abs{\alpha} = \abs{\alpha'}$,
  $\abs{\beta} = \abs{\beta'}$, the symbol $k$ is greater than every symbol in $\alpha,\alpha',\beta,\beta'$, and
  $\alpha k\beta$ and $\alpha' k\beta'$ are standard words.
  \begin{enumerate}
  \item If the decreasing trees $\dectree{\alpha\beta}$ and $\dectree{\alpha'\beta'}$ have the same shape as each other,
    then the decreasing trees $\dectree{\alpha k\beta}$ and $\dectree{\alpha'k\beta'}$ are of the same shape as each other.
  \item If the increasing trees $\inctree{\alpha\beta}$ and $\inctree{\alpha'\beta'}$ have the same shape as each other,
    then the increasing trees $\inctree{\alpha k\beta}$ and $\inctree{\alpha'k\beta'}$ are of the same shape as each other.
  \end{enumerate}
\end{lemma}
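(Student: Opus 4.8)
The plan is to handle both parts by the same device---peeling off the globally extremal symbol---but to run the induction on plain word length, using only that the words involved have no repeated symbols rather than keeping the factors standard, since the subwords that arise in the recursion are not standard. (Recall that the shape of $\dectree{u}$ or $\inctree{u}$ depends only on the relative order of the symbols of $u$.) For part~(1), since $\alpha k\beta$ is standard and $k$ exceeds every symbol of $\alpha$ and $\beta$, the symbol $k$ is the maximum of $\alpha k\beta$ and sits in position $\abs{\alpha}+1$; hence $\dectree{\alpha k\beta}$ has root $k$, left subtree $\dectree{\alpha}$ and right subtree $\dectree{\beta}$, and likewise $\dectree{\alpha' k\beta'}$ has root $k$, left subtree $\dectree{\alpha'}$ and right subtree $\dectree{\beta'}$. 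So part~(1) reduces to the \emph{Claim}: if $\gamma\delta$ and $\gamma'\delta'$ have no repeated symbols, $\abs{\gamma}=\abs{\gamma'}$, $\abs{\delta}=\abs{\delta'}$ and $\Sh{\dectree{\gamma\delta}}=\Sh{\dectree{\gamma'\delta'}}$, then $\Sh{\dectree{\gamma}}=\Sh{\dectree{\gamma'}}$ and $\Sh{\dectree{\delta}}=\Sh{\dectree{\delta'}}$.

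I would prove the Claim by induction on $\abs{\gamma\delta}$. Let $M$, $M'$ be the maxima of $\gamma\delta$, $\gamma'\delta'$ (these need not be equal as symbols---only their positions matter). The root of $\dectree{\gamma\delta}$ is $M$ and its left subtree has size one less than the position of $M$ in $\gamma\delta$, so comparing with $\dectree{\gamma'\delta'}$ forces $M$ and $M'$ into the same position $p$. If $p\le\abs{\gamma}$, write $\gamma=\gamma_1 M\gamma_2$, $\gamma'=\gamma_1' M'\gamma_2'$ with $\abs{\gamma_1}=\abs{\gamma_1'}=p-1$ (so $\abs{\gamma_2}=\abs{\gamma_2'}$); then $\dectree{\gamma\delta}$ decomposes as (root $M$; left subtree $\dectree{\gamma_1}$; right subtree $\dectree{\gamma_2\delta}$) and similarly for the primed words, whence $\Sh{\dectree{\gamma_1}}=\Sh{\dectree{\gamma_1'}}$ and $\Sh{\dectree{\gamma_2\delta}}=\Sh{\dectree{\gamma_2'\delta'}}$. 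The induction hypothesis applied to $\gamma_2\delta$ and $\gamma_2'\delta'$ (shorter, since $M$ was removed, and still with no repeated symbols) gives $\Sh{\dectree{\gamma_2}}=\Sh{\dectree{\gamma_2'}}$ and $\Sh{\dectree{\delta}}=\Sh{\dectree{\delta'}}$; and since $M$ is the maximum of $\gamma$ and $M'$ of $\gamma'$, $\dectree{\gamma}$ decomposes as (root $M$; $\dectree{\gamma_1}$; $\dectree{\gamma_2}$), so the matching subtree shapes yield $\Sh{\dectree{\gamma}}=\Sh{\dectree{\gamma'}}$. The case $p>\abs{\gamma}$ is symmetric, splitting $\delta$ at $M$ instead.

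For part~(2) there is no such shortcut: $k$ is still the maximum, but in an increasing tree the maximum is a leaf, not the root. Instead I would prove directly, by induction on $\abs{\alpha\beta}$, the statement obtained by replacing ``standard'' with ``no repeated symbols'': if $\gamma\delta$ and $\gamma'\delta'$ have no repeated symbols, $k$ exceeds every symbol of $\gamma\delta$ and of $\gamma'\delta'$, $\abs{\gamma}=\abs{\gamma'}$, $\abs{\delta}=\abs{\delta'}$ and $\Sh{\inctree{\gamma\delta}}=\Sh{\inctree{\gamma'\delta'}}$, then $\Sh{\inctree{\gamma k\delta}}=\Sh{\inctree{\gamma' k\delta'}}$. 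Here one peels off the \emph{minimum}: if $m$, $m'$ are the minima of $\gamma\delta$, $\gamma'\delta'$, then $m$ is also the minimum of $\gamma k\delta$ (as $k$ is larger than everything in $\gamma\delta$), and the shape equality forces $m$, $m'$ into the same position $p$. If $p\le\abs{\gamma}$, write $\gamma=\gamma_1 m\gamma_2$, $\gamma'=\gamma_1' m'\gamma_2'$ with $\abs{\gamma_1}=\abs{\gamma_1'}=p-1$; then $\inctree{\gamma\delta}$ decomposes as (root $m$; $\inctree{\gamma_1}$; $\inctree{\gamma_2\delta}$), giving $\Sh{\inctree{\gamma_1}}=\Sh{\inctree{\gamma_1'}}$ and $\Sh{\inctree{\gamma_2\delta}}=\Sh{\inctree{\gamma_2'\delta'}}$, and the induction hypothesis applied to $\gamma_2\delta$, $\gamma_2'\delta'$ (same $k$, shorter) gives $\Sh{\inctree{\gamma_2 k\delta}}=\Sh{\inctree{\gamma_2' k\delta'}}$; since $\inctree{\gamma k\delta}$ decomposes as (root $m$; $\inctree{\gamma_1}$; $\inctree{\gamma_2 k\delta}$) and similarly for the primed words, this yields the conclusion. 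The case $p>\abs{\gamma}$ is symmetric, splitting $\delta=\delta_1 m\delta_2$ and keeping $k$ in the left factor $\gamma\delta_1$.

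The main obstacle is getting the inductive framework right. The naive attempt to induct on $\abs{\alpha\beta}$ while keeping $\alpha$, $\beta$ standard fails, because the subwords $\gamma_2\delta$, $\gamma\delta_1$, and so on, appearing in the recursion are not standard; one must instead work with the weaker ``no repeated symbols'' hypothesis and lean on the fact that tree shapes depend only on relative order. A secondary point requiring care is that $\gamma\delta$ and $\gamma'\delta'$ need not share the same maximum (or minimum) value, so the case split must be phrased in terms of the \emph{positions} of the extremal symbols rather than their values. Once this is set up, both inductions are routine, essentially by matching up the root and the two subtrees at each step.
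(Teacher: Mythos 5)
Your proof is correct, but it takes a genuinely different route from the paper's. The paper gives a non-inductive, purely structural argument: it looks at the last symbol $c$ of $\alpha$, splits into cases according to whether $c$ is a left child of $\dectree{\alpha\beta}$ or not, and explicitly exhibits the restructured tree obtained when $k$ is adjoined as the new root (for decreasing trees) or as a new leaf (for increasing trees), verifying in each case that the displayed tree has infix reading $\alpha k\beta$ and the decreasing/increasing property, so that it must be $\dectree{\alpha k\beta}$ or $\inctree{\alpha k\beta}$. Your argument instead runs an induction on word length, peeling off the extremal symbol at each step. For part~(1) your reduction is particularly clean: the observation that $\dectree{\alpha k\beta}$ has root $k$ with subtrees $\dectree{\alpha}$ and $\dectree{\beta}$ turns the statement into the self-contained Claim that $\Sh{\dectree{\gamma\delta}}$ together with $\abs{\gamma}$ determines $\Sh{\dectree{\gamma}}$ and $\Sh{\dectree{\delta}}$, which is of some independent interest. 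You are also right that the induction must be phrased for words with no repeated symbols rather than standard words (the subwords arising in the recursion are not standard), and right to compare positions rather than values of the extremal symbols; these are exactly the points where a naive induction would go wrong. What the paper's approach buys is an explicit picture of how the tree is rearranged (in particular, where the new leaf $k$ lands in the increasing tree), at the cost of a case analysis and a verification that the displayed trees are the correct ones; your approach trades that picture for a uniform inductive mechanism and a reusable auxiliary lemma.
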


\begin{proof}
  \begin{enumerate}
    \item Suppose that the $\abs{\alpha}$-th symbol in $\alpha k\beta$ is $c$. Now consider two cases separately:
    \begin{enumerate}
    \item $c$ is a left child node in $\dectree{\alpha\beta}$. The infix reading of $\dectree{\alpha\beta}$ is
      $\alpha\beta$, so the left subtree of $c$ must have infix reading $\alpha_0$, where $\alpha = \alpha_0c$, and the right
      subtree of $c$ must have infix reading $\beta_0$, where $\beta_0$ is a prefix of $\beta$. Suppose
      $\beta = \beta_0\beta_1$; then $\beta_1$ is the infix reading of the part of the tree above and to the right of
      $c$. That is, $\dectree{\alpha\beta}$ must be as shown in the left-hand side of \eqref{eq:sylvshape1} below.
      \begin{equation}
        \label{eq:sylvshape1}
        \begin{tikzpicture}[baseline=-8mm]
          \begin{scope}[smallbst]
            \node[triangle] (roota) at (0,0) {$\beta_1$}
            child { node (a0) {$c$}
              child { node[triangle] (a00) {$\alpha_0$} }
              child { node[triangle] (a01) {$\beta_0$} }
            }
            child[missing];
          \end{scope}
          \begin{scope}[smallbst]
            \node (rootb) at (5,0) {$k$}
            child { node (b0) {$c$}
              child { node[triangle] (b00) {$\alpha_0$} }
              child[missing]
            }
            child  { node[triangle] (b1) {$\beta_1$}
              child { node[triangle] (b10) {$\beta_0$} }
              child[missing]
            };
          \end{scope}
          \node at ($ (roota) + (-5mm,6.5mm) $) {$\overbrace{\hbox{\vrule width 20mm height 0cm depth 0cm}}$};
          \node at ($ (rootb) + (0mm,6.5mm) $) {$\overbrace{\hbox{\vrule width 25mm height 0cm depth 0cm}}$};
          \node at ($ (roota) + (-5mm,10mm) $) {$\dectree{\alpha\beta}$};
          \node at ($ (rootb) + (0mm,10mm) $) {$\dectree{\alpha k\beta}$};
          \draw[mogrifyarrow] (1,-.8) -- (3,-.8);
        \end{tikzpicture}
      \end{equation}
      The tree on the right-hand side of \eqref{eq:sylvshape1} has infix reading $\alpha k\beta$. (Note that in the
      right-hand tree, the subtree $\beta_0$ attaches to the subtree $\beta_1$ in the same place as $c$ attaches to the
      subtree $\beta_1$ in the left-hand tree.) It is a decreasing tree since $k$ is the maximum symbol and the relative
      order of $c$, $\alpha_0$, $\beta_0$, and $\beta_1$ is correct since $\dectree{\alpha\beta}$ is a decreasing
      tree. Hence the right-hand tree in \eqref{eq:sylvshape1} is $\dectree{\alpha k\beta}$.

      Since $\dectree{\alpha'\beta'}$ has the same shape as $\dectree{\alpha\beta}$ and $\abs{\alpha} = \abs{\alpha'}$,
      the $\abs{\alpha}$-th symbol of $\alpha'\beta'$ is also a left child and so
      $\dectree{\alpha'\beta'}$ has the same shape as the left-hand tree in \eqref{eq:sylvshape1}. Since $k$ is also the
      maximum symbol in $\alpha'k\beta'$, the decreasing tree $\dectree{\alpha'k\beta'}$ has the same shape as the
      right-hand tree in \eqref{eq:sylvshape1}. Thus $\dectree{\alpha k\beta}$ and $\dectree{\alpha' k\beta'}$ have the
      same shape.

    \item $c$ is a right child node or the root node. The infix reading of $\dectree{\alpha\beta}$ is $\alpha\beta$, so
      the left subtree of $c$ must have infix reading $\alpha_1$, where $\alpha_1c$ is a suffix of $\alpha$, and the right
      subtree of $c$ must have infix reading $\beta$. Suppose $\alpha = \alpha_0\alpha_1c$; then $\alpha_0$ is the infix
      reading of the part of the tree above and to the left of $c$. That is, $\dectree{\alpha\beta}$ must be as shown in the
      left-hand side of \eqref{eq:sylvshape2} below.
      \begin{equation}
        \label{eq:sylvshape2}
        \begin{tikzpicture}[baseline=-8mm]
          \begin{scope}[smallbst]
            \node[triangle] (roota) at (0,0) {$\alpha_0$}
            child[missing]
            child { node (a1) {$c$}
              child { node[triangle] (a10) {$\alpha_1$} }
              child { node[triangle] (a11) {$\beta$} }
            };
          \end{scope}
          \begin{scope}[smallbst]
            \node (rootb) at (5,0) {$k$}
            child  { node[triangle] (b0) {$\alpha_0$}
              child[missing]
              child { node (b01) {$c$}
                child { node[triangle] (b010) {$\alpha_1$} }
                child[missing]
              }
            }
            child { node[triangle] (b1) {$\beta$} };
          \end{scope}
          \node at ($ (roota) + (5mm,6.5mm) $) {$\overbrace{\hbox{\vrule width 20mm height 0cm depth 0cm}}$};
          \node at ($ (rootb) + (0mm,6.5mm) $) {$\overbrace{\hbox{\vrule width 25mm height 0cm depth 0cm}}$};
          \node at ($ (roota) + (5mm,10mm) $) {$\dectree{\alpha\beta}$};
          \node at ($ (rootb) + (0mm,10mm) $) {$\dectree{\alpha k\beta}$};
          \draw[mogrifyarrow] (1.5,-.8) -- (3.5,-.8);
        \end{tikzpicture}
      \end{equation}
      The tree on the right-hand side of \eqref{eq:sylvshape2} has infix reading $\alpha k\beta$. It is a decreasing tree
      since $k$ is the maximum symbol and the relative order of $c$, $\alpha_0$, $\alpha_1$, and $\beta$ is correct since
      $\dectree{\alpha\beta}$ is a decreasing tree. Hence the right-hand tree in \eqref{eq:sylvshape2} is
      $\dectree{\alpha k\beta}$.

      Since $\dectree{\alpha'\beta'}$ has the same shape as $\dectree{\alpha\beta}$ and $\abs{\alpha} = \abs{\alpha'}$,
      the $\abs{\alpha}$-th symbol of $\alpha'\beta'$ is also a right child or is also the root node and so $\dectree{\alpha'\beta'}$ has the same shape as
      the left-hand tree in \eqref{eq:sylvshape2}. Since $k$ is also the maximum symbol in $\alpha'k\beta'$, the decreasing
      tree $\dectree{\alpha'k\beta'}$ has the same shape as the right-hand tree in \eqref{eq:sylvshape2}. Thus
      $\dectree{\alpha k\beta}$ and $\dectree{\alpha' k\beta'}$ have the same shape.
    \end{enumerate}

  \item Since $k$ is greater than every symbol in $\alpha\beta$, the increasing tree $\inctree{\alpha k\beta}$ is
    obtained from $\inctree{\alpha\beta}$ by adding $k$ as a leaf node. With the same notation and similar reasoning as
    in case (1), one gets the following equivalents of \eqref{eq:sylvshape1} (when $c$ is a left child node) and
    \eqref{eq:sylvshape2} (when $c$ is a right child node or the root node):
      \begin{align*}
        &\begin{tikzpicture}[baseline=-8mm]
          \begin{scope}[smallbst]
            \node[triangle] (roota) at (0,0) {$\beta_1$}
            child { node (a0) {$c$}
              child { node[triangle] (a00) {$\alpha_0$} }
              child { node[triangle] (a01) {$\beta_0$} }
            }
            child[missing];
          \end{scope}
          \begin{scope}[smallbst]
            \node[triangle] (rootb) at (5,0) {$\beta_1$}
            child { node (b0) {$c$}
              child { node[triangle] (b00) {$\alpha_0$} }
              child { node[triangle] (b01) {$\beta_0$}
                child { node (b010) {$k$} }
                child[missing]
              }
            }
            child[missing];
          \end{scope}
          \node at ($ (roota) + (-5mm,6.5mm) $) {$\overbrace{\hbox{\vrule width 20mm height 0cm depth 0cm}}$};
          \node at ($ (rootb) + (0mm,6.5mm) $) {$\overbrace{\hbox{\vrule width 25mm height 0cm depth 0cm}}$};
          \node at ($ (roota) + (-5mm,10mm) $) {$\inctree{\alpha\beta}$};
          \node at ($ (rootb) + (0mm,10mm) $) {$\inctree{\alpha k\beta}$};
          \draw[mogrifyarrow] (1,-.8) -- (3,-.8);
        \end{tikzpicture}
        \\
        &\begin{tikzpicture}[baseline=-8mm]
          \begin{scope}[smallbst]
            \node[triangle] (roota) at (0,0) {$\alpha_0$}
            child[missing]
            child { node (a1) {$c$}
              child { node[triangle] (a10) {$\alpha_1$} }
              child { node[triangle] (a11) {$\beta$} }
            };
          \end{scope}
          \begin{scope}[smallbst]
            \node[triangle] (rootb) at (5,0) {$\alpha_0$}
            child[missing]
            child { node (b1) {$c$}
              child { node[triangle] (b10) {$\alpha_1$} }
              child { node[triangle] (b11) {$\beta$}
                child { node (b110) {$k$} }
                child[missing]
              }
            };
          \end{scope}
          \node at ($ (roota) + (5mm,6.5mm) $) {$\overbrace{\hbox{\vrule width 20mm height 0cm depth 0cm}}$};
          \node at ($ (rootb) + (0mm,6.5mm) $) {$\overbrace{\hbox{\vrule width 25mm height 0cm depth 0cm}}$};
          \node at ($ (roota) + (5mm,10mm) $) {$\inctree{\alpha\beta}$};
          \node at ($ (rootb) + (0mm,10mm) $) {$\inctree{\alpha k\beta}$};
          \draw[mogrifyarrow] (1.5,-.8) -- (3.5,-.8);
        \end{tikzpicture}
      \end{align*}
      In each case, the right-hand tree is an increasing tree with infix read $\alpha k\beta$, by the fact that $k$ is
      the maximum symbol and the relative order of $c$, $\alpha_0$, $\alpha_1$, and $\beta$ is correct since
      $\inctree{\alpha\beta}$ is an increasing tree. Hence in both cases the right-hand tree is
      $\inctree{\alpha k\beta}$. Thus, by similar reasoning to case~(1), $\inctree{\alpha k\beta}$ and
      $\inctree{\alpha' k\beta'}$ have the same shape.
  \end{enumerate}
\end{proof}

The second technical lemma is simply the dual of the first:

\begin{lemma}
  \label{lem:decinctreenewminshape}
  Let $\alpha,\alpha',\beta,\beta' \in \aA^*$ be such that $\abs{\alpha} = \abs{\alpha'}$,
  $\abs{\beta} = \abs{\beta'}$, and
  $\alpha 1\beta$ and $\alpha' 1\beta'$ are standard words.
  \begin{enumerate}
  \item If the decreasing trees $\dectree{\alpha\beta}$ and $\dectree{\alpha'\beta'}$ have the same shape as each other,
    then the decreasing trees $\dectree{\alpha 1\beta}$ and $\dectree{\alpha'1\beta'}$ are of the same shape as each other.
  \item If the increasing trees $\inctree{\alpha\beta}$ and $\inctree{\alpha'\beta'}$ have the same shape as each other,
    then the increasing trees $\inctree{\alpha 1\beta}$ and $\inctree{\alpha'1\beta'}$ are of the same shape as each other.
  \end{enumerate}
\end{lemma}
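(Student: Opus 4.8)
The plan is to deduce \fullref{Lemma}{lem:decinctreenewminshape} from \fullref{Lemma}{lem:decinctreenewmaxshape} by a symbol-complementation argument, so that none of the tree surgery of the previous proof needs to be repeated. The preliminary fact I would isolate is that relabelling by an order-reversing bijection interchanges decreasing and increasing trees while preserving shape: if $w$ is a word with no repeated symbols and $\phi$ is an order-reversing bijection from the set of symbols occurring in $w$ onto some set of integers, then applying $\phi$ to every node label of $\dectree{w}$ produces exactly $\inctree{\phi(w)}$, and dually applying $\phi$ to $\inctree{w}$ produces $\dectree{\phi(w)}$; in either case the underlying unlabelled binary tree, hence the shape, is unchanged. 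This is an immediate induction on $\abs{w}$ from the recursive definitions: the maximum symbol of $w$ labels the root of $\dectree{w}$, and $\phi$ carries it to the minimum symbol of $\phi(w)$, which labels the root of $\inctree{\phi(w)}$, while the left and right subtrees are dealt with by the inductive hypothesis.

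Given $\alpha,\alpha',\beta,\beta'$ as in the statement, let $n = \abs{\alpha}+1+\abs{\beta} = \abs{\alpha'}+1+\abs{\beta'}$ be the common length of the standard words $\alpha 1\beta$ and $\alpha'1\beta'$, so that $\alpha,\beta,\alpha',\beta'$ use only symbols from $\set{2,\ldots,n}$, and let $c\colon x \mapsto n+1-x$ be the complementation bijection of $\set{1,\ldots,n}$. Then $c$ is order-reversing, sends $1$ to $n$, sends standard words to standard words, and restricts to an order-reversing bijection of $\set{2,\ldots,n}$ onto $\set{1,\ldots,n-1}$. By the preliminary fact, $\dectree{\alpha\beta}$ has the same shape as $\inctree{c(\alpha)c(\beta)}$ and $\dectree{\alpha 1\beta}$ has the same shape as $\inctree{c(\alpha)\,n\,c(\beta)} = \inctree{c(\alpha 1\beta)}$ (and likewise with primes, and with $\dectree{\cdot}$ and $\inctree{\cdot}$ exchanged). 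Moreover $c(\alpha),c(\alpha'),c(\beta),c(\beta')$ satisfy the hypotheses of \fullref{Lemma}{lem:decinctreenewmaxshape} with the inserted large symbol taken to be $k = n$: lengths are preserved by $c$, the symbol $n$ is strictly greater than every symbol of $c(\alpha),c(\alpha'),c(\beta),c(\beta')$, and $c(\alpha)\,n\,c(\beta)$ and $c(\alpha')\,n\,c(\beta')$ are standard.

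For part~(1): if $\dectree{\alpha\beta}$ and $\dectree{\alpha'\beta'}$ have the same shape, then $\inctree{c(\alpha)c(\beta)}$ and $\inctree{c(\alpha')c(\beta')}$ have the same shape, so by \fullref[(2)]{Lemma}{lem:decinctreenewmaxshape} so do $\inctree{c(\alpha)\,n\,c(\beta)}$ and $\inctree{c(\alpha')\,n\,c(\beta')}$, whence $\dectree{\alpha 1\beta}$ and $\dectree{\alpha'1\beta'}$ have the same shape. Part~(2) is symmetric: one passes from increasing to decreasing trees via $c$ and invokes \fullref[(1)]{Lemma}{lem:decinctreenewmaxshape} instead. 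I do not expect any genuine obstacle here, since all the combinatorial content is already in \fullref{Lemma}{lem:decinctreenewmaxshape}; the only points needing care are the bookkeeping around $c$ (order-reversing on the relevant symbol sets, $c(1)=n$, $c$ preserves standardness and length) and the trivial observation that relabelling preserves shape. (Alternatively, one could copy the proof of \fullref{Lemma}{lem:decinctreenewmaxshape} word for word, replacing ``maximum'' by ``minimum'' throughout and reflecting the figures; the complementation argument just packages this symmetry cleanly.)
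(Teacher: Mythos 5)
Your proof is correct: the order-reversing complementation $c(x)=n+1-x$ does exchange $\dectree{\cdot}$ and $\inctree{\cdot}$ while preserving shape and standardness, so the reduction to \fullref{Lemma}{lem:decinctreenewmaxshape} goes through exactly as you describe. This is precisely the duality the paper invokes (it gives no written proof, stating only that the lemma ``is simply the dual of the first''), so your argument is a faithful and rigorous implementation of the intended approach.
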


Equipped with these notions of binary search trees and insertion algorithms, it is now possible to use them to define
the sylvester and Baxter monoids (approach~2 in the introduction).

\subsection{Sylvester monoid}

Define the \defterm{sylvester $\plit$-symbol} of $u \in \aA^*$ by $\psylv{u} = \rtree{u}$, and the \defterm{sylvester
  $\qlit$-symbol} of $u \in \aA^*$ by $\qsylv{u} = \rrectree{u}$. The definition of the relation $\sylvcong$ using right
strict binary search trees and insertion is:
\[
u \sylvcong v \iff \psylv{u} = \psylv{v}.
\]
Using this definition, it can be shown that $\sylvcong$ is a congruence on $\aA^*$, which is known as the
\defterm{sylvester congruence}. The factor monoid $\aA^*/{\sylvcong}$ is the \defterm{sylvester monoid} and
is denoted $\sylv$. The congruence $\sylvcong$ naturally restricts to a congruence on $\aA_n^*$, and the factor monoid
$\aA_n^*/\sylvcong$ is the \defterm{sylvester monoid of rank $n$} and is denoted $\sylv_n$.

The monoid $\sylv$ is presented by $\pres{\aA}{\drel{R}_\sylv}$, where
\[
\drel{R}_\sylv = \gset[\big]{(cavb,acvb)}{a \leq b < c, v \in \aA^*};
\]
the monoid $\sylv_n$ is presented by $\pres{\aA_n}{\drel{R}_\sylv}$, where the set of defining relations
$\drel{R}_\sylv$ is naturally restricted to $\aA_n^*\times \aA_n^*$. Notice that $\sylv$ and $\sylv_n$ are
multihomogeneous. (See \cite{cgm_homogeneous} for a general discussion of homogeneous and multihomogeneous monoids.)

It is straightforward to see that the map $u \mapsto \parens[\big]{\psylv{u},\qsylv{u}}$ is a bijection between words
in $\aA^*$ and pairs consisting of a right strict binary search tree and a decreasing tree of the same shape; this is
the sylvester analogue of the Robinson--Schensted correspondence. For instance, if $\psylv{u}$ and $\qsylv{u}$ are the
right-hand trees in \eqref{eq:bsteg} and \eqref{eq:incdectreeeg} respectively, then $u = 5451761524$.

For any word $u \in \aA^*$, the set of words in the $\sylvcong$-class of $u$ is the \defterm{sylvester class} of
$u$. Extend this terminology to right strict binary search trees: for such a tree $T$, the set of words $w \in \aA^*$
such that $\psylv{w} = T$ is the \defterm{sylvester class} of $T$. The left-to-right postfix reading $\postreading{T}$
is an element of the sylvester class of $T$: that is, $\psylv{\postreading{T}} = T$. (Note that the infix reading
$\inreading{T}$ is \emph{not} in general an element of the sylvester class of $T$.) The following lemma follows
immediately from the definition of the left-to-right postfix reading:

\begin{lemma}
\label{lem:sylvcanonicalrobinsonschensted}
Let $T$ be an unlabelled binary tree. Under the sylvester analogue of the Robinson--Schensted correspondence, the set of
postfix readings of right strict binary search trees of shape $T$ is obtained by fixing a particular sylvester
$\qlit$-symbol of shape $T$ and varying the sylvester $\plit$-symbol of over all right strict binary search trees of
shape $T$.
\end{lemma}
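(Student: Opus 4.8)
The statement has two halves, and I would keep them apart. Write $Q_T$ for the sylvester $\qlit$-symbol we must identify. First I would show \textbf{(a)} that $\qsylv{\postreading{T'}}$ is one and the same decreasing tree $Q_T$ of shape $T$ for \emph{every} right strict binary search tree $T'$ of shape $T$; then \textbf{(b)} — which is immediate from (a) together with the bijectivity of the sylvester Robinson--Schensted correspondence — that conversely every word $w$ with $\psylv{w}$ of shape $T$ and $\qsylv{w} = Q_T$ equals $\postreading{\psylv{w}}$. Indeed, granting (a): given such a $w$, put $T' = \psylv{w}$; then $w$ and $\postreading{T'}$ are both sent by $u \mapsto \parens[\big]{\psylv{u},\qsylv{u}}$ to $\parens{T',Q_T}$ (using $\psylv{\postreading{T'}} = T'$ and (a); note $Q_T$ has shape $T$, so $\parens{T',Q_T}$ really lies in the image), whence $w = \postreading{T'}$ by injectivity. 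Hence $\gset{\postreading{T'}}{\Sh{T'} = T}$ is precisely the set of words sent to $\gset{\parens{T',Q_T}}{\Sh{T'} = T}$, i.e.\ the set obtained by fixing the sylvester $\qlit$-symbol $Q_T$ and letting the sylvester $\plit$-symbol range over all right strict binary search trees of shape $T$ — exactly what is claimed.

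Everything thus reduces to (a), which I would prove by structural induction on $T$, unwinding what \fullref{Algorithm}{alg:rightstrictinsert} does on $w = \postreading{T'}$. Write $T'$ with root label $r$, left subtree $L$, and right subtree $R$, so $\postreading{T'} = \postreading{L}\,\postreading{R}\,r$ and $\abs{T'} = \abs{L} + \abs{R} + 1$. The algorithm reads $w$ from right to left: it inserts $r$ first, creating the root of the recording tree with the maximal label $\abs{T'}$; next it inserts the letters of $\postreading{R}$ in reverse, and since every label occurring in $R$ is strictly greater than $r$, each such letter descends to the right of the root and is placed exactly where \fullref{Algorithm}{alg:rightstrictinsert} run on $\postreading{R}$ alone would place it, so by the induction hypothesis this right part of the recording tree is $\qsylv{\postreading{R}}$ with all labels raised by the constant $\abs{L}$; finally it inserts the letters of $\postreading{L}$ in reverse, which (their labels all being $\le r$) descend to the left of the root and build $\qsylv{\postreading{L}}$ there with no shift of labels. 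This displays $\qsylv{\postreading{T'}}$ as a decreasing tree of shape $\Sh{T'} = T$ depending only on $T$, so one may \emph{define} $Q_T$ to be it; the base case (a one-node tree) is trivial. I would also remark that $\qsylv{\postreading{T'}}$ automatically has shape $T$, since the recording tree produced by \fullref{Algorithm}{alg:rightstrictinsert} always has the same shape as the insertion tree.

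A shorter, less computational route to (a) runs through \fullref[(2)]{Proposition}{prop:charrectrees}: $\qsylv{\postreading{T'}} = \rrectree{\postreading{T'}} = \dectree{\std{\postreading{T'}}^{-1}}$, so it is enough to check that $\std{\postreading{T'}}$ does not depend on the labelling of $T'$. In fact $\std{\postreading{T'}} = \postreading{T^{\stdlit}}$, where $T^{\stdlit}$ is the standard binary search tree of shape $T$, which is unique by \fullref{Proposition}{prop:infixreading} (its infix reading must be $1\,2\cdots\abs{T}$): the infix reading of a binary search tree is weakly increasing, and among nodes carrying equal labels the infix and postfix orders agree — equal-labelled nodes of a right strict binary search tree form a chain in which each is contained in the left subtree of the next — so standardizing the postfix reading assigns to each node exactly its position in the infix order, which is its label in $T^{\stdlit}$.

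The main obstacle in either approach is the same small structural fact isolated above: reading $\postreading{T'} = \postreading{L}\,\postreading{R}\,r$ from right to left, the right-subtree letters remain to the right of the freshly created root and the left-subtree letters remain to its left, so that the insertion factors cleanly through the two subtrees. In the inductive proof one must additionally carry out the routine bookkeeping of how the recording-tree labels are shifted; in the alternative proof one must verify the claim that equal-labelled nodes of a right strict binary search tree occur in the same relative order in the infix and postfix readings. Both are elementary consequences of the definitions of binary search tree and of the postfix traversal.
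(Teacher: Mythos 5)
Your argument is correct. The paper offers no proof of this lemma --- it is asserted to follow immediately from the definition of the postfix reading --- and your part~(a) induction captures exactly the content being taken for granted: reading $\postreading{T'}=\postreading{L}\,\postreading{R}\,r$ from right to left, each letter's insertion path (and hence the node receiving each recording label) is determined by $\Sh{T'}$ alone, so $\qsylv{\postreading{T'}}$ depends only on $T$, and injectivity of $u\mapsto\parens[\big]{\psylv{u},\qsylv{u}}$ gives the converse inclusion. Your alternative route via $\std{\postreading{T'}}=\postreading{T^{\stdlit}}$ is also sound; note that it in effect re-derives the facts cited from Hivert--Novelli--Thibon (their Lemma~11 and Note~3) in the proof of Proposition~\ref{prop:lrpsameshapecomponent}, so the two statements could be obtained from a single argument.
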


The following corollary of \fullref{Proposition}{prop:charrectrees} does not seem to have been stated before:

\begin{corollary}
\label{corol:charsylvcong}
For $u,v \in \aA^*$,
\begin{align*}
u \sylvcong v \iff{}& \wt{u} = \wt{v} \\
&\land \Sh[\big]{\dectree{\std{u}^{-1}}} = \Sh[\big]{\dectree{\std{v}^{-1}}}.
\end{align*}
\end{corollary}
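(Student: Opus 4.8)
The plan is to reduce the statement to the defining equivalence $u \sylvcong v \iff \rtree{u} = \rtree{v}$ (recall $\psylv{u} = \rtree{u}$) together with one structural observation: a right strict binary search tree is completely determined by its shape and its weight. Granting that observation, both implications follow from a short chain of identifications using \fullref{Proposition}{prop:charrectrees} and the construction of the recording tree in \fullref{Algorithm}{alg:rightstrictinsert}.

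First I would establish the structural observation. Suppose $T$ and $T'$ are right strict binary search trees with $\Sh{T} = \Sh{T'}$ and $\wt{T} = \wt{T'}$. By \fullref{Proposition}{prop:infixreading} the infix readings $\inreading{T}$ and $\inreading{T'}$ are both the unique weakly increasing word of weight $\wt{T}$, so $\inreading{T} = \inreading{T'}$. Since the common shape $\Sh{T} = \Sh{T'}$ fixes the order in which an infix traversal visits the nodes, knowing $\inreading{T}$ then pins down the label of every node, whence $T = T'$.

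Next I would do the bookkeeping of shapes and weights. For any $u \in \aA^*$, the recording tree $\rrectree{u}$ is built in \fullref{Algorithm}{alg:rightstrictinsert} to have the same shape as $\rtree{u} = \psylv{u}$ at every stage, so $\Sh{\rtree{u}} = \Sh{\rrectree{u}}$; and $\Sh{\rrectree{u}} = \Sh{\dectree{\std{u}^{-1}}}$ by \fullref[(2)]{Proposition}{prop:charrectrees}. Moreover $\rtree{u}$ is obtained by inserting the letters of $u$ one at a time as leaves, so the multiset of its labels is exactly the content of $u$, i.e.\ $\wt{\rtree{u}} = \wt{u}$. Hence, for every $u$,
\[
\Sh[\big]{\dectree{\std{u}^{-1}}} = \Sh{\rtree{u}} \qquad\text{and}\qquad \wt{\rtree{u}} = \wt{u}.
\]

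Finally I would assemble the equivalence. For the forward implication, $u \sylvcong v$ means $\rtree{u} = \rtree{v}$, and then $\wt{u} = \wt{\rtree{u}} = \wt{\rtree{v}} = \wt{v}$ and $\Sh{\dectree{\std{u}^{-1}}} = \Sh{\rtree{u}} = \Sh{\rtree{v}} = \Sh{\dectree{\std{v}^{-1}}}$. For the converse, the two hypotheses give $\wt{\rtree{u}} = \wt{u} = \wt{v} = \wt{\rtree{v}}$ and $\Sh{\rtree{u}} = \Sh{\dectree{\std{u}^{-1}}} = \Sh{\dectree{\std{v}^{-1}}} = \Sh{\rtree{v}}$, so the structural observation forces $\rtree{u} = \rtree{v}$, i.e.\ $u \sylvcong v$. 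I do not expect a genuine obstacle here; the only point that needs a moment's care is the structural observation, and that is immediate from \fullref{Proposition}{prop:infixreading}.
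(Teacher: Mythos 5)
Your proposal is correct and follows essentially the same route as the paper: both directions reduce to the chain $\Sh{\dectree{\std{u}^{-1}}} = \Sh{\qsylv{u}} = \Sh{\psylv{u}}$ via \fullref{Proposition}{prop:charrectrees} and the shape-matching in \fullref{Algorithm}{alg:rightstrictinsert}, and the converse rests on exactly the observation that shape plus weight determine a right strict binary search tree via \fullref{Proposition}{prop:infixreading}. Your only cosmetic difference is isolating that observation as a separate step, which the paper carries out inline.
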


\begin{proof}
  Suppose that $u \sylvcong v$. Then $\rtree{u} = \rtree{v}$. Thus
  $\wt{u} = \wt{\rtree{u}} = \wt{\rtree{v}} = \wt{v}$ and, by \fullref{Proposition}{prop:charrectrees},
  \begin{multline*}
    \Sh[\big]{\dectree{\std{u}^{-1}}} = \Sh{\qsylv{u}} = \Sh{\psylv{u}} \\
    = \Sh{\psylv{v}} = \Sh{\qsylv{v}} = \Sh[\big]{\dectree{\std{v}^{-1}}}.
  \end{multline*}

  On the other hand, suppose $\wt{u} = \wt{v}$ and
  $\Sh[\big]{\dectree{\std{u}^{-1}}} = \Sh[\big]{\dectree{\std{v}^{-1}}}$. Then
  \begin{multline*}
    \Sh{\psylv{v}} = \Sh{\qsylv{v}} = \Sh[\big]{\dectree{\std{v}^{-1}}} \\
    = \Sh[\big]{\dectree{\std{u}^{-1}}} = \Sh{\qsylv{u}} = \Sh{\psylv{u}}.
  \end{multline*}
  Thus $\psylv{u}$ and $\psylv{v}$ have the same shape. Since $\wt{u} = \wt{v}$, the trees $\psylv{u}$ and $\psylv{v}$
  have the same content. By \fullref{Proposition}{prop:infixreading}, the infix reading of a binary search tree is the
  unique weakly increasing word with the same content, and so it follows that the infix readings of $\psylv{u}$ and
  $\psylv{v}$ are equal. Since $\psylv{u}$ and $\psylv{v}$ have the same shape, they are thus identical. Hence
  $u \sylvcong v$.
\end{proof}

\subsection{Baxter monoid}

The definition of the relation $\baxtcong$ uses pairs of twin binary search trees. For any word $u \in \aA^*$, the
binary search trees $\ltree{u}$ and $\rtree{u}$ form a pair of twin binary search trees. Define the \defterm{Baxter
  $\qlit$-symbol} of $u$ by $\pbaxt{u} = \parens[\big]{\ltree{u},\rtree{u}}$, and the \defterm{Baxter
  $\qlit$-symbol} of $u$ by $\qbaxt{u} = \parens[\big]{\lrectree{u},\rrectree{u}}$. Define $\baxtcong$ by:
\[
u \baxtcong v \iff \pbaxt{u} = \pbaxt{v}.
\]
Using this definition, it follows that $\baxtcong$ is a congruence on $\aA^*$, which is known as the \defterm{Baxter
  congruence}. The factor monoid $\aA^*/{\baxtcong}$ is the \defterm{Baxter monoid} and is denoted $\baxt$. The
congruence $\baxtcong$ naturally restricts to a congruence on $\aA_n^*$, and the factor monoid $\aA_n^*/\baxtcong$ is
the \defterm{Baxter monoid of rank $n$} and is denoted $\baxt_n$.

The monoid $\baxt$ is presented by $\pres{\aA}{\drel{R}_\baxt}$, where
\begin{align*}
\drel{R}_\baxt ={}&\gset[\big]{(cudavb,cuadvb)}{a \leq b < c \leq d, u,v \in \aA^*} \\
&\cup \gset{(budavc,buadvc)}{a < b \leq c < d, u,v \in \aA^*};
\end{align*}
the monoid $\baxt_n$ is presented by $\pres{\aA_n}{\drel{R}_\baxt}$, where the set of defining relations
$\drel{R}_\baxt$ is naturally restricted to $\aA_n^* \times \aA_n^*$. Note that $\baxt$ and $\baxt_n$ are
multihomogeneous.

The map
\[
u \mapsto \parens[\big]{\pbaxt{u},\qbaxt{u}} = \parens[\Big]{\parens[\big]{\ltree{u},\rtree{u}},\parens[\big]{\lrectree{u},\rrectree{u}}}
\]
is a bijection between words in $\aA^*$ and pairs consisting of a pair of twin binary search tree and a pair made up of
an increasing and a decreasing tree of the same shape as the corresponding binary search trees; this is the Baxter
analogue of the Robinson--Schensted correspondence. For instance, if $\pbaxt{u}$ is the pair of twin binary search trees
in \eqref{eq:bsteg}, and $\qbaxt{u}$ is the pair of increasing and decreasing trees in \eqref{eq:incdectreeeg}, then $u = 5451761524$.

For example, if $u = 5451761524$, then $\std{u} = 6471\underline{10}92835$, so
$(\std{u})^{-1} = 4792\underline{10}13865$. Hence $\lrectree{u}$ and $\inctree{(\std{l})^{-1}}$ are both the left-hand
tree in \eqref{eq:incdectreeeg}, and $\rrectree{u}$ and $\dectree{(\std{u})^{-1}}$ are both the right-hand tree in
\eqref{eq:incdectreeeg}.

For any word $u \in \aA^*$, the set of words in the $\baxtcong$-class of $u$ is the \defterm{Baxter class} of
$u$. Extend this terminology to pairs of twin binary search trees: for such a pair $(T_L,T_R)$, the set of words
$w \in \aA^*$ such that $\pbaxt{w} = T$ is the \defterm{Baxter class} of $T$.

There is a straightforward method for computing words in the Baxter class of a pair of twin binary search trees
$(T_L,T_R)$:

\begin{method}
\label{method:baxterreading}

\textit{Input:} A pair of twin binary search trees $(T_L,T_R)$.

\textit{Output:} A word in the Baxter class of $(T_L,T_R)$.
\begin{enumerate}
\item Set $(U_L,U_R)$ to be $(T_L,T_R)$. (Throughout this computation, $U_L$ is a forest of
  left strict binary search trees and $U_R$ is a right strict binary search tree.)
\item If $U_L$ and $U_R$ are empty, halt.
\item Given some $(U_L,U_R)$, choose and output some symbol $a$ that labels a root of some tree in the forest $U_L$ and a leaf
  of the tree $U_R$.
\item Delete the corresponding root vertex of $U_L$ and the corresponding leaf vertex of $U_L$.
\item Go to step~2.
\end{enumerate}
\end{method}

This is essentially \cite[Algorithm on p.133]{giraudo_baxter2}, except that the method given here is non-deterministic
in that there may be several choices for $a$ in step~3. As these choices vary, all words in the Baxter class of
$(T_L,T_R)$ are obtained. If, in step~3, one always chooses the leftmost possible $a$, call the resulting word the
\defterm{left-consistent reading} of $(T_L,T_R)$. (By \cite[Proposition~4.16]{giraudo_baxter2}, the left-consistent
reading of $(T_L,T_R)$ is the lexicographically smallest word in the corresponding Baxter class.) The following lemma
follows immediately from the definition of the left-consistent reading:

\begin{lemma}
\label{lem:canonicalbaxtrobinsonschensted}
Let $(T_L,T_R)$ be a pair of twin binary trees. Under the Baxter analogue of the Robinson--Schensted correspondence, the
set of left-consistent readings of pairs of twin binary search trees of shape $(T_L,T_R)$ is obtained by fixing a
particular Baxter $\qlit$-symbol of shape $(T_L,T_R)$ and varying the Baxter $\plit$-symbol over all pairs of twin
binary search trees of shape $(T_L,T_R)$.
\end{lemma}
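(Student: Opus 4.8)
The plan is to argue in the same spirit as \fullref{Lemma}{lem:sylvcanonicalrobinsonschensted}, reducing the statement to the single observation that the left-consistent reading of a pair of twin binary search trees depends only on its shape. Recall that, under the Baxter analogue of the Robinson--Schensted correspondence, a word $u$ corresponds to the pair $\parens[\big]{\pbaxt{u},\qbaxt{u}}$ with $\pbaxt{u} = \parens[\big]{\ltree{u},\rtree{u}}$ and $\qbaxt{u} = \parens[\big]{\lrectree{u},\rrectree{u}}$, and that by \fullref{Method}{method:baxterreading} and the discussion following it the left-consistent reading $w$ of a pair of twin binary search trees $(P_L,P_R)$ lies in its Baxter class, so that $\pbaxt{w} = (P_L,P_R)$. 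The content to be established is that the remaining piece of data, $\qbaxt{w}$, is determined by the common shape.

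First I would take two pairs of twin binary search trees $(P_L,P_R)$ and $(P_L',P_R')$, both of shape $(T_L,T_R)$, and run \fullref{Method}{method:baxterreading} on each, always choosing the leftmost admissible symbol at step~3. By induction on the number of nodes, I would show that the two runs output symbols occupying corresponding positions, with respect to the common infix reading, in the same order. The key point is that the set of symbols admissible at step~3 corresponds exactly to the set of infix positions that are simultaneously a root of some tree of the current forest $U_L$ and a leaf of the current tree $U_R$, and this set of positions depends only on the shapes of $U_L$ and $U_R$; the word `leftmost' refers to infix order, which is likewise shape-determined; and after deleting the chosen position the updated pairs of forests again have equal shapes, so the induction continues.

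Next, since $\lrectree{w}$ and $\rrectree{w}$ are the standard increasing and decreasing trees recording, in the node-positions of $\ltree{w} = P_L$ and $\rtree{w} = P_R$ respectively, the step at which each node is processed by \fullref{Method}{method:baxterreading}, the preceding paragraph yields $\lrectree{w} = \lrectree{w'}$ and $\rrectree{w} = \rrectree{w'}$, hence $\qbaxt{w} = \qbaxt{w'}$. Write $Q_0$ for this common value; it is a Baxter $\qlit$-symbol of shape $(T_L,T_R)$ depending only on $(T_L,T_R)$.

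Finally I would assemble the statement. For each pair $P$ of twin binary search trees of shape $(T_L,T_R)$, the left-consistent reading $w$ of $P$ satisfies $\pbaxt{w} = P$ and $\qbaxt{w} = Q_0$, so under the correspondence $P$ produces exactly $(P,Q_0)$; conversely every pair $(P,Q_0)$ with $P$ a pair of twin binary search trees of shape $(T_L,T_R)$ arises from its left-consistent reading in this way. Thus the set of left-consistent readings of pairs of twin binary search trees of shape $(T_L,T_R)$ is precisely the set obtained by fixing the Baxter $\qlit$-symbol $Q_0$ and letting the Baxter $\plit$-symbol range over all pairs of twin binary search trees of shape $(T_L,T_R)$. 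The only slightly delicate point is the inductive claim above --- that the admissible set at step~3 is shape-determined and that the two runs stay in lock-step --- and even this is immediate once one unwinds \fullref{Method}{method:baxterreading}; everything else is bookkeeping with the Robinson--Schensted bijection.
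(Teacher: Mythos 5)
Your proof is correct and takes essentially the same route as the paper, which offers no argument at all beyond asserting that the lemma ``follows immediately from the definition of the left-consistent reading'': the entire content is precisely your observation that each step of \fullref{Method}{method:baxterreading} \dash the set of admissible choices, the meaning of ``leftmost'', and the shapes of the updated forests \dash is determined by $(T_L,T_R)$ alone, so that $\qbaxt{w}$ is constant over all pairs of twin binary search trees of that shape while $\pbaxt{w}$ recovers the pair itself. The one point genuinely deserving care, which you correctly isolate and handle, is recasting admissibility in step~3 in terms of infix positions rather than symbol values (these can differ when a label is repeated), so that the lock-step induction goes through.
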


The following result is the analogy of \fullref{Corollary}{corol:charsylvcong} for the Baxter monoid:

\begin{corollary}
\label{corol:charbaxtcong}
For $u,v \in \aA^*$,
\begin{align*}
u \baxtcong v \iff{}& \wt{u} = \wt{v} \\
  &\land \Sh[\big]{\dectree{\std{u}^{-1}}} = \Sh[\big]{\dectree{\std{v}^{-1}}} \\
  &\land \Sh[\big]{\inctree{\std{u}^{-1}}} = \Sh[\big]{\inctree{\std{v}^{-1}}}.
\end{align*}
\end{corollary}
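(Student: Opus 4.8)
The plan is to prove the corollary by doubling up the argument used for \fullref{Corollary}{corol:charsylvcong}, applying it separately to each component of the pair of twin binary search trees. By definition, $u \baxtcong v$ holds precisely when $\pbaxt{u} = \pbaxt{v}$, which, since a pair of twin binary search trees is determined by its two components, is equivalent to the conjunction $\ltree{u} = \ltree{v}$ and $\rtree{u} = \rtree{v}$. So it suffices to characterize when both of these equalities hold.

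For the forward implication, assume $\ltree{u} = \ltree{v}$ and $\rtree{u} = \rtree{v}$. Comparing weights of these trees immediately gives $\wt{u} = \wt{\rtree{u}} = \wt{\rtree{v}} = \wt{v}$. Next, since $\rrectree{u}$ is built to have the same shape as $\rtree{u}$ (and likewise for $v$), \fullref[(2)]{Proposition}{prop:charrectrees} yields the chain
\[
\Sh[\big]{\dectree{\std{u}^{-1}}} = \Sh{\rrectree{u}} = \Sh{\rtree{u}} = \Sh{\rtree{v}} = \Sh{\rrectree{v}} = \Sh[\big]{\dectree{\std{v}^{-1}}},
\]
and, using that $\lrectree{u}$ has the same shape as $\ltree{u}$, \fullref[(1)]{Proposition}{prop:charrectrees} similarly gives $\Sh[\big]{\inctree{\std{u}^{-1}}} = \Sh[\big]{\inctree{\std{v}^{-1}}}$.

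For the converse, assume the three conditions on the right-hand side hold. Running the two chains of shape equalities above in reverse shows $\Sh{\rtree{u}} = \Sh{\rtree{v}}$ and $\Sh{\ltree{u}} = \Sh{\ltree{v}}$. Since $\wt{u} = \wt{v}$, the trees $\rtree{u}$ and $\rtree{v}$ have the same content; by \fullref{Proposition}{prop:infixreading} their infix readings are both the unique weakly increasing word with that content, hence coincide, and since these trees also have the same shape they must be identical, so $\rtree{u} = \rtree{v}$. The identical argument applied to the left strict trees gives $\ltree{u} = \ltree{v}$, whence $\pbaxt{u} = \parens[\big]{\ltree{u},\rtree{u}} = \parens[\big]{\ltree{v},\rtree{v}} = \pbaxt{v}$, i.e.\ $u \baxtcong v$.

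There is no serious obstacle here — it is a mechanical adaptation of the sylvester case — but the one point worth stressing is that, in the converse, both shape conditions are genuinely needed and are used independently: the decreasing-tree condition pins down $\rtree{u} = \rtree{v}$ via $\rrectree{}$, while the increasing-tree condition separately pins down $\ltree{u} = \ltree{v}$ via $\lrectree{}$. Neither can be dropped, since the canopy-complementarity constraint linking the two trees of a twin pair does not suffice to recover one component from the other.
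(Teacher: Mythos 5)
Your proof is correct and follows essentially the same route as the paper, whose proof of this corollary is literally ``follow the reasoning in the proof of Corollary~\ref{corol:charsylvcong}, but working additionally with $\ltree{\cdot}$, $\lrectree{\cdot}$, and $\inctree{\std{\cdot}^{-1}}$'' --- which is precisely the doubling-up you carry out. Your closing remark that both shape conditions are independently needed in the converse is a sensible observation but not part of the paper's argument.
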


\begin{proof}
  Follow the reasoning in the proof of \fullref{Corollary}{corol:charsylvcong}, but working additionally with
  $\ltree{\cdot}$, $\lrectree{\cdot}$, and $\inctree{\std{\cdot}^{-1}}$.
\end{proof}


\section{Quasi-crystals with abstract shapes}
\label{sec:abstractshapes}

An \defterm{abstract shape} is a map $\shapelit : \aA^* \to S$ satisfying the following axioms:
\begin{itemize}
\item[S1] The map $\shapelit$ is invariant under standardization, in the sense that for all $u \in \aA^*$
  $\shape{u} = \shape{\std{u}}$.
\item[S2] For all $u,v \in \aA^*$ and $a \in \aA$, if $\wt{u} = \wt{v}$ and $\shape{u} = \shape{v}$, then
  $\shape{ua} = \shape{va}$ and $\shape{au} = \shape{av}$ for all $w \in \aA^*$.
\end{itemize}

Explicit examples of shapes will be given in the next section. For the moment, note that by
\fullref{Proposition}{prop:eipreservesstd} and \fullref{Corollary}{corol:stdinsamecomp}, axiom~S1 is equivalent to
saying that $\shapelit$ is invariant under the quasi-Kashiwara operators, in the sense that for all $u \in \aA^*$ and
all $i \in \nset$, if $\e_i(u)$ is defined, then $\shape{u} = \shape{\e_i(u)}$, and if $\f_i(u)$ is defined,
$\shape{u} = \shape{\f_i(u)}$. Thus, if axiom~S1 is satisfied, every element of a given connected component of
$\Gamma(\hypo)$ will have the same image under $\shapelit$.

For $u,v \in \aA^*$, a quasi-crystal isomorphism $\theta : \Gamma(\hypo,u) \to \Gamma(\hypo,v)$ is
\defterm{shape-preserving} (with respect to an abstract shape $\shapelit$) if $\shape{w} = \shape{\theta{w}}$ for all
$w \in \Gamma(\hypo,u)$.

Define a relation $\shapisom$ on the free monoid $\aA^*$ as follows: $u \shapisom v$ if and only if there is a
shape-preserving quasi-crystal isomorphism $\theta : \Gamma(\hypo,u) \to \Gamma(\hypo,v)$ such that $\theta(u) = v$.
That is, $u \shapisom v$ if and only if $u$ and $v$ have the same image under $\shapelit$ and are in the same position
in isomorphic connected components of $\Gamma(\hypo)$. Note that ${\shapisom} \subseteq {\hypoisom}$.

\begin{proposition}
  \label{prop:isomdefinescong}
  The relation $\shapisom$ is a congruence on the free monoid $\aA^*$.
\end{proposition}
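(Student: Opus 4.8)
The plan is to show that $\shapisom$ is an equivalence relation and is compatible with multiplication on both sides, building directly on the fact (stated just before Proposition~\ref{prop:hypocongsimhypo}) that $\hypoisom$ is already a congruence. First I would check that $\shapisom$ is an equivalence relation: reflexivity is clear since the identity map is a shape-preserving quasi-crystal isomorphism; symmetry follows because the inverse of a shape-preserving isomorphism is again shape-preserving; and transitivity follows by composing shape-preserving isomorphisms, which is again shape-preserving. So the only real content is left and right compatibility, and by the symmetry between axiom~S2's two clauses it suffices to treat one side, say right multiplication.

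So suppose $u \shapisom v$, witnessed by a shape-preserving quasi-crystal isomorphism $\theta : \Gamma(\hypo,u) \to \Gamma(\hypo,v)$ with $\theta(u)=v$, and let $a \in \aA$. I need a shape-preserving isomorphism $\Gamma(\hypo,ua) \to \Gamma(\hypo,va)$ sending $ua$ to $va$. Since $\hypoisom$ is already known to be a congruence, $ua \hypoisom va$, so there \emph{is} a quasi-crystal isomorphism $\theta' : \Gamma(\hypo,ua) \to \Gamma(\hypo,va)$ with $\theta'(ua) = va$; the task is purely to verify that $\theta'$ is shape-preserving. By the remark following the axioms (using Proposition~\ref{prop:eipreservesstd} and Corollary~\ref{corol:stdinsamecomp}), axiom~S1 means $\shapelit$ is constant on each connected component of $\Gamma(\hypo)$, so it is enough to check $\shape{ua} = \shape{\theta'(ua)} = \shape{va}$, i.e. to check shape-preservation at the single base vertex. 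Now $u$ and $v$ lie in the same component (since ${\shapisom} \subseteq {\hypoisom}$), hence by Corollary~\ref{corol:stdinsamecomp} $\std{u} = \std{v}$, so in particular $\wt{u} = \wt{v}$; and $\shape{u} = \shape{v}$ because $\theta$ is shape-preserving at $u$. Axiom~S2 then yields exactly $\shape{ua} = \shape{va}$, as required. The argument for left multiplication is identical using the other clause of~S2, and combining left and right compatibility with the equivalence-relation property gives that $\shapisom$ is a congruence.

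The step I expect to require the most care is the reduction "shape-preservation at the base vertex implies shape-preservation everywhere," since this is where axiom~S1 (via the equivalence with quasi-Kashiwara invariance, and hence constancy on components) does the heavy lifting; once that reduction is in place, the verification at the base vertex is just a direct application of~S2 together with the observation $\wt{u}=\wt{v}$ extracted from $u \hypoisom v$ via Corollary~\ref{corol:stdinsamecomp}. One subtlety worth spelling out explicitly is why we may invoke the congruence property of $\hypoisom$ to produce \emph{some} isomorphism $\theta'$ before we know it is shape-preserving: this is legitimate precisely because shape-preservation of any quasi-crystal isomorphism between two given components depends only on the shape values at corresponding vertices, which by component-constancy reduces to the base vertices, so it does not matter which underlying isomorphism we started from.
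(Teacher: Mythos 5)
Your overall strategy is essentially the paper's: reduce to compatibility with a single generator, invoke the fact that $\hypoisom$ is already a congruence to obtain \emph{some} quasi-crystal isomorphism $\theta'$ between the new components, and then use axiom~S1 (constancy of $\shapelit$ on connected components) to reduce shape-preservation of $\theta'$ to the single equality $\shape{ua}=\shape{va}$, which axiom~S2 supplies. The paper packages this as an induction along the letters of an arbitrary word $w$, but that is just your single-letter step iterated, so there is no real difference in approach, and your closing observation that shape-preservation does not depend on which underlying isomorphism one starts from is exactly the point the paper relies on.

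There is, however, one genuinely false step. You assert that ${\shapisom}\subseteq{\hypoisom}$ implies that $u$ and $v$ lie in the \emph{same} connected component of $\Gamma(\hypo)$, and hence $\std{u}=\std{v}$ by Corollary~\ref{corol:stdinsamecomp}. That is not what $u\hypoisom v$ means: it says that $u$ and $v$ occupy corresponding positions in possibly distinct but \emph{isomorphic} components. For instance, $1221\hypoisom 2121$, yet these words lie in the distinct components $\Gamma(\hypo,1221)$ and $\Gamma(\hypo,2121)$ of Figure~\ref{fig:gammahypothree}, and $\std{1221}=1342\neq 3142=\std{2121}$. The conclusion you actually need, namely $\wt{u}=\wt{v}$, is nevertheless true, but it requires a different justification: either observe that an isomorphism of components determines the weights of corresponding vertices (the argument in the proof of Proposition~\ref{prop:placcongsimplac}, reused for Proposition~\ref{prop:hypocongsimhypo}), or simply invoke ${\hypoisom}={\hypocong}$ (Proposition~\ref{prop:hypocongsimhypo}) together with the multihomogeneity of the hypoplactic monoid. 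With that one-line repair your proof goes through and coincides with the paper's.
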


\begin{proof}
  It is clear from the definition that $\shapisom$ is an equivalence relation; it thus remains to prove that $\shapisom$ is
  compatible with multiplication in $\aA^*$.

  Suppose $u \shapisom v$ and $w \in \aA^*$. Suppose $w = w_1\cdots w_k$ (where $w_i \in \aA$).  The aim is to proceed
  by induction on $i$ and show there is a shape-preserving quasi-crystal isomorphism
  $\theta_i : \Gamma(\hypo,uw_1\cdots w_i) \to \Gamma(\hypo,vw_1\cdots w_i)$ such that $\theta(uw_1\cdots w_i) = vw_1\cdots w_i$ for all
  $i \in \set{0,1,\ldots,k}$ (formally taking $w_1\cdots w_i$ to be $\emptyword$ when $i=0$).

  For $i=0$, since $u \shapisom v$, there is by definition a shape-preserving quasi-crystal isomorphisms
  $\theta_0 : \Gamma(\hypo,u) \to \Gamma(\hypo,v)$ such that $\theta(u) = v$. This is the basis of the induction.

  So assume there is a shape-preserving quasi-crystal isomorphism
  $\theta_i : \Gamma(\hypo,uw_1\cdots w_i) \to \Gamma(\hypo,vw_1\cdots w_i)$ such that
  $\theta_i(uw_1\cdots w_i) = vw_1\cdots w_i$. In particular, $uw_1\cdots w_i \hypoisom vw_1\cdots w_i$. Since $\hypoisom$
  is a congruence, $uw_1\cdots w_iw_{i+1} \hypoisom vw_1\cdots w_iw_{i+1}$, and so there is a quasi-crystal isomorphism
  (which is not yet known to be shape-preserving)
  $\theta_{i+1} : \Gamma(\hypo,uw_1\cdots w_{i+1}) \to \Gamma(\hypo,vw_1\cdots w_{i+1})$ such that
  $\theta_{i+1}(uw_1\cdots w_{i+1}) = vw_1\cdots w_{i+1}$. Furthermore, since $\theta_i$ is shape-preserving,
  $\shape{uw_1\cdots w_i} = \shape{\theta(uw_1\cdots w_i)} = \shape{vw_1\cdots w_i}$. Combine this with
  $uw_1\cdots w_i \hypoisom vw_1\cdots w_i$ and use axiom~S2 to see that
  $\shape{uw_1\cdots w_{i+1}} = \shape{vw_1\cdots w_{i+1}}$. Thus, by axiom S1, every element in the connected
  components $\Gamma(\hypo,uw_1\cdots w_{i+1})$ and $\Gamma(\hypo,vw_1\cdots w_{i+1})$ have the same shape. Hence
  $\theta_{i+1}$ is shape-preserving.

  Hence, by induction, $\theta_k : \Gamma(\hypo,uw) \to \Gamma(\hypo,vw)$ is a shape-preserving quasi-crystal
  isomorphism; therefore $uw \shapisom vw$.

  Similar reasoning shows that $wu \shapisom wv$; hence $\shapisom$ is compatible with multiplication and is thus a
  congruence.
\end{proof}

Thus every abstract shape $\shapelit$ gives rise to a monoid $\aA^*/{\shapisom}$.

\section{Crystallizing the sylvester monoid}
\label{sec:sylvcrystal}

\subsection{Definition by crystals}

Let $S$ be the set of unlabelled binary trees and define $\sylvshape{u} = \Sh{\dectree{\std{u}^{-1}}}$ for all
$u \in \aA^*$.

\begin{proposition}
  \label{prop:sylvshape}
  $\sylvshapelit$ is an abstract shape.
\end{proposition}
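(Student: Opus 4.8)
The plan is to verify the two axioms~S1 and~S2 of an abstract shape. Axiom~S1 is immediate: standardization is idempotent (it is the identity on standard words, and $\std{u}$ is always a standard word), so $\std{\std{u}} = \std{u}$ and hence $\sylvshape{\std{u}} = \Sh{\dectree{\std{\std{u}}^{-1}}} = \Sh{\dectree{\std{u}^{-1}}} = \sylvshape{u}$. All the content is in axiom~S2, and the point is that \fullref{Lemma}{lem:decinctreenewmaxshape} and \fullref{Lemma}{lem:decinctreenewminshape} are tailor-made for it: S2 concerns appending or prepending a letter, and those lemmata describe exactly how the shape of the relevant decreasing tree behaves when a new largest or smallest symbol is inserted into a standard word.

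So first I would record the elementary effect of appending or prepending a letter on standardization. Let $u \in \aA^*$ have length $n$, let $a \in \aA$, let $c$ be the number of letters of $u$ strictly smaller than $a$, and let $m$ be the number of occurrences of $a$ in $u$. Working with the convention (as in \cite{novelli_hypoplactic}) that equal letters are standardized from left to right, appending $a$ produces $\std{ua}$ from $\std{u}$ by incrementing every entry that is $\geq p := c+m+1$ and then appending $p$, while prepending $a$ produces $\std{au}$ from $\std{u}$ by prefixing $q := c+1$ and incrementing every entry of $\std{u}$ that is $\geq q$. Passing to inverses, viewed as permutations in one-line notation, this gives: (i) $\std{ua}^{-1}$ is $\std{u}^{-1}$ with the new maximal symbol $n+1$ inserted immediately after the first $p-1$ letters; and (ii) $\std{au}^{-1}$ is obtained from $\std{u}^{-1}$ by adding $1$ to every entry and then inserting the new minimal symbol $1$ immediately after the first $q-1$ letters. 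The crucial observation is that $p$ and $q$ depend only on $a$ and on $\wt{u}$ (which determines $c$ and $m$).

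Now suppose $\wt{u} = \wt{v}$ and $\sylvshape{u} = \sylvshape{v}$; then $|u| = |v| = n$, and by the last observation the splitting positions for $u$ and for $v$ coincide. Write $\std{u}^{-1} = \alpha\beta$ and $\std{v}^{-1} = \alpha'\beta'$ with $|\alpha| = |\alpha'| = p-1$. By~(i), $\std{ua}^{-1} = \alpha(n+1)\beta$ and $\std{va}^{-1} = \alpha'(n+1)\beta'$; these are standard words, $n+1$ exceeds every symbol of $\alpha,\alpha',\beta,\beta'$, and $\Sh{\dectree{\alpha\beta}} = \Sh{\dectree{\std{u}^{-1}}} = \sylvshape{u} = \sylvshape{v} = \Sh{\dectree{\alpha'\beta'}}$, so \fullref[(1)]{Lemma}{lem:decinctreenewmaxshape} yields $\Sh{\dectree{\std{ua}^{-1}}} = \Sh{\dectree{\std{va}^{-1}}}$, i.e.\ $\sylvshape{ua} = \sylvshape{va}$. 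The prepending statement is handled the same way via~(ii) and \fullref[(1)]{Lemma}{lem:decinctreenewminshape}, once one notes that the shape of the decreasing tree of a word with pairwise distinct entries is unchanged by an order-preserving relabelling of those entries (immediate from the recursive definition of $\dectree{\cdot}$), so that adding $1$ to all entries of $\std{u}^{-1}$ and $\std{v}^{-1}$ does not affect the shapes being compared.

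The step I expect to be the main obstacle is the bookkeeping behind~(i) and~(ii): pinning down the effect of appending/prepending a letter on $\std{\cdot}$ and then on $\std{\cdot}^{-1}$, and being careful enough with the standardization convention for repeated letters that the splitting positions really do depend only on $\wt{u}$ and $a$; everything afterwards is a direct appeal to the two technical lemmata. (One could also sidestep this bookkeeping altogether: by \fullref{Corollary}{corol:charsylvcong} the hypothesis of~S2 says precisely that $u \sylvcong v$, hence $ua \sylvcong va$ and $au \sylvcong av$ since $\sylvcong$ is a congruence, and a second application of \fullref{Corollary}{corol:charsylvcong} gives $\sylvshape{ua} = \sylvshape{va}$ and $\sylvshape{au} = \sylvshape{av}$.)
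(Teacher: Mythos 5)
Your proof is correct and follows essentially the same route as the paper's: S1 from invariance of $\sylvshapelit$ under standardization, and S2 by pinning down the position at which the new maximal (resp.\ minimal) symbol is inserted into $\std{\cdot}^{-1}$ --- a position depending only on $a$ and $\wt{u}$ --- and then invoking \fullref[(1)]{Lemma}{lem:decinctreenewmaxshape} (resp.\ \fullref[(1)]{Lemma}{lem:decinctreenewminshape}); your bookkeeping in (i)--(ii) is just a more explicit version of the paper's observation that ``the symbol $a$ is replaced by the same symbol $b$ in both words''. The only caveat concerns your parenthetical shortcut via \fullref{Corollary}{corol:charsylvcong}: it is logically sound but presupposes the prior knowledge that $\sylvcong$ is a congruence, which the paper deliberately avoids using in this section, since the whole point is to derive that fact from the quasi-crystal structure.
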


\begin{proof}
  By \fullref{Proposition}{prop:eipreservesstd}, the map $\sylvshapelit$ is invariant under the quasi-Kashiwara operators,
  and so satisfies axiom~S1.

  To see that $\sylvshapelit$ satisfies~S2, proceed as follows. Let $u,v \in \aA^*$ be such that $u \sim v$ and
  $\sylvshape{u} = \sylvshape{v}$, and let $a \in \aA$.

  The first aim is to prove that $\sylvshape{ua} = \sylvshape{va}$. When one computes $\std{ua}$ and $\std{va}$, the
  symbol $a$ is replaced by the same symbol $b$ in both words, since $u$ and $v$ have the same content. Let
  $k = \wlen{u}+1$. In $\std{ua}$ (viewed as a permutation), $k$ is mapped to $b$. Thus, $k$ is the $b$-th symbol, and
  the maximum symbol, in both $\std{ua}^{-1}$ and $\std{va}^{-1}$. That is, $\std{u}^{-1} = \alpha\beta$,
  $\std{ua}^{-1} = \alpha k\beta$, $\std{v}^{-1} = \alpha'\beta'$ and $\std{va}^{-1} = \alpha' k\beta'$ for some
  $\alpha,\alpha',\beta,\beta' \in \aA^*$, such that $\abs{\alpha} = \abs{\alpha'}$, $\abs{\beta} = \abs{\beta'}$, the
  symbol $k$ is greater than every symbol in $\alpha,\alpha',\beta,\beta'$, and $\alpha k\beta$ and $\alpha' k\beta'$
  are standard words. Furthermore, $\dectree{\std{u}^{-1}} = \dectree{\alpha\beta}$ and
  $\dectree{\std{v}^{-1}} = \dectree{\alpha'\beta'}$ have the same shape, since $\sylvshape{u} = \sylvshape{v}$. Thus,
  by \fullref[(1)]{Lemma}{lem:decinctreenewmaxshape}, $\dectree{\std{ua}^{-1}} = \dectree{\alpha k\beta}$ and
  $\dectree{\std{va}^{-1}} = \dectree{\alpha'k\beta'}$ have the same shape. Hence $\sylvshape{ua} = \sylvshape{va}$.

  The next aim is to prove that $\sylvshape{au} = \sylvshape{av}$. When one computes $\std{au}$ and $\std{av}$, the
  symbol $a$ is replaced by the same symbol $b$ in both words, since $u$ and $v$ have the same content. In $\std{au}$
  (viewed as a permutation), $1$ is mapped to $b$. Thus, $1$ is the $b$-th symbol, and the minimum symbol, in both
  $\std{au}^{-1}$ and $\std{av}^{-1}$. By similar reasoning to the above, but using
  \fullref[(1)]{Lemma}{lem:decinctreenewminshape}, one sees that $\dectree{\std{au}^{-1}}$ and $\dectree{\std{av}^{-1}}$
  have the same shape. Hence $\sylvshape{au} = \sylvshape{av}$.
\end{proof}

As a consequence of \fullref{Proposition}{prop:sylvshape}, one can define a relation $\sylvisom$ on the free monoid
$\aA^*$ to be the relation $\shapisom$ from \fullref{Section}{sec:abstractshapes} with the abstract shape
$\shapelit = \sylvshapelit$. By \fullref{Propositions}{prop:isomdefinescong} and \ref{prop:sylvshape}, $\sylvisom$ is a
congruence.

When viewing the quasi-crystal graph $\Gamma(\hypo)$ in the context of $\sylvshapelit$-preserving isomorphisms, denote it
by $\Gamma(\sylv)$. Similarly, denote the connected component $\Gamma(\hypo,u)$ by $\Gamma(\sylv,u)$. (This is arguably
an abuse of notation, since $\Gamma(\hypo)$ and $\Gamma(\sylv)$ are the same graph, and the difference is in the notion
of isomorphism and the relation to which it gives rise.)

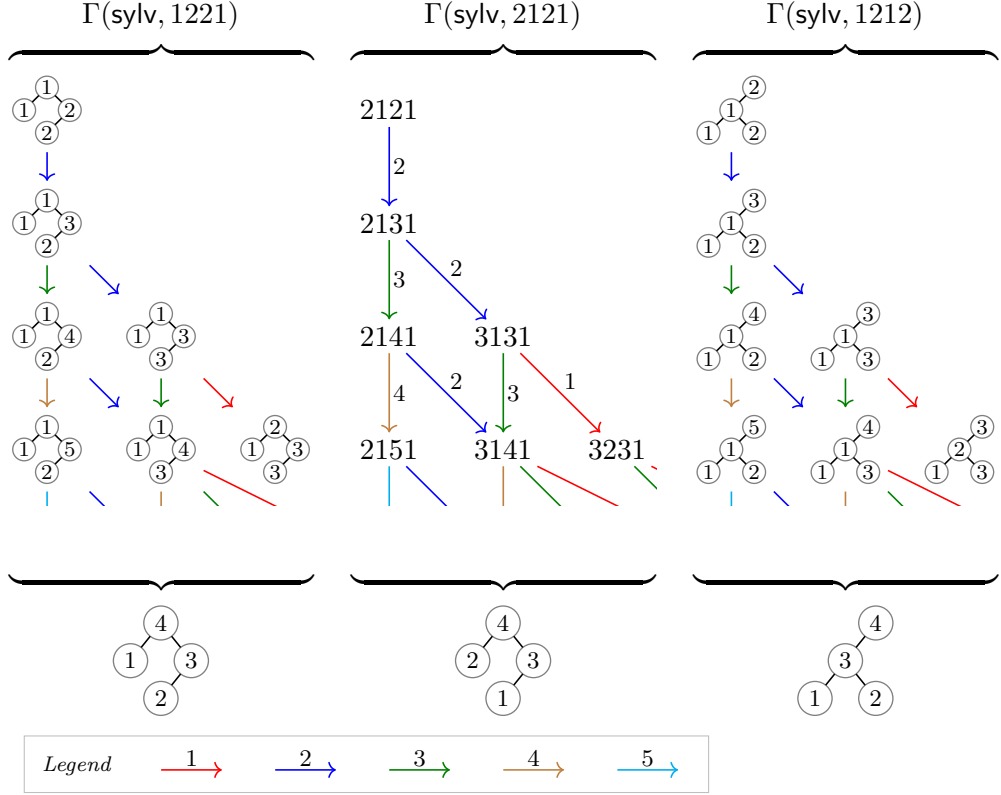
\begin{figure}[t]
  \centering
  \begin{tikzpicture}[bigcrystal,x=15mm,y=15mm,labelledcolouredcrystaledges]
    \begin{scope}[xshift=0mm]
      \begin{scope}
        \node (1221) at (0,0) {$\tikz[microbst]
          \node {$1$}
          child { node {$1$} }
          child { node {$2$}
            child { node {$2$} }
            child[missing]
          };$};
        \node (1231) at (0,-1) {$\tikz[microbst]
          \node {$1$}
          child { node {$1$} }
          child { node {$3$}
            child { node {$2$} }
            child[missing]
          };$};
        \node (1241) at (0,-2) {$\tikz[microbst]
          \node {$1$}
          child { node {$1$} }
          child { node {$4$}
            child { node {$2$} }
            child[missing]
          };$};
        \node (1331) at (1,-2) {$\tikz[microbst]
          \node {$1$}
          child { node {$1$} }
          child { node {$3$}
            child { node {$3$} }
            child[missing]
          };$};
        \node (1251) at (0,-3) {$\tikz[microbst]
          \node {$1$}
          child { node {$1$} }
          child { node {$5$}
            child { node {$2$} }
            child[missing]
          };$};
        \node (1341) at (1,-3) {$\tikz[microbst]
          \node {$1$}
          child { node {$1$} }
          child { node {$4$}
            child { node {$3$} }
            child[missing]
          };$};
        \node (1332) at (2,-3) {$\tikz[microbst]
          \node {$2$}
          child { node {$1$} }
          child { node {$3$}
            child { node {$3$} }
            child[missing]
          };$};
        \coordinate (1261) at (0,-4) {};
        \coordinate (1351) at (1,-4) {};
        \coordinate (1342) at (3,-4) {};
        \coordinate (1441) at (2,-4) {};
        \coordinate (2332) at (4,-4) {};
      \end{scope}
      \begin{scope}[colouredcrystaledges]
        \draw[f2] (1221) to (1231);
        \draw[f2] (1231) to (1331);
        \draw[f3] (1231) to (1241);
        \draw[f4] (1241) to (1251);
        \draw[f2] (1241) to (1341);
        \draw[f3] (1331) to (1341);
        \draw[f1] (1331) to (1332);
        \draw[f5] (1251) to (1261);
        \draw[f2] (1251) to (1351);
        \draw[f4] (1341) to (1351);
        \draw[f1] (1341) to (1342);
        \draw[f3] (1341) to (1441);
        \draw[f1] (1332) to (2332);
        \draw[f3] (1332) to (1342);
      \end{scope}
      \fill[color=white,path fading=fade down] ($ (1251) + (-.2,-.5) $) rectangle ($ (1332) + (2,-.8) $);
      \fill[color=white] ($ (1251) + (-.2,-.79) $) rectangle ($ (1332) + (2,-1.1) $);
      \fill[color=white,path fading=fade right] ($ (1332) + (.35,0) $) rectangle ($ (1332) + (.6,-1) $);
      \fill[color=white] ($ (1332) + (.58,0) $) rectangle ($ (1332) + (2,-1) $);
    \end{scope}
    \begin{scope}[xshift=45mm]
      \begin{scope}
        \node (2121) at (0,-0) {$2121$};
        \node (2131) at (0,-1) {$2131$};
        \node (2141) at (0,-2) {$2141$};
        \node (3131) at (1,-2) {$3131$};
        \node (2151) at (0,-3) {$2151$};
        \node (3141) at (1,-3) {$3141$};
        \node (3231) at (2,-3) {$3231$};
        \coordinate (2161) at (0,-4) {};
        \coordinate (3151) at (1,-4) {};
        \coordinate (3241) at (3,-4) {};
        \coordinate (4141) at (2,-4) {};
        \coordinate (3232) at (4,-4) {};
        \draw[f2] (2121) to (2131);
        \draw[f2] (2131) to (3131);
        \draw[f3] (2131) to (2141);
        \draw[f4] (2141) to (2151);
        \draw[f2] (2141) to (3141);
        \draw[f3] (3131) to (3141);
        \draw[f1] (3131) to (3231);
      \end{scope}
      \begin{scope}[colouredcrystaledges]
        \draw[f5] (2151) to (2161);
        \draw[f2] (2151) to (3151);
        \draw[f4] (3141) to (3151);
        \draw[f1] (3141) to (3241);
        \draw[f3] (3141) to (4141);
        \draw[f1] (3231) to (3232);
        \draw[f3] (3231) to (3241);
      \end{scope}
      \fill[color=white,path fading=fade down] ($ (2151) + (-.2,-.5) $) rectangle ($ (3231) + (2,-.8) $);
      \fill[color=white] ($ (2151) + (-.2,-.79) $) rectangle ($ (3231) + (2,-1.1) $);
      \fill[color=white,path fading=fade right] ($ (3231) + (.35,0) $) rectangle ($ (3231) + (.6,-1) $);
      \fill[color=white] ($ (3231) + (.58,0) $) rectangle ($ (3231) + (2,-1) $);
    \end{scope}
    \begin{scope}[xshift=90mm]
      \begin{scope}
        \node (1212) {$\tikz[microbst]
          \node {$2$}
          child { node {$1$}
            child { node {$1$} }
            child { node {$2$} }
          }
          child[missing];$};
        \node (1213) at (0,-1) {$\tikz[microbst]
          \node {$3$}
          child { node {$1$}
            child { node {$1$} }
            child { node {$2$} }
          }
          child[missing];$};
        \node (1214) at (0,-2) {$\tikz[microbst]
          \node {$4$}
          child { node {$1$}
            child { node {$1$} }
            child { node {$2$} }
          }
          child[missing];$};
        \node (1313) at (1,-2) {$\tikz[microbst]
          \node {$3$}
          child { node {$1$}
            child { node {$1$} }
            child { node {$3$} }
          }
          child[missing];$};
        \node (1215) at (0,-3) {$\tikz[microbst]
          \node {$5$}
          child { node {$1$}
            child { node {$1$} }
            child { node {$2$} }
          }
          child[missing];$};
        \node (1314) at (1,-3) {$\tikz[microbst]
          \node {$4$}
          child { node {$1$}
            child { node {$1$} }
            child { node {$3$} }
          }
          child[missing];$};
        \node (1323) at (2,-3) {$\tikz[microbst]
          \node {$3$}
          child { node {$2$}
            child { node {$1$} }
            child { node {$3$} }
          }
          child[missing];$};
        \coordinate (1216) at (0,-4) {};
        \coordinate (1315) at (1,-4) {};
        \coordinate (1324) at (3,-4) {};
        \coordinate (1414) at (2,-4) {};
        \coordinate (2323) at (4,-4) {};
      \end{scope}
      \begin{scope}[colouredcrystaledges]
        \draw[f2] (1212) to (1213);
        \draw[f2] (1213) to (1313);
        \draw[f3] (1213) to (1214);
        \draw[f4] (1214) to (1215);
        \draw[f2] (1214) to (1314);
        \draw[f3] (1313) to (1314);
        \draw[f1] (1313) to (1323);
        \draw[f5] (1215) to (1216);
        \draw[f2] (1215) to (1315);
        \draw[f4] (1314) to (1315);
        \draw[f1] (1314) to (1324);
        \draw[f3] (1314) to (1414);
        \draw[f1] (1323) to (2323);
        \draw[f3] (1323) to (1324);
      \end{scope}
      \fill[color=white,path fading=fade down] ($ (1215) + (-.2,-.5) $) rectangle ($ (1323) + (2,-.8) $);
      \fill[color=white] ($ (1215) + (-.2,-.79) $) rectangle ($ (1323) + (2,-1.1) $);
      \fill[color=white,path fading=fade right] ($ (1323) + (.35,0) $) rectangle ($ (1323) + (.6,-1) $);
      \fill[color=white] ($ (1323) + (.58,0) $) rectangle ($ (1323) + (2,-1) $);
    \end{scope}
    \begin{scope}
      \node[font=\scriptsize] (keyhead) at ($ (1251) + (4mm,-42mm) $) {\textit{Legend}};
      \node (keyf1l) at ($ (keyhead) + (10mm,-.5mm) $) {};
      \node (keyf1r) at ($ (keyf1l) + (10mm,0) $) {};
      \node (keyf2l) at ($ (keyf1l) + (15mm,0mm) $) {};
      \node (keyf2r) at ($ (keyf1r) + (15mm,0mm) $) {};
      \node (keyf3l) at ($ (keyf2l) + (15mm,0mm) $) {};
      \node (keyf3r) at ($ (keyf2r) + (15mm,0mm) $) {};
      \node (keyf4l) at ($ (keyf3l) + (15mm,0mm) $) {};
      \node (keyf4r) at ($ (keyf3r) + (15mm,0mm) $) {};
      \node (keyf5l) at ($ (keyf4l) + (15mm,0mm) $) {};
      \node (keyf5r) at ($ (keyf4r) + (15mm,0mm) $) {};
      \draw[lightgray] ($ (keyhead) + (-7mm,4mm) $) rectangle ($ (keyf5r) + (3mm,-3mm) $);
      \begin{scope}[smallcrystal,labelledcolouredcrystaledges]
        \draw[f1] (keyf1l) to (keyf1r);
        \draw[f2] (keyf2l) to (keyf2r);
        \draw[f3] (keyf3l) to (keyf3r);
        \draw[f4] (keyf4l) to (keyf4r);
        \draw[f5] (keyf5l) to (keyf5r);
      \end{scope}
    \end{scope}
    \node at ($ (1251) + (15mm,-17.5mm) $) {$\underbrace{\hbox{\vrule width 40mm height 0cm depth 0cm}}$};
    \node at ($ (2151) + (15mm,-17.5mm) $) {$\underbrace{\hbox{\vrule width 40mm height 0cm depth 0cm}}$};
    \node at ($ (1215) + (15mm,-17.5mm) $) {$\underbrace{\hbox{\vrule width 40mm height 0cm depth 0cm}}$};
    \node at ($ (1251) + (15mm,-28mm) $) {$\tikz[tinybst]
          \node {$4$}
          child { node {$1$} }
          child { node {$3$}
            child { node {$2$} }
            child[missing]
          };$};
    \node at ($ (2151) + (15mm,-28mm) $) {$\tikz[tinybst]
          \node {$4$}
          child { node {$2$} }
          child { node {$3$}
            child { node {$1$} }
            child[missing]
          };$};
    \node at ($ (1215) + (15mm,-28mm) $) {$\tikz[tinybst]
          \node {$4$}
          child { node {$3$}
            child { node {$1$} }
            child { node {$2$} }
          }
          child[missing];$};
    \node at ($ (1212) + (15mm,7.5mm) $) {$\overbrace{\hbox{\vrule width 40mm height 0cm depth 0cm}}$};
    \node at ($ (2121) + (15mm,7.5mm) $) {$\overbrace{\hbox{\vrule width 40mm height 0cm depth 0cm}}$};
    \node at ($ (1221) + (15mm,7.5mm) $) {$\overbrace{\hbox{\vrule width 40mm height 0cm depth 0cm}}$};
    \node at ($ (1212) + (15mm,12.5mm) $) {$\Gamma(\sylv,1212)$};
    \node at ($ (2121) + (15mm,12.5mm) $) {$\Gamma(\sylv,2121)$};
    \node at ($ (1221) + (15mm,12.5mm) $) {$\Gamma(\sylv,1221)$};
  \end{tikzpicture}
  \caption{Part of the connected components $\Gamma(\sylv,1221)$ and $\Gamma(\sylv,2121)$, which are linked by a
    shape-preserving isomorphism, and part of the connected component $\Gamma(\sylv,1212)$, which is only isomorphic to
    the others as a labelled digraph.}
  \label{fig:gammasylvthree}
\end{figure}

\begin{proposition}
  The relations $\sylvisom$ and $\sylvcong$ coincide.
\end{proposition}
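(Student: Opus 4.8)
The plan is to establish the two inclusions ${\sylvisom}\subseteq{\sylvcong}$ and ${\sylvcong}\subseteq{\sylvisom}$, using the characterization of $\sylvcong$ from \fullref{Corollary}{corol:charsylvcong} and following the template of the proof of \fullref{Proposition}{prop:hypocongsimhypo}. The inclusion ${\sylvisom}\subseteq{\sylvcong}$ is quick: if $u\sylvisom v$, then $\sylvshape{u}=\sylvshape{v}$ because the witnessing isomorphism is shape-preserving, and $u\hypoisom v$ because ${\sylvisom}\subseteq{\hypoisom}$, whence $u\hypocong v$ by \fullref{Proposition}{prop:hypocongsimhypo} and so $\wt{u}=\wt{v}$ (since $\hypocong$ preserves weight); \fullref{Corollary}{corol:charsylvcong} then gives $u\sylvcong v$.

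For ${\sylvcong}\subseteq{\sylvisom}$, the first step is a compatibility lemma: if $u\sylvcong v$ then, for every $i\in\nset$, the operator $\e_i$ is defined on $u$ if and only if it is defined on $v$, and in that case $\e_i(u)\sylvcong\e_i(v)$ — and likewise for $\f_i$. The ``defined if and only if defined'' part hinges on the observation that the predicate ``$w$ contains the subsequence $(i+1)i$'' is invariant along the defining relations $\drel{R}_\sylv$: in a relation $cavb=acvb$ with $a\le b<c$, the restriction of a word to the alphabet $\{i,i+1\}$ is altered only when $\{a,c\}=\{i,i+1\}$, which forces $a=i$, $b=i$, $c=i+1$, and then both sides read $(i+1)\,i\,\bar v\,i$ and $i\,(i+1)\,\bar v\,i$ on that alphabet, each containing the subsequence $(i+1)i$; hence the predicate is constant on $\sylvcong$-classes, and combined with $\wt{u}=\wt{v}$ this settles definedness. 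The relation $\e_i(u)\sylvcong\e_i(v)$ then follows from \fullref{Corollary}{corol:charsylvcong}: by \fullref{Proposition}{prop:eipreservesstd} the operator $\e_i$ preserves standardization and hence $\sylvshape$, and it shifts the weight by a fixed vector, so $\e_i(u)$ and $\e_i(v)$ have equal weight and equal $\sylvshape$.

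With the compatibility lemma in hand, I would build the required isomorphism as follows. First note that, by \fullref{Corollary}{corol:charsylvcong} together with \fullref{Corollary}{corol:stdinsamecomp}, two words that are $\sylvcong$-related and lie in the same connected component of $\Gamma(\hypo)$ must coincide (they share both weight and standardization); thus each $\sylvcong$-class meets each component of $\Gamma(\hypo)$ in at most one vertex. Now given $u\sylvcong v$: reach a highest-weight word $u_0$ of $\Gamma(\hypo,u)$ by raising operators, apply the same operators to $v$ (legitimate by the lemma) to obtain $v_0\in\Gamma(\hypo,v)$ with $u_0\sylvcong v_0$, and then for an arbitrary vertex $w$ of $\Gamma(\hypo,u)$, reached from $u_0$ by lowering operators, apply those same operators to $v_0$ to land on a vertex of $\Gamma(\hypo,v)$ that is $\sylvcong$-related to $w$; call it $\theta(w)$. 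By the ``at most one vertex'' remark $\theta$ is a well-defined bijection $\Gamma(\hypo,u)\to\Gamma(\hypo,v)$; by the lemma it commutes with the $\e_i$ and $\f_i$, hence is a labelled digraph isomorphism, and $\theta(u)=v$; and since $w\sylvcong\theta(w)$ for every $w$, \fullref{Corollary}{corol:charsylvcong} shows $\theta$ is shape-preserving. Therefore $u\sylvisom v$.

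I expect the compatibility lemma — and in particular the bookkeeping showing that containing a subsequence $(i+1)i$ is a genuine $\sylvcong$-invariant — to be the only non-mechanical point; everything else is forced by \fullref{Corollary}{corol:charsylvcong}, \fullref{Proposition}{prop:eipreservesstd}, \fullref{Corollary}{corol:stdinsamecomp}, and the structure already established in the hypoplactic case.
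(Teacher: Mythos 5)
Your proof is correct, and your first inclusion (${\sylvisom}\subseteq{\sylvcong}$) is essentially the paper's. For the converse, however, you take a genuinely different and more self-contained route. The paper simply observes that $u\sylvcong v$ implies $u\hypocong v$ (the hypoplactic monoid being a quotient of the sylvester monoid), so \fullref{Proposition}{prop:hypocongsimhypo} already hands over a quasi-crystal isomorphism $\theta:\Gamma(\hypo,u)\to\Gamma(\hypo,v)$ with $\theta(u)=v$; all that remains is to check that $\theta$ is shape-preserving, which is immediate from \fullref{Corollary}{corol:charsylvcong} together with the fact that $\sylvshapelit$ is constant on connected components (axiom~S1). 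You instead rebuild the isomorphism from scratch via a compatibility lemma asserting that $\sylvcong$ commutes with the quasi-Kashiwara operators --- a sylvester analogue of \fullref[(2)]{Theorem}{thm:kekfcompatible} --- proved by checking that ``contains a subsequence $(i+1)i$'' is invariant along the defining relations $\drel{R}_\sylv$. That check is correct, and the lemma is of independent interest (it is, in effect, the reason the quasi-crystal approach to $\sylv$ works at all), but it is redundant here given that \fullref{Proposition}{prop:hypocongsimhypo} is already available. Two small points in your construction deserve explicit justification: the claim that a $\sylvcong$-class meets a component in at most one vertex uses the fact that a word is determined by its standardization together with its weight (true, but worth stating); and your definition of $\theta$ presumes that every vertex of $\Gamma(\hypo,u)$ is reachable from $u_0$ by lowering operators, which requires the highest-weight vertex of a component to be unique --- this does follow from \fullref{Corollary}{corol:stdinsamecomp} and the fact that the weight of a highest-weight word is determined by its component, but you could sidestep it entirely by transporting arbitrary undirected paths from $u$ rather than directed paths from $u_0$.
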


\begin{proof}
  Let $u,v \in \aA^*$. Suppose that $u \sylvisom v$. Then $u \hypoisom v$ and hence $\wt{u} = \wt{v}$. Furthermore,
  $\sylvshape{u} = \sylvshape{v}$, and so
  \[
\Sh[\big]{\dectree{\std{u}^{-1}}} = \sylvshape{u} = \sylvshape{v} = \Sh[\big]{\dectree{\std{v}^{-1}}}.
  \]
  Hence, by \fullref{Corollary}{corol:charsylvcong}, $u \sylvcong v$.

  Now suppose that $u \sylvcong v$. Then $u \hypocong v$ and so $u \hypoisom v$ and so there is a quasi-crystal
  isomorphism $\theta : \Gamma(\hypo,u) \to \Gamma(\hypo,v)$ with $\theta(u) = v$. By \fullref{Corollary}{corol:charsylvcong},
  \[
    \sylvshape{u} = \Sh[\big]{\dectree{\std{u}^{-1}}} = \Sh[\big]{\dectree{\std{v}^{-1}}} = \sylvshape{v}.
  \]
  Thus, by S2, $\sylvshape{\theta(w)} = \sylvshape{w}$ for all $w \in \Gamma(\hypo,u)$. Hence $\theta$ is a
  $\sylvshapelit$-preserving quasi-crystal isomorphism and so $u \sylvisom v$.
\end{proof}

Our aim here has been to show that $\sylvcong$ can be defined using quasi-crystals; thus we have avoided using the prior
knowledge that $\sylvcong$ is a congruence \cite{hivert_sylvester}. If we had used that result, it would have been
straightforward to recover the fact that $\sylvshapelit$ is an abstract shape.

\subsection{Characterizing highest-weight elements}

\begin{proposition}
  \label{prop:lrpsameshapecomponent}
  Let $T$ be an unlabelled binary tree. The set of words in $\aA^*$ that are postfix readings of right strict binary
  search trees of shape $T$ form a single connected component of $\Gamma(\sylv)$.
\end{proposition}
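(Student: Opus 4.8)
The plan is to reduce the statement to \fullref{Corollary}{corol:stdinsamecomp}, which says that each connected component of $\Gamma(\hypo)$ is precisely a set of the form $\gset{w \in \aA^*}{\std{w} = \tau}$ for a single standard word $\tau$. Since $\Gamma(\sylv)$ and $\Gamma(\hypo)$ are the same graph, the goal becomes: the set of postfix readings of right strict binary search trees of shape $T$ equals $\gset{w \in \aA^*}{\std{w} = \tau}$ for a standard word $\tau$ depending only on $T$.

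First I would invoke \fullref{Lemma}{lem:sylvcanonicalrobinsonschensted}: under the sylvester Robinson--Schensted correspondence $w \mapsto \parens{\psylv{w},\qsylv{w}}$, the postfix readings of right strict binary search trees of shape $T$ are exactly the words whose $\qlit$-symbol equals one fixed decreasing tree $Q_T$ of shape $T$ (namely $\qsylv{\postreading{S}}$ for any right strict binary search tree $S$ of shape $T$, the lemma guaranteeing this is independent of the choice of $S$). As the $\plit$- and $\qlit$-symbols always have the same shape, this yields
\[
\gset{\postreading{S}}{\text{$S$ a right strict binary search tree with } \Sh{S} = T} = \gset{w \in \aA^*}{\qsylv{w} = Q_T}.
\]

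Next I would translate the condition $\qsylv{w} = Q_T$ into a condition on $\std{w}$. By \fullref{Algorithm}{alg:rightstrictinsert} the tree $Q_T$ is a \emph{standard} decreasing tree (its node labels are $1,\dots,m$, where $m$ is the number of nodes of $T$); since $\inreading{\dectree{u}} = u$ for all such $u$, the permutation $\tau_0 := \inreading{Q_T}$ is the unique permutation with $\dectree{\tau_0} = Q_T$, and $\dectree{\cdot}$ is injective on permutations (again by $\inreading{\dectree{u}} = u$). Combining this with \fullref{Proposition}{prop:charrectrees}, which gives $\qsylv{w} = \rrectree{w} = \dectree{\std{w}^{-1}}$, I get
\[
\qsylv{w} = Q_T \iff \std{w}^{-1} = \tau_0 \iff \std{w} = \tau_0^{-1}.
\]
Hence the set of postfix readings in question is $\gset{w \in \aA^*}{\std{w} = \tau_0^{-1}}$, which by \fullref{Corollary}{corol:stdinsamecomp} is exactly one connected component of $\Gamma(\hypo)$, that is, of $\Gamma(\sylv)$.

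There is no real obstacle here: the substantive inputs are \fullref{Lemma}{lem:sylvcanonicalrobinsonschensted} (postfix readings of shape-$T$ trees are the fibre over a single $\qlit$-symbol) and \fullref{Corollary}{corol:stdinsamecomp} (connected components of the quasi-crystal graph are indexed by standard words), both established earlier; the remaining points—standardness of $Q_T$ and injectivity of $\dectree{\cdot}$ on permutations—are routine. Thus the argument is essentially an assembly of existing results, and the closest thing to a delicate step is simply keeping careful track of the passage between $\std{w}$ and $\std{w}^{-1}$ forced by \fullref{Proposition}{prop:charrectrees}.
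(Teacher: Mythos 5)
Your argument is correct, but it takes a different route from the paper's own proof. The paper argues directly at the level of words: it cites two external facts from the sylvester-monoid literature, namely that standardization sends the postfix reading of a shape-$T$ tree to the postfix reading of a (standard) shape-$T$ tree, and that there is exactly one right strict binary search tree of a given shape labelled by a standard word; these together force all the postfix readings in question to have the same standardization, whence \fullref{Corollary}{corol:stdinsamecomp} places them in one component. You instead route everything through the sylvester $\qlit$-symbol: \fullref{Lemma}{lem:sylvcanonicalrobinsonschensted} identifies the set of postfix readings with the fibre of $\qlit_{\sylv}$ over a single decreasing tree $Q_T$, and \fullref{Proposition}{prop:charrectrees} together with the injectivity of $\dectree{\cdot}$ on standard words converts that fibre into a single fibre of $\stdlit$, which is a component by \fullref{Corollary}{corol:stdinsamecomp}. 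This is essentially the derivation the authors themselves point out later, in the remark following \fullref{Proposition}{prop:qeqiffsamecomponent}, that combining that proposition with \fullref{Lemma}{lem:sylvcanonicalrobinsonschensted} ``recovers'' the present statement; there is no circularity, since neither of your inputs depends on this proposition. Your version has two advantages: it stays entirely inside the paper's own machinery rather than importing the two external citations, and it actually establishes the set \emph{equals} a full connected component (both inclusions), whereas the paper's proof as written only shows the set is \emph{contained in} a single component. The cost is that you lean on \fullref{Lemma}{lem:sylvcanonicalrobinsonschensted}, whose justification in the paper is only the one-line claim that it is immediate from the definition of the postfix reading.
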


\begin{proof}
  Suppose $u$ and $v$ are postfix readings of two right strict binary search trees of shape $T$. Then $\std{u}$ and $\std{v}$
  are also postfix readings of two binary search trees of shape $T$ \cite[Lemma~11]{hivert_sylvester}. Since there is exactly
  one right strict binary search of a given shape labelled by a standard word \cite[Note~3]{hivert_sylvester}, it
  follows that $\std{u} = \std{v}$ and so $u$ and $v$ are in the same connected component of $\Gamma(\sylv)$ by
  \fullref{Corollary}{corol:stdinsamecomp}.
\end{proof}

\begin{lemma}
  \label{lem:sylvchareidefined}
  Let $u \in \aA^*$ and $i \in \nset$. Then $\e_i(u)$ is defined if and only if $\rtree{u}$ contains at least one node
  $i+1$, but does not contain a node $i+1$ in the right subtree of the topmost node $i$.
\end{lemma}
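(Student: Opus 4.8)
The plan is to recall the combinatorial characterization of the quasi-Kashiwara operator $\e_i$ (it acts on $u$ by replacing the leftmost $i+1$, provided $u$ contains no subsequence $(i{+}1)i$), and to translate both of these conditions—"$u$ contains a symbol $i+1$" and "$u$ contains no subsequence $(i{+}1)i$"—into statements about the right strict binary search tree $\rtree{u}$. The first condition is immediate: $\e_i(u)$ requires a symbol $i+1$ in $u$, which happens if and only if $\rtree{u}$ has a node labelled $i+1$, since the multiset of node labels of $\rtree{u}$ is exactly the content of $u$. So the real content of the lemma is the equivalence between "$u$ contains no subsequence $(i{+}1)i$" and "$\rtree{u}$ does not contain a node $i+1$ in the right subtree of the topmost node $i$" (where \emph{topmost} means closest to the root; if $u$ has no symbol $i$ at all the latter condition is vacuously satisfied, matching the fact that then $u$ trivially has no $(i{+}1)i$ subsequence).

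First I would set up the structural picture: since $\rtree{u}$ is a right strict binary search tree, all nodes labelled $i$ or $i+1$ form a connected "corridor" hanging off the topmost such node, and among these the $i$'s sit weakly to the left and the $(i{+}1)$'s strictly to the right in the infix order. More precisely, let $T=\rtree{u}$ and consider the topmost node $x$ whose label is $i$ or $i+1$. Using the binary-search-tree property: if $\mathrm{label}(x)=i+1$, then every node labelled $i$ lies in the left subtree of $x$ (because $i<i+1$ and $i$ is strictly less, so a node $i$ can never be in a right subtree of a node $i+1$), while if $\mathrm{label}(x)=i$ there may be nodes $i+1$ in the right subtree of $x$. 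In either case I would identify "the topmost node $i$" with the appropriate node and analyse its right subtree.

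The heart of the argument is then the following. Recall $\postreading{T}$ lies in the sylvester class of $T$, and more relevantly $\inreading{T}$—the infix reading—is the unique weakly increasing word of the same content (\fullref{Proposition}{prop:infixreading}); whether $u$ has a $(i{+}1)i$ subsequence is however \emph{not} an invariant of $\sylvcong$ in general, so I must work with $\rtree{u}$ directly rather than with an arbitrary reading. The cleanest route: a word $u$ has no subsequence $(i{+}1)i$ if and only if, restricted to the letters $i$ and $i+1$, $u$ looks like $i^a(i{+}1)^b$. I would show this restricted pattern is detected by the tree as follows. Insert the letters of $u$ one at a time by right strict leaf insertion (reading $u$ right-to-left, per \fullref{Algorithm}{alg:rightstrictinsert}); track the sub-structure on labels $\{i,i+1\}$. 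If $u$ restricted to $\{i,i+1\}$ is $i^a(i{+}1)^b$, then one checks by induction on insertion that all the $i+1$'s attach below-and-right while all $i$'s attach below-and-left, forming a topmost node $i$ whose right subtree among these labels is empty of $i+1$; conversely, if $u$ has an $(i{+}1)i$ subsequence, pick the first such pair and argue (again by following the insertion, and using that an $i$ inserted after an $i+1$ has already been placed must descend into some node's subtree in a way that forces it left of or below a previously placed $i+1$, while the topmost $i$ ends up with that $i+1$ in its right subtree) that $\rtree{u}$ must have an $i+1$ in the right subtree of the topmost $i$.

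The main obstacle I anticipate is the second (converse) direction: carefully tracking, through the leaf-insertion process, \emph{which} node becomes the topmost $i$ and certifying that a witnessing $(i{+}1)i$ subsequence forces an $i+1$ into its right subtree—this requires a clean invariant maintained during insertion (for instance: "at every stage, within the $\{i,i{+}1\}$-substructure, there is a node $i+1$ strictly to the right of some node $i$ in infix order if and only if some suffix of the already-processed word, read left-to-right, contains $(i{+}1)i$"). Once that invariant is pinned down the rest is bookkeeping. An alternative, possibly slicker, approach is to avoid insertion entirely: use \fullref{Proposition}{prop:infixreading} together with the fact that $\e_i$ is compatible with the quasi-crystal structure (\fullref{Proposition}{prop:eipreservesstd} and the invariance of $\sylvshapelit$) to reduce to a canonical representative of the $\sylvcong$-class, e.g. $\postreading{T}$, and then check the claim on $\postreading{T}$ directly by reading off its $(i{+}1)i$-pattern from the shape of $T$; but since "has a $(i{+}1)i$ subsequence" is not $\sylvcong$-invariant this reduction is not automatic, so I would fall back on the direct insertion analysis sketched above.
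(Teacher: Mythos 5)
Your reduction of the lemma to the two conditions (``$u$ contains an $i+1$'' and ``$u$ has no subsequence $(i+1)i$'') is correct, and your instinct to analyse \fullref{Algorithm}{alg:rightstrictinsert} directly is exactly what the paper does. But the converse direction, which you yourself flag as ``the main obstacle'', is left genuinely unresolved: you say the rest is bookkeeping ``once that invariant is pinned down'', yet pinning it down \emph{is} the proof, and the one candidate invariant you offer does not work. In a right strict binary search tree the infix reading is weakly increasing (\fullref{Proposition}{prop:infixreading}), so \emph{every} node $i+1$ is strictly to the right of \emph{every} node $i$ in infix order; your condition ``there is a node $i+1$ strictly to the right of some node $i$ in infix order'' therefore just says ``both letters occur'', which is not equivalent to ``the processed suffix contains $(i+1)i$'' (consider a processed suffix $12$), so the biconditional is false and cannot drive the induction.

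The two facts that actually close the argument are these. First, leaf insertion only ever adds new nodes as leaves, so a node inserted earlier can never lie in the subtree of a node inserted later; since \fullref{Algorithm}{alg:rightstrictinsert} reads $u$ from right to left, ``no subsequence $(i+1)i$'' means every $i+1$ is inserted before every $i$, hence before the topmost $i$ exists, hence outside its (right) subtree. Second, for the converse, compare the insertion paths of a letter $i$ and a letter $i+1$: at a node labelled $x$ they make the same left/right decision unless $x=i$ (where $i$ goes left and $i+1$ goes right). All letters $i$ therefore form a left chain under the first-inserted $i$, which is the topmost node $i$ and has no ancestor labelled $i$; consequently any $i+1$ inserted \emph{after} that node exists follows the same path down to it and then branches right, landing in its right subtree. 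This is precisely the content of the paper's (very terse) proof, which asserts that ``every $i+1$ inserted before every $i$'' is equivalent to ``no $i+1$ in the right subtree of the topmost $i$''. (A minor side point: your claim that possession of a $(i+1)i$ subsequence is not a $\sylvcong$-invariant is actually false --- the lemma itself, once proved, shows it is determined by $\rtree{u}$ --- but since you only used that claim to decline a shortcut, it does no harm.)
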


\begin{proof}
  By definition, $\e_i(u)$ is defined if and only if every symbol $i+1$ is to the right of every symbol $i$ in $u$. In
  terms of \fullref{Algorithm}{alg:rightstrictinsert}, this is equivalent to every symbol $i+1$ being inserted
  \emph{before} every symbol $i$, which in turn is equivalent to no symbol $i+1$ being inserted into the right subtree
  of the topmost symbol $i$.
\end{proof}

\begin{proposition}
  \label{prop:sylvcharhighestweight}
  Let $u \in \aA^*$. Then $u$ is a highest-weight word if and only if $\rtree{u}$ has the following property: if the
  right interval partition of $\rtree{u}$ has $\ell$ parts, then for $a \in \set{1,\ldots,\ell} \subseteq \aA$, the nodes in
  the $a$-th right interval are labelled by $a$.
\end{proposition}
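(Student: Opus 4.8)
The plan is to translate the hypothesis that $u$ is highest-weight --- i.e.\ that $\e_i(u)$ is undefined for every $i \in \nset$ --- into a statement purely about the tree $T = \rtree{u}$ by means of \fullref{Lemma}{lem:sylvchareidefined}: $u$ is highest-weight if and only if, for every $i$, either $T$ has no node labelled $i+1$, or $T$ has a node labelled $i+1$ in the right subtree of its topmost node labelled $i$. I would then show that both this condition and the ``right interval'' condition in the statement are equivalent to the single condition $(\star)$: \emph{the labels occurring in $T$ are exactly $\{1,\ldots,m\}$ for some $m \geq 0$, and for each $a \in \{1,\ldots,m-1\}$ the topmost node labelled $a$ has a non-empty right subtree.}

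First I would record two elementary structural facts about right strict binary search trees. (i)~For a fixed label $a$, the nodes of $T$ labelled $a$ form a chain under the ancestor relation, with every one but the topmost lying in the left subtree of the topmost; hence the topmost node labelled $a$ is the last node labelled $a$ in the infix traversal, and since by \fullref{Proposition}{prop:infixreading} the infix reading is the weakly increasing word $1^{c_1}2^{c_2}\cdots$, it is the last node of the $a$-th ``label block'' $B_a$ of that reading. (ii)~If a node has a non-empty right subtree, its infix successor lies in that subtree, hence (by right strictness) carries a strictly larger label; so such a node must be the last node of its label block. In particular, the boundaries of the right interval partition always lie among the boundaries of the partition into label blocks.

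For the forward implication, suppose $u$ is highest-weight. Applying the tree condition with $i = b-1$ to any node labelled $b \geq 2$ occurring in $T$ forces $T$ to contain a node labelled $b-1$; a descending induction then shows the labels of $T$ are exactly $\{1,\ldots,m\}$. Applying the tree condition with $i=a$ for each $a \in \{1,\ldots,m-1\}$ yields a node labelled $a+1$ in the right subtree of the topmost node labelled $a$, so that subtree is non-empty; thus $(\star)$ holds. By fact~(ii) together with fact~(i), the nodes of $T$ with non-empty right subtree are then exactly the last nodes of the blocks $B_1,\ldots,B_{m-1}$, so the right interval partition coincides with the partition into label blocks; hence $\ell = m$ and the $a$-th right interval is $B_a$, all of whose nodes are labelled $a$, as required.

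For the converse, suppose $T$ has the stated right-interval property (the case of empty $T$ being trivial). Then the infix reading is $1^{c_1}\cdots\ell^{c_\ell}$ with every $c_a \geq 1$, so the labels of $T$ are exactly $\{1,\ldots,\ell\}$ and each right interval is a label block $B_a$; by fact~(ii) this forces each topmost node labelled $a$ (for $a < \ell$) to have a non-empty right subtree, giving $(\star)$ with $m=\ell$. The one genuinely non-routine step --- which I expect to be the main obstacle --- is upgrading ``non-empty right subtree of the topmost node labelled $a$'' to ``contains a node labelled exactly $a+1$'': the leftmost node of that right subtree is the infix successor of the topmost node labelled $a$, which by fact~(i) is the last node of $B_a$; since all of $1,\ldots,\ell$ occur, the next node in infix order is the first node of $B_{a+1}$ and so is labelled $a+1$. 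With this in hand, \fullref{Lemma}{lem:sylvchareidefined} shows $\e_i(u)$ is undefined for every $i$ (for $i < \ell$ by the preceding sentence, and for $i \geq \ell$ because $T$ has no node labelled $i+1$), so $u$ is highest-weight.
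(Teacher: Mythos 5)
Your proof is correct and follows essentially the same route as the paper's: both directions hinge on \fullref{Lemma}{lem:sylvchareidefined}, on the downward-closedness of the label set forced by highest weight, and on the weakly increasing infix reading identifying right intervals with label blocks. Your write-up is somewhat more careful than the paper's (which dismisses the key step of the converse with ``it is immediate'' and contains a couple of off-by-one slips around $\ell$ versus $\ell+1$), but the underlying argument is the same.
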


Before proceeding to the proof, an example is helpful. The word $u = 1121335432$ is highest weight, and $\rtree{u}$ is
\[
\begin{tikzpicture}[baseline=-7.5mm]
  \begin{scope}[tinybst]
    \node (root) {$2$}
    child[sibling distance=16mm] { node (0) {$1$}
      child { node (00) {$1$}
        child { node (000) {$1$} }
        child[missing]
      }
      child { node (01) {$2$} }
    }
    child[sibling distance=16mm] { node (1) {$3$}
      child { node (10) {$3$}
        child { node (100) {$3$} }
        child[missing]
      }
      child { node (11) {$4$}
        child[missing]
        child { node (111) {$5$} }
      }
    };
  \end{scope}
\end{tikzpicture},
\]
which satisfies the condition in \fullref{Proposition}{prop:sylvcharhighestweight}. On the other hand, the word
$u = 5451761524$ does not have highest weight, and $\rtree{u}$ is the right-hand tree in \eqref{eq:bsteg}, which does
not satisfy the condition.

\begin{proof}
  Suppose $u$ is a highest-weight word. Then $\e_i(u)$ is undefined for all $i$. Let $\ell$ be the number of right
  intervals of $\rtree{u}$. Since the infix reading of $\rtree{u}$ is a weakly increasing word that must increase
  immediately after the rightmost nodes in the first $\ell-1$ right intervals, there are at least $\ell$ distinct symbols
  among the labels in $\rtree{u}$.

  If $i > 1$ is a symbol in $u$, then $i-1$ is also a symbol in $u$, for otherwise $\e_{i-1}(u)$ would be
  defined. Consequently, $u$ contains the symbols $1,\ldots,m$ for some $m \geq \ell$. If $m > 1$, then, by
  \fullref{Lemma}{lem:sylvchareidefined}, there is a symbol $2$ in the right subtree of the topmost symbol $1$. Thus the
  symbols $1$ must label all nodes in the first right interval of $\rtree{u}$. Proceeding inductively and using
  \fullref{Lemma}{lem:sylvchareidefined}, one sees that $a$ must label all nodes in the $a$-th right interval. Hence
  $\rtree{u}$ has the form described in the statement.

  Now suppose that $\rtree{u}$ has the form described. It is immediate that $a+1$ is in the right subtree of the topmost
  node $a$ for all $a \in \set{1,\ldots,\ell}$, and so $\e_i(u)$ is undefined for $i \leq \ell$. Since $\ell+1$ is the maximum
  symbol in $\rtree{u}$, it follows that $\e_i(u)$ is undefined for all $i$ by \fullref{Lemma}{lem:sylvchareidefined},
  and so $u$ has highest weight.
\end{proof}

Note that it would be possible to prove \fullref{Proposition}{prop:sylvcharhighestweight} by appealing to the
corresponding result for the hypoplactic monoid and quasi-ribbon tableaux \cite[Proposition~6(2)]{cm_hypoplactic} to
characterize highest weight words, but it seems clearer to give a direct proof that reasons only about trees.

\subsection{The sylvester version of the Robinson--Schensted correspondence}

The following result clarifies the interaction of the quasi-crystal structure and the sylvester version of the
Robinson--Schensted correspondence:

\begin{proposition}
  \label{prop:qeqiffsamecomponent}
  Let $u,v \in \aA^*$. The words $u$ and $v$ lie in the same connected component of $\Gamma(\sylv)$ if and only if
  $\qsylv{u} = \qsylv{v}$.
\end{proposition}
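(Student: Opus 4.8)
The plan is to reduce both sides of the claimed equivalence to a statement about standardizations. First I would observe that $\Gamma(\sylv)$ is literally the same graph as $\Gamma(\hypo)$ (only the notion of isomorphism differs), so $u$ and $v$ lie in the same connected component of $\Gamma(\sylv)$ if and only if they lie in the same connected component of $\Gamma(\hypo)$. By \fullref{Corollary}{corol:stdinsamecomp} this happens precisely when $\std{u} = \std{v}$.

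Next I would unwind the right-hand side: by definition $\qsylv{u} = \rrectree{u}$, and by \fullref[(2)]{Proposition}{prop:charrectrees} this equals $\dectree{\std{u}^{-1}}$. Hence $\qsylv{u} = \qsylv{v}$ if and only if $\dectree{\std{u}^{-1}} = \dectree{\std{v}^{-1}}$. So the proposition reduces to the assertion that, for any $u, v \in \aA^*$,
\[
\std{u} = \std{v} \iff \dectree{\std{u}^{-1}} = \dectree{\std{v}^{-1}}.
\]
The forward implication is trivial, since $\std{u} = \std{v}$ gives $\std{u}^{-1} = \std{v}^{-1}$ and hence equal decreasing trees. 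For the converse, I would use the fact that $\std{u}^{-1}$ and $\std{v}^{-1}$ are words without repeated symbols (they are inverses of permutations in one-line notation), and that the infix reading of $\dectree{w}$ equals $w$ for every such $w$ (the Remark following the definition of $\dectree{\cdot}$). Thus the map $\dectree{\cdot}$ is injective on repetition-free words, so $\dectree{\std{u}^{-1}} = \dectree{\std{v}^{-1}}$ forces $\std{u}^{-1} = \std{v}^{-1}$, and therefore $\std{u} = \std{v}$.

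Chaining these equivalences together yields the result. I do not anticipate a genuine obstacle here: the only point that warrants a sentence of care is the injectivity of $\dectree{\cdot}$ on words with distinct symbols, and that is immediate from the observation that the infix traversal inverts the construction of the decreasing tree. The proof is essentially a bookkeeping exercise combining \fullref{Corollary}{corol:stdinsamecomp}, \fullref{Proposition}{prop:charrectrees}, and the definition of $\qsylv{\cdot}$.
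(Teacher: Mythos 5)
Your proposal is correct and follows essentially the same route as the paper: both directions reduce to $\std{u}=\std{v}$ via \fullref{Corollary}{corol:stdinsamecomp}, and the equivalence with $\qsylv{u}=\qsylv{v}$ is obtained from \fullref[(2)]{Proposition}{prop:charrectrees} together with the fact that a repetition-free word is recovered as the infix reading of its decreasing tree (the paper invokes \fullref{Proposition}{prop:infixreading} for this injectivity step, but the content is the same). No gaps.
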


\begin{proof}
  Suppose $\qsylv{u} = \qsylv{v}$. Hence $\rrectree{u} = \rrectree{v}$ and by \fullref{Proposition}{prop:charrectrees},
  $\dectree{\std{u}^{-1}} = \dectree{\std{u}^{-1}}$. Thus, by \fullref{Proposition}{prop:infixreading}, $\std{u}^{-1}$
  and $\std{v}^{-1}$ are the infix readings of the same labelled binary tree, and so $\std{u} = \std{v}$. Since $u$ and
  $\std{u}$ lie in the same connected component of $\Gamma(\sylv)$ by \fullref{Corollary}{corol:stdinsamecomp}, and
  similarly for $v$ and $\std{v}$, it follows that $u$ and $v$ are in the same connected component of $\Gamma(\sylv)$.

  On the other hand, if $u$ and $v$ lie in the same connected component of $\Gamma(\sylv)$, then $\std{u} = \std{v}$ by
  \fullref{Corollary}{corol:stdinsamecomp} and so $\qsylv{u} = \qsylv{v}$.
\end{proof}

Thus the sylvester $\qlit$-symbol indexes connected components of the quasi-crystal graph and the sylvester
$\plit$-symbol locates a word within that component. Note that combining \fullref{Proposition}{prop:qeqiffsamecomponent}
and \fullref{Lemma}{lem:sylvcanonicalrobinsonschensted}, one recovers \fullref{Proposition}{prop:lrpsameshapecomponent}.

The Knuth `hook-length' formula shows that the number of descending trees of the same shape as $T$ is
$n!/\prod_{v \in T} h_v$, where $h_v$ is the number of nodes in the subtree rooted at $v$ \cite[\S~5.1.4,
Exer.~20]{knuth_taocp3}. This gives a formula for the number of sylvester $\qlit$-symbols of a given shape, and thus for
the size of sylvester classes corresponding to left strict binary search trees of a given shape, and thus for the number
of quasi-crystal components that are mapped to a given one by a $\sylvshapelit$-preserving quasi-crystal isomorphism.

In the infinite-rank setting, there are infinitely many sylvester $\plit$-symbols of a given shape. On restricting to
the rank $n$ case, the number of sylvester $\plit$-symbols of a given shape can be calculated using the same formula as
for the hypoplactic monoid \cite[Theorem~4]{cm_hypoplactic}:

\begin{proposition}
  Let $T$ be a binary search tree for which the right interval partition has $\ell$ parts. The number of right strict binary
  search trees of shape $T$ labelled by symbols from $\aA_n$ is
  \[
  \begin{cases}
    \binom{n+\tlen{T}-\ell}{n-\ell} & \text{if $\ell \leq n$} \\
    0 & \text{if $\ell > n$}.
  \end{cases}
  \]
\end{proposition}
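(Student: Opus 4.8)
The plan is to reduce the count to a standard stars‑and‑bars computation once a combinatorial description of the trees in question is in hand. Write $m = \tlen{T}$ and list the nodes of $T$ in infix order as $x_1,\ldots,x_m$; let $j_1 < \cdots < j_{\ell-1}$ be the indices such that $x_{j_h}$ has a non‑empty right subtree. These are exactly the $\ell-1$ internal boundaries between consecutive right intervals of $T$. A labelled binary tree of shape $T$ amounts to an assignment of a label $a_i \in \aA$ to each $x_i$, i.e.\ a word $a_1\cdots a_m$, and the first step is to show: such a word yields a \emph{right strict} binary search tree of shape $T$ if and only if $a_1 \le a_2 \le \cdots \le a_m$ and, in addition, $a_{j_h} < a_{j_h+1}$ for each $h \in \{1,\ldots,\ell-1\}$. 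The forward implication is immediate: by \fullref{Proposition}{prop:infixreading} the infix reading of a right strict binary search tree is weakly increasing, and if $x_{j_h}$ has a non‑empty right subtree then its infix successor $x_{j_h+1}$ is the leftmost node of that right subtree, so the defining strictness condition forces $a_{j_h} < a_{j_h+1}$. For the converse I would simply check the two defining conditions at each node: weak monotonicity of $a_1\cdots a_m$ makes every label weakly dominate all labels in the node's left subtree, while if a node $x_{j_h}$ has a non‑empty right subtree the least label occurring there is $a_{j_h+1} > a_{j_h}$, whence (again by weak monotonicity) the node's label strictly precedes every label in its right subtree. Thus no extra induction is needed.

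With this description the right strict binary search trees of shape $T$ with labels in $\aA_n$ correspond bijectively to the weakly increasing words $a_1\cdots a_m$ over $\{1,\ldots,n\}$ that increase strictly at the $\ell-1$ positions $j_1,\ldots,j_{\ell-1}$. If $\ell > n$ there are none of these, since $a_{j_1} < a_{j_1+1} \le a_{j_2} < \cdots < a_{j_{\ell-1}+1}$ would require at least $\ell$ distinct values in an $n$‑letter alphabet; this gives the second case of the formula. If $\ell \le n$, let $d_i$ denote the number of indices $h$ with $j_h < i$, and set $c_i = a_i - d_i$. Subtracting one extra unit at each boundary exactly absorbs the strict jump there, so $c_1\cdots c_m$ is weakly increasing; the estimates $c_i \ge 1$ and $c_m \le n-\ell+1$ are straightforward, and $a_i \mapsto c_i$ is a bijection from our set of constrained words onto the set of \emph{all} weakly increasing words of length $m$ over $\{1,\ldots,n-\ell+1\}$, with inverse adding $d_i$ back to each entry.

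Finally I would invoke the elementary count: the number of weakly increasing words of length $m$ over a $k$‑letter alphabet is $\binom{k+m-1}{m}$. Taking $k = n-\ell+1$ and $m = \tlen{T}$ gives $\binom{n-\ell+\tlen{T}}{\tlen{T}} = \binom{n+\tlen{T}-\ell}{n-\ell}$, the asserted value. (Alternatively, one could note that the constrained words above are precisely the column readings of quasi‑ribbon tableaux on the composition whose parts are the sizes of the right intervals of $T$, and deduce the count from \cite[Theorem~4]{cm_hypoplactic}.) The one step that requires genuine care is the characterization in the first paragraph — in particular being precise about which labellings of the fixed shape $T$ are right strict binary search trees, and why the strict‑increase positions are exactly the $j_h$; the remaining counting is routine.
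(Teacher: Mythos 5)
Your proof is correct, but it takes a genuinely different route from the paper's. You argue directly and elementarily: you characterise the labellings of the fixed shape $T$ that produce right strict binary search trees as the weakly increasing words (in infix order) that increase strictly at the $\ell-1$ boundaries between consecutive right intervals, and then count these by the standard shift-and-stars-and-bars bijection onto all weakly increasing words over an $(n-\ell+1)$-letter alphabet. Both halves of your characterisation are sound (the converse direction works exactly as you say, because the left and right subtrees of a node occupy contiguous infix blocks immediately before and after it), the degenerate case $\ell>n$ is handled correctly, and $\binom{n+\tlen{T}-\ell}{\tlen{T}}=\binom{n+\tlen{T}-\ell}{n-\ell}$ matches the stated value. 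The paper instead stays inside the quasi-crystal framework: it takes a highest-weight word $u$ with $\rtree{u}$ of shape $T$, uses \fullref{Proposition}{prop:sylvcharhighestweight} to see that $u$ contains exactly the symbols $1,\ldots,\ell$, identifies $\wt{u}$ with a composition having $\ell$ parts and total $\tlen{T}$, and then transfers the count through the isomorphism $\Gamma(\sylv,u)=\Gamma(\hypo,u)\cong\Gamma(\hypo,v)$ to the already-established hypoplactic formula \cite[Theorem~4]{cm_hypoplactic} --- essentially the bridge you mention in your closing parenthesis. Your version buys self-containedness (it needs only \fullref{Proposition}{prop:infixreading} and no crystal machinery at all); the paper's version buys economy and makes the conceptual point that the count depends only on the isomorphism class of the quasi-crystal component, so the sylvester and hypoplactic answers must coincide. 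Either is acceptable; just make sure, if you keep your route, to state explicitly that distinct admissible label words give distinct trees of shape $T$, so that counting words really is counting trees.
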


\begin{proof}
  Consider a highest-weight word $u \in \aA_n^*$ such that $\rtree{u}$ has shape $T$. By
  \fullref{Proposition}{prop:sylvcharhighestweight}, $u$ contains the symbols $\set{1,\ldots,\ell}$. Let $\alpha$ be the
  shape of the quasi-ribbon tableau $\phypo{u}$. Since $u$ is also a highest-weight word in $\Gamma(\sylv)$ and thus in
  $\Gamma(\hypo)$, it follows that the column reading $v$ of $\phypo{u}$ is also a highest weight word in $\Gamma(\hypo)$ and so
  $\alpha = \wt{v} = \wt{u}$ \cite[Proposition~6]{cm_hypoplactic}. In particular, $\clen{\alpha} = \ell$ and
  $\cwt{\alpha} = \wlen{v} = \wlen{u} = \tlen{T}$.

  Since $\Gamma(\sylv,u) = \Gamma(\hypo,u)$ is isomorphic to $\Gamma(\hypo,v)$, they both contain the same number of
  words that lie in $\aA_n^*$, and so the result follows by subsituting the given values for $\clen{\alpha}$ and
  $\cwt{\alpha}$ into \cite[Theorem~4]{cm_hypoplactic}.
\end{proof}

\subsection{Counting factorizations}

One interpretation of the Littlewood--Richardson rule \cite[ch.~5]{fulton_young} is that the Littlewood--Richardson
coefficients $c_{\lambda\mu}^\nu$ are the number of different factorizations of an element of $\plac$ corresponding to a
Young tableau of shape $\nu$ into elements corresponding to a tableau of shape $\lambda$ and a tableau of shape
$\mu$. In particular, it shows that the number of such factorizations is independent of the content of the tableau, and
is rather dependent only on the shape. The quasi-crystal structure yields a similar result for $\hypo$
\cite[Theorem~5]{cm_hypoplactic}, and a minimal modification to the proof gives the similar result for the sylvester
monoid:

\begin{theorem}
  \label{thm:sylvfactorizations}
  The number of distinct factorizations of an element of the sylvester monoid corresponding to a right strict binary
  search tree of shape $T$ into elements that correspond to right strict binary search trees of shapes $U$ and
  $V$ is dependent only of $T$, $U$, and $V$, and not on the content of the element.
\end{theorem}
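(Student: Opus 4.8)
The plan is to adapt the proof of the hypoplactic analogue \cite[Theorem~5]{cm_hypoplactic}, replacing quasi-ribbon tableaux and their row readings by right strict binary search trees and their postfix readings. First I would fix, for each unlabelled binary tree $X$, the connected component $\Gamma_X$ of $\Gamma(\sylv)$ consisting of the postfix readings of the right strict binary search trees of shape $X$; this is genuinely a single component by \fullref{Proposition}{prop:lrpsameshapecomponent}, and it is determined by $X$. Since $\psylv{\postreading{P}} = P$ for every right strict binary search tree $P$, a factorization of the element of $\sylv$ corresponding to a tree $P$ of shape $T$ into elements corresponding to trees of shapes $U$ and $V$ is the same datum as a pair $(a,b)$ with $a \in \Gamma_U$, $b \in \Gamma_V$ and $ab \sylvcong \postreading{P}$. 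Hence it is enough to show that, for a word $w \in \Gamma_T$,
\[
N(w) = \#\gset[\big]{(a,b)}{a \in \Gamma_U,\ b \in \Gamma_V,\ ab \sylvcong w}
\]
takes the same value for every $w$ in the connected component $\Gamma_T$; as $\Gamma_T$, $\Gamma_U$, $\Gamma_V$ depend only on $T$, $U$, $V$, the theorem will follow.

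To see that $N$ is constant on $\Gamma_T$, I would transport factorizations along the quasi-Kashiwara operators. Since $\Gamma_T$ is connected, it suffices to compare $N(w)$ and $N(\f_i(w))$ for any $i$ such that both $w$ and $\f_i(w)$ lie in $\Gamma_T$ (the $\e_i$ case being symmetric). Given $(a,b)$ counted by $N(w)$: from $ab \sylvcong w$ together with the compatibility of $\sylvcong$ with the quasi-Kashiwara operators — which holds because $\sylvcong$ coincides with $\sylvisom$ and the isomorphisms defining $\sylvisom$ preserve edges of $\Gamma(\hypo)$ — one gets that $\f_i(ab)$ is defined and $\f_i(ab) \sylvcong \f_i(w)$. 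Directly from the definition of $\f_i$, the rightmost symbol $i$ of $ab$ lies in $b$ if $b$ contains an $i$ and in $a$ otherwise, so $\f_i(ab)$ equals $a\,\f_i(b)$ or $\f_i(a)\,b$, with the displayed operator defined in each case; and since $\f_i$ carries a vertex of $\Gamma(\hypo) = \Gamma(\sylv)$ to another vertex of the same component, the modified factor stays in $\Gamma_U$ or $\Gamma_V$. This gives a map from the pairs counted by $N(w)$ to those counted by $N(\f_i(w))$; the parallel construction with $\e_i$ gives a map in the reverse direction, and \fullref{Lemma}{lem:efinverse} should yield that the two are mutually inverse, whence $N(w) = N(\f_i(w))$.

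The hard part will be checking cleanly that the two transport maps really are mutually inverse, rather than merely both surjective: one must confirm that the case distinction made when applying $\f_i$ to a product is exactly reversed by the case distinction made when applying $\e_i$. This should come from two observations — that applying $\f_i$ to a factor strictly increases its number of symbols $i+1$, and that when $\f_i(ab)$ is defined and $b$ contains an $i$ the factor $a$ contains no $i+1$ (otherwise $ab$ would contain a subsequence $(i+1)i$) — but making the bookkeeping airtight is where the work lies. Everything else needed is already in hand: that $\sylvcong$ is a congruence compatible with the quasi-Kashiwara operators, that $\sylvshapelit$ is an abstract shape, and the identification of the components $\Gamma_T$, $\Gamma_U$, $\Gamma_V$ furnished by \fullref{Proposition}{prop:lrpsameshapecomponent}.
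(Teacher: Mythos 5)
Your proposal is correct and follows essentially the same route as the paper's proof: both reduce to counting pairs of postfix words of shapes $U$ and $V$, transport factorizations along the quasi-Kashiwara operators using the fact that $\e_i$ and $\f_i$ act on exactly one factor of a product, and invoke \fullref{Proposition}{prop:lrpsameshapecomponent} to see that the operators preserve postfix words and their shapes and that all postfix words of shape $T$ lie in a single component. The only difference is presentational: the paper cites \cite[Lemma~2]{cm_hypoplactic} for the product case-split and simply asserts that the induced maps are mutually inverse, whereas you re-derive the case-split and flag the inverse-checking as the remaining bookkeeping.
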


\begin{proof}
  For the purposes of this proof, a word $u \in \aA^*$ is a \defterm{postfix word} if it is the postfix reading of a
  binary search tree; a postfix word $u$ has shape $U$, where $U$ is an unlabelled binary tree, if it is the postfix
  reading of a binary search tree of shape $U$.

  Let $T$, $U$, and $V$ be unlabelled binary trees. Let $w \in \aA^*$ be a postfix word of shape $T$. Let
  \begin{align*}
  S^w_{U,V} = \gsetsplit[\big]{(u,v)}{\;&\text{$u,v \in \aA_n^*$ are postfix words,} \\
    &\text{$u$ has shape $U$,} \\
    &\text{$v$ has shape $V$,} \\
    &w \sylvcong uv}.
  \end{align*}
  So $S^w_{U,V}$ is a complete list of factorizations of $w$ into elements whose corresponding right strict
  binary search trees have shapes $U$ and $V$.  Let $i \in \nset$ and suppose $\e_i(w)$ is defined. Pick some
  pair $(u,v) \in S^w_{U,V}$. Then $w \sylvcong uv$ and so $\e_i(uv)$ is defined, and $\e_i(w) = \e_i(uv)$. By
  \cite[Lemma~2]{cm_hypoplactic}, either $\e_i(uv) = u\e_i(v)$, or $\e_i(uv) = \e_i(u)v$. In the former case,
  $(u,\e_i(v)) \in S^{\e_i(w)}_{U,V}$, and in the latter case $(\e_i(u),v) \in S^{\e_i(w)}_{U,V}$, since $\e_i$ preserves being a
  postfix word and the shape of the corresponding right strict binary search tree by
  \fullref{Proposition}{prop:lrpsameshapecomponent}. So $\e_i$ induces an injective map from $S^w_{U,V}$ to
  $S^{\e_i(w)}_{U,V}$. It follows that $\f_i$ induces the inverse map from $S^{\e_i(w)}_{U,V}$ to
  $S^w_{U,V}$. Hence $\abs{S^w_{U,V}} = \abs{S^{\e_i(w)}_{U,V}}$. Similarly, if $\f_i(w)$ is
  defined, then $\abs{S^w_{U,V}} = \abs{S^{\f_i(w)}_{U,V}}$. Since all the postfix words whose
  corresponding binary search trees have shape $T$ lie in the same connected component of $\Gamma(\sylv)$ by
  \fullref{Proposition}{prop:lrpsameshapecomponent}, it follows that $|S^w_{U,V}|$ is dependent only on
  $T$, not on $w$.
\end{proof}

\subsection{Satisfying an identity}

Another application of the quasi-crystal structure was to prove that the hypoplactic monoid satisfies a non-trivial
identity \cite[Theorem~6]{cm_hypoplactic}. Similarly, it is possible to use the quasi-crystal structure to prove that
the sylvester monoid satisfies an identity.

\begin{theorem}
  \label{thm:sylvidentity}
  The sylvester monoid satisfies the identity $xyxy = yxxy$.
\end{theorem}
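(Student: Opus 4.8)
The plan is to follow the template of \cite[Theorem~6]{cm_hypoplactic}, exploiting the quasi\chyph crystal structure through \fullref{Corollary}{corol:charsylvcong}. Fix $x,y\in\aA^*$; the goal is to show $xyxy\sylvcong yxxy$. First, $\wt{xyxy}=\wt{yxxy}$, since the two words are obtained by concatenating two copies of $x$ and two copies of $y$. By \fullref{Corollary}{corol:charsylvcong} it therefore remains to prove $\Sh[\big]{\dectree{\std{xyxy}^{-1}}}=\Sh[\big]{\dectree{\std{yxxy}^{-1}}}$; and since $\dectree{\std{w}^{-1}}=\rrectree{w}$ by \fullref{Proposition}{prop:charrectrees}, while $\rrectree{w}$ is built alongside $\rtree{w}$ and shares its shape at every step, this is exactly $\Sh{\rtree{xyxy}}=\Sh{\rtree{yxxy}}$. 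Because equal weights together with equal $\rtree{}$\chyph shapes force equality of the trees (again by \fullref{Corollary}{corol:charsylvcong}), it suffices to establish the clean identity $\rtree{xyxy}=\rtree{yxxy}$.

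To attack this I would read right strict insertion from right to left, using that when one inserts a word on top of an existing right strict binary search tree $T$ the nodes of $T$ survive unchanged and the new symbols are attached only as leaves. Writing $\rho(w,T)$ for the tree obtained by inserting the letters of $w$ into $T$ from right to left, both sides reduce --- after the common processing of the right half $xy$ --- to insertions into $T:=\rtree{xy}$: one has $\rtree{xyxy}=\rho(x,\rho(y,T))$ and $\rtree{yxxy}=\rho(y,\rho(x,T))$. The heart of the matter is then that $\rho(x,\rho(y,T))=\rho(y,\rho(x,T))$ \emph{for this particular $T$}, the reason being that $T=\rtree{xy}$ already contains a copy of every letter of $x$ and of $y$, arranged so as to record their relative order; I would prove this by determining precisely where a further copy of a symbol $a$ attaches in a right strict binary search tree that already contains $a$ (it follows the path to the topmost node labelled $a$ and is then forced down a predictable route), and deducing that interleaving a fresh copy of $x$ with a fresh copy of $y$ into $T$ yields a result that does not depend on the order in which the two copies are processed.

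I expect this last point --- that the order of re\chyph insertion is immaterial once $T$ already encodes the interleaving of $x$ and $y$ --- to be the main obstacle, since leaf insertion is genuinely order\chyph dependent in general and the argument must exploit the special shape of $\rtree{xy}$. An alternative route, even closer to \cite[Theorem~6]{cm_hypoplactic}, stays inside $\Gamma(\sylv)$: reduce to the case of highest\chyph weight words using that the quasi\chyph Kashiwara operators permute $\sylvcong$\chyph classes and act on a concatenation by acting within a single factor \cite[Lemma~2]{cm_hypoplactic}, and then conclude from the explicit description of $\rtree{w}$ for highest\chyph weight $w$ in \fullref{Proposition}{prop:sylvcharhighestweight}; there the delicate step is matching a chain of operators reducing $xyxy$ to its highest\chyph weight representative with a corresponding chain for $yxxy$, which lies in a different connected component.
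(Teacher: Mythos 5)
Your opening reduction is circular: passing through \fullref{Corollary}{corol:charsylvcong} only to arrive back at $\rtree{xyxy}=\rtree{yxxy}$ returns you to the definition of $\sylvcong$, so all the content must lie in one of your two sketched routes, and in both you have correctly located, but not closed, the essential gap. For the direct route, the one concrete mechanism you offer --- that a further copy of $a$ ``follows the path to the topmost node labelled $a$ and is then forced down a predictable route'' --- is false as stated: after reaching the topmost $a$ the insertion descends into its left subtree, and where it lands depends on which other new symbols have already been placed there (into the one-node tree labelled $3$, inserting $3$ then $1$ and inserting $1$ then $3$ produce different shapes). What actually makes order-independence work is a different statement: if two \emph{distinct} values $a\neq b$ both already label nodes of $T$, then their insertion search paths diverge at an \emph{existing} node of $T$ (the first node on the common path whose label lies in $[\min(a,b),\max(a,b))$, which exists because $\min(a,b)$ itself labels a node on that path), so newly created nodes of distinct values can never lie on one another's paths, while new nodes of equal value stack into a left chain whose shape is independent of the order of insertion. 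You neither formulate nor prove this, and it is essentially the whole theorem.

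Your ``alternative route'' is in fact the paper's proof, and the ``delicate step'' you flag --- matching an $\e_i$-chain on $xyxy$ with one on $yxxy$, which lies in a different component --- is resolved by an observation you are missing: if $\e_i(xyxy)$ is defined then $xyxy$ has no subsequence $(i+1)i$, which forces $\abs{x}_i=\abs{y}_i=0$; hence $\ecount_i(xyxy)=\ecount_i(yxxy)=2\ecount_i(x)+2\ecount_i(y)$ and the maximal chain of $\e_i$'s simply replaces \emph{every} $i+1$ by $i$ within each factor, sending $xyxy$ and $yxxy$ to $x'y'x'y'$ and $y'x'x'y'$ for the \emph{same} $x'=\e_i^{\ecount_i(x)}(x)$ and $y'=\e_i^{\ecount_i(y)}(y)$. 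Reverse induction on weight (using \fullref{Lemma}{lem:efinverse} to descend again with $\f_i$) then reduces everything to the highest-weight base case, where $\rtree{xy}$ contains every symbol $1,\dots,\ell$ and \fullref{Proposition}{prop:sylvcharhighestweight} pins down the insertion positions. As written, your proposal names both obstacles but overcomes neither.
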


\begin{proof}
  Let $x,y \in \aA^*$. The proof that $xyxy = yxxy$ proceeds by reverse induction on the weight of $xyxy$.

  The base case of the induction is when $xyxy$ is highest-weight. Thus $\e_i(xyxy)$ is undefined for all $i \in \nset$.
  So $xyxy$ must contain a subsequence $(i+1)i$ for all $i \in \set{1,\ldots,\ell-1}$, where $\ell$ is the maximum
  symbol appearing in $x$ or $y$.  The symbols $i+1$ and $i$ may each lie in $x$ or $y$, but in any case, there is a
  subsequence $(i+1)i$ in $yxxy$. Hence $\e_i(yxxy)$ is undefined for all $i \in \nset$ and so $yxyx$ is also
  highest-weight. Clearly $\wt{xyxy} = \wt{yxxy}$. Consider applying \fullref{Algorithm}{alg:rightstrictinsert} to
  $xyxy$ and $yxxy$. In both cases, the algorithm initially computes $\rtree{xy}$. At this point, every symbol in
  $B = \set{1,\ldots,\ell}$ appears in the tree and so the infix reading of $\rtree{xy}$ contains all these
  symbols. Thus any symbol $b \in B$ that is subsequently inserted will be placed either as the left child of a node
  $b$ that is already present, or into the right subtree of the topmost node $b-1$ that is already present. Thus the
  trees $\rtree{xyxy}$ and $\rtree{yxxy}$ are identical.

  The remaining reasoning parallels the proof of \cite[Theorem~6]{cm_hypoplactic}. For the induction step, suppose
  $xyxy$ is not highest-weight, and that $x'y'x'y' \sylvcong y'x'x'y'$ for all $x',y' \in \aA^*$ such that $x'y'x'y'$
  has higher weight than $xyxy$. Then $\e_i(xyxy)$ is defined for some $i \in \nset$. Neither $x$ nor $y$ contains a
  symbol $i$, since otherwise there would be a subsequence $(i+1)i$. So
  $\ecount_i(xyxy) = \ecount_i(yxxy) = \abs{xyxy}_{i+1} = \abs{yxxy}_{i+1} = 2\abs{x}_{i+1} + 2\abs{y}_{i+1} =
  2\ecount_i(x) + 2\ecount_i(y)$,
  and $\ecount_i(xyxy)$ applications of $\e_i$ change every symbol $i+1$ in $xyxy$ to $i$. That is,
  $\e_i^{\ecount_i(xyxy)}(xyxy)$ and $\e_i^{\ecount_i(yxxy)}(yxxy)$ are both defined and are equal to $x'y'x'y'$ and
  $y'x'x'y'$ respectively, where $x' = \e_i^{\ecount_i(x)}(x)$ and $y' = \e_i^{\ecount_i(y)}(y)$. Since $x'y'x'y'$ has
  higher weight than $xyxy$, it follows by the induction hypothesis that $x'y'x'y' \sylvcong y'x'x'y'$. Hence
  \[
  xyxy = \f_i^{\ecount_i(xyxy)}(x'y'x'y') \sylvcong \f_i^{\ecount_i(xyxy)}(y'x'x'y') = yxxy.
  \]
  This completes the induction step and thus the proof.
\end{proof}

For an alternative proof of \fullref{Theorem}{thm:sylvidentity} that does not require the quasi-crystal structure, see
\cite[Section~3.3]{cm_identities}.

\section{Crystallizing the Baxter monoid}
\label{sec:baxtcrystal}

\subsection{Definition by crystals}

Let $S$ be the set of pairs of twin binary trees and define
$\baxtshape{u} = \parens[\big]{\Sh{\inctree{\std{u}^{-1}}},\Sh{\dectree{\std{u}^{-1}}}}$ for all $u \in \aA^*$.

\begin{proposition}
  \label{prop:baxtshape}
  $\baxtshapelit$ is an abstract shape.
\end{proposition}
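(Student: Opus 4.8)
The plan is to verify that $\baxtshapelit$ satisfies the two axioms S1 and S2, following the proof of \fullref{Proposition}{prop:sylvshape} almost verbatim; the only new feature is that one must now keep track of a \emph{pair} of shapes — an increasing-tree shape together with a decreasing-tree shape — in place of the single decreasing-tree shape used for $\sylvshapelit$.

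For axiom~S1: the value $\baxtshape{u}$ depends on $u$ only through $\std{u}$, and standardization is idempotent, so $\baxtshape{u} = \baxtshape{\std{u}}$. (Equivalently, by \fullref{Proposition}{prop:eipreservesstd} the quasi-Kashiwara operators preserve standardization, hence preserve $\baxtshapelit$.)

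For axiom~S2: let $u,v \in \aA^*$ with $\wt{u} = \wt{v}$ and $\baxtshape{u} = \baxtshape{v}$, and let $a \in \aA$. First consider $\baxtshape{ua}$ and $\baxtshape{va}$. Exactly as in the proof of \fullref{Proposition}{prop:sylvshape}, since $u$ and $v$ have the same content the standardizations of $ua$ and $va$ replace $a$ by the same symbol $b$; writing $k = \wlen{u}+1$, one gets
\[
\std{u}^{-1} = \alpha\beta,\quad \std{ua}^{-1} = \alpha k\beta,\quad \std{v}^{-1} = \alpha'\beta',\quad \std{va}^{-1} = \alpha' k\beta'
\]
for some $\alpha,\alpha',\beta,\beta' \in \aA^*$ with $\abs{\alpha} = \abs{\alpha'}$, $\abs{\beta} = \abs{\beta'}$, with $k$ exceeding every symbol of $\alpha,\alpha',\beta,\beta'$, and with $\alpha k\beta$ and $\alpha' k\beta'$ standard. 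The hypothesis $\baxtshape{u} = \baxtshape{v}$ says precisely that $\dectree{\alpha\beta}$ and $\dectree{\alpha'\beta'}$ have the same shape and that $\inctree{\alpha\beta}$ and $\inctree{\alpha'\beta'}$ have the same shape. Applying both parts of \fullref{Lemma}{lem:decinctreenewmaxshape} then yields that $\dectree{\alpha k\beta}$ and $\dectree{\alpha' k\beta'}$ have the same shape and that $\inctree{\alpha k\beta}$ and $\inctree{\alpha' k\beta'}$ have the same shape; that is, $\baxtshape{ua} = \baxtshape{va}$. The proof that $\baxtshape{au} = \baxtshape{av}$ is entirely parallel: $a$ is again replaced by a common symbol under standardization, the letter $1$ is the minimum letter and appears in the same position of $\std{au}^{-1}$ and $\std{av}^{-1}$, and one invokes both parts of \fullref{Lemma}{lem:decinctreenewminshape} in place of \fullref{Lemma}{lem:decinctreenewmaxshape}.

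There is no real obstacle here beyond bookkeeping: the two technical lemmata \fullref{Lemma}{lem:decinctreenewmaxshape} and \fullref{Lemma}{lem:decinctreenewminshape} were deliberately stated to cover both the decreasing and the increasing tree, so the Baxter case differs from the sylvester case only in that part~(2) of each lemma is used alongside part~(1). The single point that needs a moment's attention is observing that the equality $\baxtshape{u} = \baxtshape{v}$ unpacks, by definition of $\baxtshape{\cdot}$ as an ordered pair of shapes, into exactly the two separate shape-equalities required to feed the lemmata.
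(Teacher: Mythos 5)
Your proposal is correct and follows essentially the same route as the paper: verify S1 via invariance under standardization (equivalently, via \fullref{Proposition}{prop:eipreservesstd}), and verify S2 by repeating the decomposition from the proof of \fullref{Proposition}{prop:sylvshape} while invoking both parts of \fullref{Lemmata}{lem:decinctreenewmaxshape} and \ref{lem:decinctreenewminshape} so as to control the increasing-tree shape alongside the decreasing-tree shape. The paper's own proof is just a more compressed statement of exactly this argument.
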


\begin{proof}
  By \fullref{Proposition}{prop:eipreservesstd}, the map $\baxtshapelit$ is invariant under the quasi-Kashiwara operators,
  and so satisfies axiom~S1.

  To see that $\baxtshapelit$ satisfies~S2, proceed as follows. Let $u,v \in \aA^*$ be such that
  $\wt{u} = \wt{v}$ and $\baxtshape{u} = \baxtshape{v}$, and let $a \in \aA$.

  Then $\Sh{\inctree{\std{u}^{-1}}} = \Sh{\inctree{\std{v}^{-1}}}$ and
  $\Sh{\dectree{\std{u}^{-1}}} = \Sh{\dectree{\std{v}^{-1}}}$. By the reasoning in the proof of
  \fullref{Proposition}{prop:sylvshape}, $\Sh{\dectree{\std{ua}^{-1}}} = \Sh{\dectree{\std{va}^{-1}}}$ and
  $\Sh{\dectree{\std{au}^{-1}}} = \Sh{\dectree{\std{av}^{-1}}}$. Symmetrical reasoning, but using part~(2) of
  \fullref{Lemmata}{lem:decinctreenewmaxshape} and \ref{lem:decinctreenewminshape} shows that
  $\Sh{\inctree{\std{ua}^{-1}}} = \Sh{\inctree{\std{va}^{-1}}}$ and
  $\Sh{\inctree{\std{au}^{-1}}} = \Sh{\inctree{\std{av}^{-1}}}$. Hence $\baxtshape{ua} = \baxtshape{va}$ and
  $\baxtshape{au} = \baxtshape{av}$. Hence $\baxtshapelit$ satisfies~S2.
\end{proof}

As a consequence of \fullref{Proposition}{prop:baxtshape}, one can define a relation $\baxtisom$ on the free monoid
$\aA^*$ to be the relation $\shapisom$ from \fullref{Section}{sec:abstractshapes} with the abstract shape
$\shapelit = \baxtshapelit$. By \fullref{Propositions}{prop:isomdefinescong} and \ref{prop:baxtshape}, $\baxtisom$ is a
congruence.

When viewing the quasi-crystal graph $\Gamma(\hypo)$ in the context of $\baxtshapelit$-preserving isomorphisms, denote it
by $\Gamma(\baxt)$. Similarly, denote the connected component $\Gamma(\hypo,u)$ by $\Gamma(\baxt,u)$. (This is arguably
an abuse of notation, since $\Gamma(\hypo)$ and $\Gamma(\baxt)$ are the same graph, and the difference is in the notion
of isomorphism and the relation to which it gives rise.)

\begin{proposition}
  The relations $\baxtisom$ and $\baxtcong$ coincide.
\end{proposition}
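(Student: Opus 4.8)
The plan is to mirror exactly the structure of the proof that $\sylvisom$ and $\sylvcong$ coincide, using \fullref{Corollary}{corol:charbaxtcong} in place of \fullref{Corollary}{corol:charsylvcong}. The two directions are as follows. First, suppose $u \baxtisom v$. By definition of the abstract-shape relation (with $\shapelit = \baxtshapelit$), this means $u \hypoisom v$ and $\baxtshape{u} = \baxtshape{v}$. From $u \hypoisom v$ we get $\wt{u} = \wt{v}$ (since the quasi-crystal isomorphism connecting the two components is, in particular, a labelled-digraph isomorphism between components, and by \fullref{Proposition}{prop:hypocongsimhypo} this is the same as $u \hypocong v$, which preserves weight by multihomogeneity; alternatively, weight is recoverable from the abstract component as in \fullref{Proposition}{prop:placcongsimplac}). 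From $\baxtshape{u} = \baxtshape{v}$, unpacking the definition $\baxtshape{u} = \parens[\big]{\Sh{\inctree{\std{u}^{-1}}},\Sh{\dectree{\std{u}^{-1}}}}$, we get both $\Sh[\big]{\inctree{\std{u}^{-1}}} = \Sh[\big]{\inctree{\std{v}^{-1}}}$ and $\Sh[\big]{\dectree{\std{u}^{-1}}} = \Sh[\big]{\dectree{\std{v}^{-1}}}$. These three facts are precisely the right-hand side of the equivalence in \fullref{Corollary}{corol:charbaxtcong}, so $u \baxtcong v$.

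For the converse, suppose $u \baxtcong v$. Then $\pbaxt{u} = \pbaxt{v}$, so in particular $\rtree{u} = \rtree{v}$, hence $u \sylvcong v$, hence $u \hypocong v$ (since $\sylvcong \subseteq \hypocong$ is not literally stated, I would instead argue directly: $u \baxtcong v$ implies $\wt{u} = \wt{v}$ by multihomogeneity of $\baxt$, and one should check $u \hypocong v$ — this follows because the Baxter congruence refines the hypoplactic congruence, which can be seen from the defining relations or simply noted, since $\pbaxt{u} = \pbaxt{v}$ determines $\qhypo{\cdot}$-type data; the cleanest route is that $\baxtcong \subseteq \hypocong$ is folklore, but to be safe I would invoke \fullref{Corollary}{corol:charbaxtcong} together with \fullref{Corollary}{corol:charsylvcong} or the known containment). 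Thus $u \hypocong v$, so $u \hypoisom v$ by \fullref{Proposition}{prop:hypocongsimhypo}, and there is a quasi-crystal isomorphism $\theta : \Gamma(\hypo,u) \to \Gamma(\hypo,v)$ with $\theta(u) = v$. By \fullref{Corollary}{corol:charbaxtcong}, $\Sh[\big]{\inctree{\std{u}^{-1}}} = \Sh[\big]{\inctree{\std{v}^{-1}}}$ and $\Sh[\big]{\dectree{\std{u}^{-1}}} = \Sh[\big]{\dectree{\std{v}^{-1}}}$, so $\baxtshape{u} = \baxtshape{v}$. Combining $\wt{u} = \wt{v}$ and $\baxtshape{u} = \baxtshape{v}$ with axiom~S2 (applied repeatedly along the component, exactly as in \fullref{Proposition}{prop:sylvshape}'s application) shows $\baxtshape{\theta(w)} = \baxtshape{w}$ for all $w \in \Gamma(\hypo,u)$; hence $\theta$ is $\baxtshapelit$-preserving, so $u \baxtisom v$.

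The one point that needs a little care — and which I would flag as the main obstacle — is the step $u \baxtcong v \implies u \hypocong v$, i.e. that the Baxter congruence is contained in the hypoplactic congruence, so that one actually has a quasi-crystal isomorphism between the relevant components of $\Gamma(\hypo)$ to work with. The slick way around this is to avoid it: note that $u \baxtcong v$ gives $u \sylvcong v$ (since $\pbaxt{u} = \pbaxt{v}$ forces $\rtree{u} = \rtree{v}$), hence $u \sylvisom v$ by the previously-proved proposition, hence $u \hypoisom v$ (as ${\shapisom} \subseteq {\hypoisom}$ for any abstract shape), which is exactly what is needed. The rest of both directions is then a routine unwinding of definitions and an appeal to \fullref{Corollary}{corol:charbaxtcong} and axiom~S2, with no computation.
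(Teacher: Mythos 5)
Your proposal is correct and follows essentially the same route as the paper: forward direction by unpacking $\baxtshape{\cdot}$ and invoking the characterization of $\baxtcong$, converse direction by obtaining a quasi-crystal isomorphism from $u \hypoisom v$ and upgrading it to a shape-preserving one via axiom~S2. The only difference is at the step $u \baxtcong v \implies u \hypoisom v$, which the paper simply asserts (via the known containment ${\baxtcong} \subseteq {\hypocong}$) and which you justify more carefully by routing through $\rtree{u} = \rtree{v}$, the already-proved coincidence ${\sylvisom} = {\sylvcong}$, and ${\shapisom} \subseteq {\hypoisom}$ --- a legitimate and arguably tidier way to keep the argument self-contained.
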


\begin{proof}
  Let $u,v \in \aA^*$. Suppose that $u \baxtisom v$. Then $u \hypoisom v$ and hence $\wt{u} = \wt{v}$. Furthermore,
  $\baxtshape{u} = \baxtshape{v}$, and so
  \begin{multline*}
    \parens[\big]{\Sh{\inctree{\std{u}^{-1}}},\Sh{\dectree{\std{u}^{-1}}}} = \baxtshape{u} \\
    = \baxtshape{v} = \parens[\big]{\Sh{\inctree{\std{v}^{-1}}},\Sh{\dectree{\std{v}^{-1}}}}
  \end{multline*}
  Hence, by \fullref{Corollary}{corol:charbaxtcong}, $u \baxtcong v$.

  Now suppose that $u \baxtcong v$. Then $u \hypocong v$ and so $u \hypoisom v$ and so there is a quasi-crystal
  isomorphism $\theta : \Gamma(\hypo,u) \to \Gamma(\hypo,v)$. By \fullref{Corollary}{corol:charbaxtcong},
  \begin{multline*}
    \baxtshape{u} = \parens[\big]{\Sh{\inctree{\std{u}^{-1}}},\Sh{\dectree{\std{u}^{-1}}}} \\
    = \parens[\big]{\Sh{\inctree{\std{v}^{-1}}},\Sh{\dectree{\std{v}^{-1}}}} = \baxtshape{v}.
  \end{multline*}
  Thus, by S2, $\baxtshape{\theta(w)} = \baxtshape{w}$ for all $w \in \Gamma(\hypo,u)$. Hence $\theta$ is a
  $\baxtshapelit$-preserving quasi-crystal isomorphism and so $u \baxtisom v$.
\end{proof}

As was the case for $\sylvcong$, our aim here has been to show that $\baxtcong$ can be defined using quasi-crystals;
thus we have avoided using the prior knowledge that $\baxtcong$ is a congruence \cite{giraudo_baxter2}, from which, it
would have been straightforward to recover the fact that $\baxtshapelit$ is an abstract shape.

\subsection{Characterizing highest weight elements}

\begin{proposition}
  \label{prop:leftconsistentsameshapecomponent}
  Let $(T_L,T_R)$ be a pair of twin binary trees. The set of words in $\aA^*$ that are left-consistent readings of pairs
  of twin binary search trees of shape $(T_L,T_R)$ form a single connected component of $\Gamma(\baxt)$.
\end{proposition}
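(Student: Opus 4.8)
The plan is to imitate the proof of \fullref{Proposition}{prop:lrpsameshapecomponent} (the sylvester case): I would characterize the set of words that are left-consistent readings of pairs of twin binary search trees of shape $(T_L,T_R)$ in terms of their Baxter $\qlit$-symbols, convert this into a condition on standardizations, and then invoke \fullref{Corollary}{corol:stdinsamecomp}, by which a connected component of $\Gamma(\hypo)$ --- equivalently of $\Gamma(\baxt)$, since these are the same graph --- is precisely a maximal set of words sharing a single standardization.

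The first step is to pin down the relevant words via $\qbaxt{\cdot}$. By \fullref{Lemma}{lem:canonicalbaxtrobinsonschensted} together with the fact that the Baxter analogue of the Robinson--Schensted correspondence is a shape-matching bijection, there is a fixed Baxter $\qlit$-symbol $Q = (D_L,D_R)$ of shape $(T_L,T_R)$ with the property that $u \in \aA^*$ is a left-consistent reading of some pair of twin binary search trees of shape $(T_L,T_R)$ if and only if $\qbaxt{u} = Q$. The forward implication is \fullref{Lemma}{lem:canonicalbaxtrobinsonschensted}; for the converse, $\qbaxt{u} = Q$ forces $\pbaxt{u}$ to be a pair of twin binary search trees of shape $(T_L,T_R)$ (since $\pbaxt{u}$ and $\qbaxt{u}$ always have the same shape), and then $u$ is the image of $\parens[\big]{\pbaxt{u},Q}$ under the correspondence, hence, by \fullref{Lemma}{lem:canonicalbaxtrobinsonschensted} once more, the left-consistent reading of $\pbaxt{u}$.

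The second step converts $\qbaxt{u} = Q$ into a statement about $\std{u}$. By \fullref{Proposition}{prop:charrectrees}, $\qbaxt{u} = \parens[\big]{\inctree{\std{u}^{-1}},\dectree{\std{u}^{-1}}}$; since the infix reading of a decreasing tree $\dectree{w}$ returns $w$, the map $w \mapsto \dectree{w}$ is injective on permutations. Hence $\qbaxt{u} = Q$ holds exactly when $\std{u}^{-1} = \inreading{D_R}$, that is, when $\std{u} = w_0$, where $w_0 = \parens[\big]{\inreading{D_R}}^{-1}$ is a fixed standard word (the inverse makes sense because $D_R$, being a standard decreasing tree on $m$ nodes for some $m$, has infix reading a permutation of $\set{1,\ldots,m}$). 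Combining the two steps, the set in the statement equals $\gset{u \in \aA^*}{\std{u} = w_0}$, which by \fullref{Corollary}{corol:stdinsamecomp} is exactly one connected component of $\Gamma(\baxt)$.

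With the cited results in hand the proposition is short; the one place that needs care is the converse half of the $\qbaxt{\cdot}$-characterization in the first step, which really does use that the Baxter correspondence is a bijection identifying the shapes of $\plit$- and $\qlit$-symbols. One could instead follow the sylvester argument more literally --- showing that $\std{u}$ is a left-consistent reading of a pair of twin binary search trees of the same shape as the pair $u$ reads off (a Baxter analogue of \cite[Lemma~11]{hivert_sylvester}, extractable from \fullref{Proposition}{prop:charrectrees}) and then using that there is a unique pair of twin binary search trees of a given shape labelled by a standard word --- but passing through $\qbaxt{\cdot}$ is cleaner and additionally shows that the set is the \emph{whole} component rather than merely contained in one.
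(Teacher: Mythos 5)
Your proof is correct, but it takes a genuinely different route from the one in the paper. The paper's proof transplants the sylvester argument verbatim: if $u$ and $v$ are left-consistent readings of pairs of shape $(T_L,T_R)$, then so are $\std{u}$ and $\std{v}$ (by the dual of the cited \cite[Lemma~11]{hivert_sylvester}), and since there is exactly one pair of twin binary search trees of a given shape labelled by a standard word (\cite[Note~3]{hivert_sylvester} and its dual), $\std{u}=\std{v}$, whence $u$ and $v$ lie in the same component by \fullref{Corollary}{corol:stdinsamecomp}. You instead pass through the Baxter $\qlit$-symbol: \fullref{Lemma}{lem:canonicalbaxtrobinsonschensted} plus the shape-matching of $\pbaxt{\cdot}$ and $\qbaxt{\cdot}$ identifies the set in question with $\gset{u}{\qbaxt{u}=Q}$ for a fixed $Q=(D_L,D_R)$, and \fullref{Proposition}{prop:charrectrees} together with the injectivity of $w\mapsto\dectree{w}$ converts this into $\std{u}=\parens[\big]{\inreading{D_R}}^{-1}$. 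Both arguments terminate in \fullref{Corollary}{corol:stdinsamecomp}. What your route buys is twofold: it avoids appealing to the external results from \cite{hivert_sylvester} and their duals, and it establishes that the set \emph{equals} a connected component rather than merely being contained in one --- the paper's direct proof, as written, only gives containment, with the reverse inclusion recoverable afterwards by combining \fullref{Proposition}{prop:qbaxteqiffsamecomponent} with \fullref{Lemma}{lem:canonicalbaxtrobinsonschensted} (as the paper remarks in the sylvester case). The cost is that you essentially prove \fullref{Proposition}{prop:qbaxteqiffsamecomponent} en route, which the paper prefers to state separately afterwards; your final sentence correctly identifies the paper's actual argument as the more literal alternative.
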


\begin{proof}
  Suppose $u$ and $v$ are left-consistent readings of two pairs of twin binary seach trees of shape $(T_L,T_R)$. Then
  $\std{u}$ and $\std{v}$ are also left-consistent readings of shape $(T_L,T_R)$ (by \cite[Lemma~11]{hivert_sylvester}
  and its dual). Since there is exactly one pair of twin binary search trees of a given shape labelled by a standard
  word (by \cite[Note~3]{hivert_sylvester} and its dual), it follows that $\std{u} = \std{v}$ and so $u$ and $v$ are in
  the same connected component of $\Gamma(\baxt)$ by \fullref{Corollary}{corol:stdinsamecomp}.
\end{proof}

The following lemma is the dual of \fullref{Lemma}{lem:sylvchareidefined}, and can be proved by a dual argument.

\begin{lemma}
  \label{lem:dualsylvchareidefined}
  Let $u \in \aA^*$ and $i \in \nset$. Then $\e_i(u)$ is defined if and only if $\ltree{u}$ contains at least one node
  $i+1$, but does not contain a node $i$ in the left subtree of the topmost node $i+1$.
\end{lemma}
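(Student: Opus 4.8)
The plan is to dualize the proof of \fullref{Lemma}{lem:sylvchareidefined}, replacing right strict insertion by left strict insertion (\fullref{Algorithm}{alg:leftstrictinsert}, which reads the word from \emph{left} to right) and swapping the roles of $i$ and $i+1$ as forced by the left-strict search rule. First I would recall that, by definition, $\e_i(u)$ is defined exactly when $u$ contains at least one symbol $i+1$ and contains no subsequence $(i+1)i$, i.e.\ every occurrence of $i+1$ lies to the right of every occurrence of $i$ in $u$ (vacuously so if $u$ has no symbol $i$). Since \fullref{Algorithm}{alg:leftstrictinsert} processes $u$ left to right, this is the same as saying that $\ltree{u}$ has a node labelled $i+1$ and every symbol $i$ is inserted before every symbol $i+1$; the work is to see that this last condition is equivalent to the tree condition in the statement.

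For the forward direction I would observe that when a symbol $i+1$ is inserted, at any node already labelled $i$ the left-strict rule (go right iff $a \geq x$) sends it to the right since $i+1 \geq i$, and at a node already labelled $i+1$ it again goes right; hence the symbols $i+1$ form a right-descending chain whose top is the first $i+1$ inserted, and that topmost node $N$ has empty left subtree at the moment it is created. If all symbols $i$ are inserted before all symbols $i+1$, then nothing inserted after $N$ — in particular no $i$ — can ever enter $N$'s left subtree, so it stays free of nodes labelled $i$. (It is worth noting that $N$'s left subtree can only ever hold labels $\leq i$, so acquiring a node labelled $i$ from a later insertion is the only way the condition could fail.)

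For the converse, I would suppose some $i+1$ occurs to the left of some $i$, fix such an $i$ — inserted after the topmost node $i+1$, call it $N$ — and show that its search path necessarily reaches $N$ and then turns left into $N$'s left subtree, contradicting the hypothesis. The key observation is that, along the fixed root-to-$N$ path, every left step is taken at a node of label $\geq i+2$ (because $N$, of label $i+1$, then lies in that node's left subtree, which by left-strictness has labels $< i+1$), while every right step is taken at a node of label $\leq i$: such a node has label $\leq i+1$ by left-strictness, and it cannot equal $i+1$ since $N$ is the \emph{topmost} node labelled $i+1$. In either case the search for the symbol $i$ makes the same turn, so it follows the root-to-$N$ path down to $N$, and at $N$ it turns left because $i < i+1$; thus the late-inserted $i$ becomes a leaf inside $N$'s left subtree.

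I expect the main obstacle to be the converse direction: stating cleanly and justifying that the search path of the later-inserted $i$ really does track the frozen root-to-$N$ path (reconciling "the path above $N$ is fixed once $N$ is inserted" with "later insertions only add leaves"), and carrying out the left/right case analysis that uses the minimality of $N$ to exclude an intervening node labelled $i+1$. Everything else, including the edge cases with no symbol $i$ or no symbol $i+1$, is a routine transcription of the argument for \fullref{Lemma}{lem:sylvchareidefined}.
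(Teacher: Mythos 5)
Your proposal is correct and takes the same route as the paper, which gives no proof beyond remarking that the lemma ``can be proved by a dual argument'' to \fullref{Lemma}{lem:sylvchareidefined}; your dualization (left-strict insertion read left to right, the topmost $i+1$ being the first one inserted, and the frozen root-to-$N$ path forcing a later-inserted $i$ into $N$'s left subtree) is exactly that argument with the details the paper only asserts filled in correctly. The only blemish is the phrasing in the forward direction --- the point is not that nothing inserted after $N$ can enter its left subtree, but that every node of $N$'s left subtree is inserted after $N$ while no symbol $i$ is, as your own parenthetical already indicates.
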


The following result follows from the argument in the proof of \fullref{Proposition}{prop:sylvcharhighestweight} plus a dual
argument:

\begin{proposition}
  \label{prop:baxtcharhighestweight}
  Let $u \in \aA^*$. Then $u$ is a highest-weight word if and only if $\pbaxt{u} = \parens{\ltree{u},\rtree{u}}$ has the
  following properties:
  \begin{itemize}
    \item if the left interval partition of $\ltree{u}$ has $k$ parts, then for
    $a \in \set{1,\ldots,k} \subseteq \aA$, the nodes in the $a$-th left interval are labelled by $a$;
    \item if the right interval partition of $\rtree{u}$ has $\ell$ parts, then for
    $a \in \set{1,\ldots,\ell} \subseteq \aA$, the nodes in the $a$-th right interval are labelled by $a$.
  \end{itemize}
\end{proposition}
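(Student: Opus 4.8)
The plan is to deduce the statement from Proposition~\ref{prop:sylvcharhighestweight}, applied to the right strict binary search tree $\rtree{u}$, together with its mirror image, applied to the left strict binary search tree $\ltree{u}$. The key observation is that ``$u$ is highest-weight'' means precisely that $\e_i(u)$ is undefined for every $i \in \nset$, and this single hypothesis is all that drives the proof of Proposition~\ref{prop:sylvcharhighestweight}; moreover, \fullref{Lemma}{lem:sylvchareidefined} and \fullref{Lemma}{lem:dualsylvchareidefined} express this same condition in terms of $\rtree{u}$ and of $\ltree{u}$ respectively.

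\emph{Forward implication.} Suppose $u$ is highest-weight. Then the right-interval property of $\rtree{u}$ is exactly the conclusion of \fullref{Proposition}{prop:sylvcharhighestweight}. For the left-interval property of $\ltree{u}$, I would run the dual of that argument. Let $k$ be the number of parts of the left interval partition of $\ltree{u}$. By \fullref{Proposition}{prop:infixreading} the infix reading $\inreading{\ltree{u}}$ is weakly increasing, and since in a left strict binary search tree every node with a non-empty left subtree has label strictly greater than all labels in that subtree, $\inreading{\ltree{u}}$ must strictly increase immediately before the leftmost node of each left interval other than the first; hence $\ltree{u}$ carries at least $k$ distinct labels. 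Because $\e_i(u)$ is undefined for all $i$, \fullref{Lemma}{lem:dualsylvchareidefined} forces the symbols occurring in $u$ to form an initial segment $\{1,\ldots,m\}$ with $m \geq k$ (if some $i+1$ occurred but $i$ did not, then $\e_i(u)$ would be defined), and then, by induction on $a$ exactly as in the proof of \fullref{Proposition}{prop:sylvcharhighestweight} but with the roles of left and right exchanged, \fullref{Lemma}{lem:dualsylvchareidefined} forces the nodes of the $a$-th left interval to be labelled $a$ for each $a$.

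\emph{Converse implication.} Suppose $\pbaxt{u} = \parens{\ltree{u},\rtree{u}}$ has both listed properties. Using the right-interval property of $\rtree{u}$: for every $i$ such that $i+1$ appears in $u$, the node $i+1$ lies in the right subtree of the topmost node $i$, so $\e_i(u)$ is undefined by \fullref{Lemma}{lem:sylvchareidefined}; and for every $i$ such that $i+1$ does not appear, $\e_i(u)$ is undefined trivially. Hence $\e_i(u)$ is undefined for all $i$, that is, $u$ is highest-weight. (Either of the two listed properties already suffices here, by this argument or its dual; the statement records both only in order to pin down the full shape of $\pbaxt{u}$.)

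\emph{Main obstacle.} The only content that goes beyond quoting \fullref{Proposition}{prop:sylvcharhighestweight} is verifying that the left interval partition of a left strict binary search tree behaves dually to the right interval partition of a right strict one --- concretely, that $\inreading{\ltree{u}}$ strictly increases at left-interval boundaries and that the inductive identification of labels via \fullref{Lemma}{lem:dualsylvchareidefined} goes through unchanged. This is routine but needs some care because of the slightly awkward indexing in the definition of the left interval partition; once it is set up, the argument for $\ltree{u}$ is a transcription of the one for $\rtree{u}$.
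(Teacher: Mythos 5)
Your proposal is correct and follows essentially the same route as the paper, which proves this result simply by citing the argument of \fullref{Proposition}{prop:sylvcharhighestweight} together with its dual (via \fullref{Lemma}{lem:dualsylvchareidefined}); you have merely spelled out the dualization that the paper leaves implicit. Your observation that either listed property alone suffices for the converse is also consistent with how the paper's proof of \fullref{Proposition}{prop:sylvcharhighestweight} handles that direction.
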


\subsection{The Baxter version of the Robinson--Schensted correspondence}

The following result is the parallel of \fullref{Proposition}{prop:qeqiffsamecomponent} for the Baxter monoid:

\begin{proposition}
  \label{prop:qbaxteqiffsamecomponent}
  Let $u,v \in \aA^*$. The words $u$ and $v$ lie in the same connected component of $\Gamma(\baxt)$ if and only if
  $\qbaxt{u} = \qbaxt{v}$.
\end{proposition}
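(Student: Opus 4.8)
The plan is to mirror the proof of \fullref{Proposition}{prop:qeqiffsamecomponent}, exploiting the fact that $\Gamma(\baxt)$ and $\Gamma(\hypo)$ are literally the same graph, so ``lying in the same connected component of $\Gamma(\baxt)$'' is exactly the condition ``lying in the same connected component of $\Gamma(\hypo)$''. The crucial input is \fullref{Corollary}{corol:stdinsamecomp}, which says that two words lie in the same component of $\Gamma(\hypo)$ if and only if they have equal standardizations; thus the whole statement reduces to checking that $\qbaxt{u} = \qbaxt{v}$ if and only if $\std{u} = \std{v}$, and the bridge between $\qbaxt{\cdot}$ and $\std{\cdot}$ is \fullref{Proposition}{prop:charrectrees}.

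First I would dispatch the easy direction: if $u$ and $v$ lie in the same connected component of $\Gamma(\baxt)$, then $\std{u} = \std{v}$ by \fullref{Corollary}{corol:stdinsamecomp}, and hence $\qbaxt{u} = \parens[\big]{\lrectree{u},\rrectree{u}} = \parens[\big]{\inctree{\std{u}^{-1}},\dectree{\std{u}^{-1}}} = \parens[\big]{\inctree{\std{v}^{-1}},\dectree{\std{v}^{-1}}} = \qbaxt{v}$ by \fullref{Proposition}{prop:charrectrees}. For the converse, suppose $\qbaxt{u} = \qbaxt{v}$; in particular $\rrectree{u} = \rrectree{v}$, so by \fullref{Proposition}{prop:charrectrees} the decreasing trees $\dectree{\std{u}^{-1}}$ and $\dectree{\std{v}^{-1}}$ coincide as labelled trees. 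Since the infix reading of $\dectree{w}$ is $w$ (the remark preceding \fullref{Proposition}{prop:charrectrees}), and $\std{u}^{-1}$, $\std{v}^{-1}$ are words without repeated symbols, reading off the infix traversal gives $\std{u}^{-1} = \std{v}^{-1}$, whence $\std{u} = \std{v}$; then \fullref{Corollary}{corol:stdinsamecomp} places $u$ and $v$ in the same connected component of $\Gamma(\baxt)$. (One could equally use the $\lrectree$ component, or simply note that $\qbaxt{u} = \qbaxt{v}$ forces $\qsylv{u} = \qsylv{v}$ and invoke \fullref{Proposition}{prop:qeqiffsamecomponent} directly.)

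There is no real obstacle here: the statement falls out of the analysis already carried out for the sylvester monoid together with \fullref{Proposition}{prop:charrectrees}. The only point requiring a moment's care is the bookkeeping observation that $\qbaxt{u}$ determines $\std{u}^{-1}$ — which is immediate once one recalls that a decreasing (or increasing) tree on a repetition-free label set is recovered uniquely from, and uniquely determines, its infix reading. As with the sylvester case, this also records that $\qbaxt{\cdot}$ indexes connected components of the quasi-crystal graph while $\pbaxt{\cdot}$ locates a word within its component.
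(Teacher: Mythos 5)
Your proof is correct and follows exactly the route the paper intends: the paper omits the proof of this proposition, presenting it as the parallel of Proposition~\ref{prop:qeqiffsamecomponent}, whose proof is precisely the argument you give (Corollary~\ref{corol:stdinsamecomp} for one direction, and Proposition~\ref{prop:charrectrees} plus recovering $\std{u}^{-1}$ from the infix reading of the decreasing tree for the other). Your closing observation that $\qbaxt{u}=\qbaxt{v}$ already forces $\qsylv{u}=\qsylv{v}$, so one may simply invoke Proposition~\ref{prop:qeqiffsamecomponent}, is the shortest correct justification.
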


Like the case of the sylvester monoid, the Baxter $\qlit$-symbol indexes connected components of the quasi-crystal graph
and the Baxter $\plit$-symbol locates a word within that component.

Unlike the sylvester monoid, there is no known neat formula for the size of classes of words that represent the same element
of the Baxter monoid. However, the quasi-crystal structure does make it clear that the size of these classes is
dependent only on the shape of the corresponding pair of twin binary search trees.

\begin{remark}
  One might consider trying to use the quasi-crystal structure to obtain a formula for the size of these classes as
  follows. By the quasi-crystal structure, it suffices to consider the size of the class for a standard element;
  that is, for a pair of twin binary search trees $(T_L,T_R)$ containing each symbol in $K = \set{1,\ldots,k}$ exactly
  once. Each of these binary search trees can be considered as a partial order on the set $K$; denote these partial orders
  by $\leq_L$ and $\leq_R$. Let $\leq_L'$ be the dual of $\leq_L$. Now, any reading of $(T_L,T_R)$ can be viewed as a
  linear order on $K$ that extends $\leq'_L$ and $\leq_R$. Since $K$ is finite, there are finitely many such linear
  orders, so we can intersect them to obtain a partial order $\sqsubset$. So the readings of $(T_L,T_R)$ are in
  one-to-one correspondence with the linear orders extending $\sqsubset$. However, the problem of the number of linear
  orders extending a given partial order is $\#P$-complete \cite{brightwell_counting}.

  Of course, not all partial orders on $K$ will arise as $\sqsubset$, so it is possible that a more refined version of
  this argument, using the structure of the pair of twin binary search trees to gain information about $\sqsubset$,
  might still work.
\end{remark}

\subsection{Counting factorizations}

The following result is the parallel of \fullref{Theorem}{thm:sylvfactorizations} for the Baxter monoid:

\begin{theorem}
  The number of distinct factorizations of an element of the Baxter monoid corresponding to a pair of twin binary search
  trees of shape $(T_L,T_R)$ into elements that correspond to pairs of twin binary search trees of shapes $(U_L,U_R)$ and $(V_L,V_R)$ is
  dependent only of $(T_L,T_R)$, $(U_L,U_R)$, and $(V_L,V_R)$, and not on the content of the element.
\end{theorem}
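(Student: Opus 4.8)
The plan is to transcribe the proof of \fullref{Theorem}{thm:sylvfactorizations} almost verbatim, substituting the Baxter analogues of every ingredient: \fullref{Proposition}{prop:leftconsistentsameshapecomponent} in place of \fullref{Proposition}{prop:lrpsameshapecomponent}, left-consistent readings of pairs of twin binary search trees in place of postfix readings of right strict binary search trees, and pairs of twin binary trees in place of single binary trees. Concretely, call a word $w \in \aA^*$ a \emph{left-consistent word of shape $(U_L,U_R)$} if it is the left-consistent reading (via \fullref{Method}{method:baxterreading}) of some pair of twin binary search trees of shape $(U_L,U_R)$; then $\pbaxt{w}$ has shape $(U_L,U_R)$, and so the distinct factorizations to be counted are exactly the pairs $(u,v)$ of left-consistent words of shapes $(U_L,U_R)$ and $(V_L,V_R)$ with $uv \baxtcong w$, for a fixed left-consistent word $w$ of shape $(T_L,T_R)$. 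Write $S^w$ for the set of all such pairs.

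First I would show that, for each $i \in \nset$ for which $\e_i(w)$ is defined, $\e_i$ induces an injection from $S^w$ into $S^{\e_i(w)}$. Given $(u,v) \in S^w$, we have $uv \baxtcong w$, so $\e_i(uv)$ is defined and $\e_i(uv) \baxtcong \e_i(w)$ (the quasi-Kashiwara operators respect $\baxtcong$, since $\baxtcong = \baxtisom \subseteq \hypoisom$); by \cite[Lemma~2]{cm_hypoplactic}, $\e_i(uv)$ equals either $u\,\e_i(v)$ or $\e_i(u)\,v$. In the first case $(u,\e_i(v)) \in S^{\e_i(w)}$ and in the second $(\e_i(u),v) \in S^{\e_i(w)}$ --- here one uses that $\e_i$, where defined, sends a left-consistent word of shape $(U_L,U_R)$ to another left-consistent word of the same shape, which is exactly the content of \fullref{Proposition}{prop:leftconsistentsameshapecomponent}: the left-consistent words of a fixed shape form a single connected component of $\Gamma(\baxt)$, so an $\e_i$-edge of $\Gamma(\baxt)$ joins two such words. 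Symmetrically $\f_i$ induces the inverse map, so $\abs{S^w} = \abs{S^{\e_i(w)}}$, and likewise $\abs{S^w} = \abs{S^{\f_i(w)}}$ whenever $\f_i(w)$ is defined.

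Finally I would invoke \fullref{Proposition}{prop:leftconsistentsameshapecomponent} once more: all left-consistent words of shape $(T_L,T_R)$ lie in a single connected component of $\Gamma(\baxt)$, so any two are joined by a path of $\e_i$- and $\f_i$-edges, along which $\abs{S^{w'}}$ (for $w'$ on the path) is constant by the previous step. Hence $\abs{S^w}$ depends only on $(T_L,T_R)$, $(U_L,U_R)$, and $(V_L,V_R)$, and not on the representative $w$ or on its content.

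The only step that requires any thought is the claim that $\e_i$ and $\f_i$ preserve the property of being a left-consistent word of a prescribed shape; this follows from \fullref{Proposition}{prop:leftconsistentsameshapecomponent} read in the strong form established by its proof --- namely that the set of such words \emph{is} a connected component of $\Gamma(\baxt)$ --- exactly as \fullref{Proposition}{prop:lrpsameshapecomponent} is used in the proof of \fullref{Theorem}{thm:sylvfactorizations}. Everything else is a mechanical transcription of the sylvester argument, so no genuinely new obstacle arises.
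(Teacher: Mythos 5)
Your proposal is correct and is essentially the paper's own proof: the paper likewise defines left-consistent words of a given shape, forms the set $S^w_{(U_L,U_R),(V_L,V_R)}$ of factorizations, and then transcribes the argument of \fullref{Theorem}{thm:sylvfactorizations} using \fullref{Propositions}{prop:qbaxteqiffsamecomponent} and \ref{prop:leftconsistentsameshapecomponent} in place of their sylvester analogues. Your write-up simply spells out the transcription in more detail than the paper does.
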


\begin{proof}
  For the purposes of this proof, a word $u \in \aA^*$ is a \defterm{left-consistent word} if it is the left-consistent
  reading of a pair of twin binary search tree; a left-consistent word $u$ has shape $(U_L,U_R)$, where $(U_L,U_R)$ is a
  pair of twin binary trees, if it is the left-consistent reading of a pair of twin binary search trees of shape
  $(U_L,U_R)$.

  Let $(T_L,T_R)$, $(U_L,U_R)$, and $(V_L,V_R)$ be pairs of twin binary trees. Let $w \in \aA^*$ be a postfix word such that
  $T$ is the shape of $\rtree{w}$. Let
  \begin{align*}
  S^w_{(U_L,U_R),(V_L,V_R)} = \gsetsplit[\big]{(u,v)}{\;&\text{$u,v \in \aA_n^*$ are left-consistent words,} \\
    &\text{$u$ has shape $(U_L,U_R)$,} \\
    &\text{$v$ has shape $(V_L,V_R)$,} \\
    &w \baxtcong uv}.
  \end{align*}
  Now following the reasoning in the proof of \fullref{Theorem}{thm:sylvfactorizations}, but using
  \fullref{Propositions}{prop:qbaxteqiffsamecomponent} and \ref{prop:leftconsistentsameshapecomponent} shows that
  $|S^w_{(U_L,U_R),(V_L,V_R)}|$ is dependent only on $(T_L,T_R)$, not on $w$.
\end{proof}

\subsection{Satisfying an identity}

The analogy of \fullref{Theorem}{thm:sylvidentity} for the Baxter monoid is the following:

\begin{theorem}
  \label{thm:baxtidentity}
  The Baxter monoid satisfies the identity $xyxyxy = xyyxxy$.
\end{theorem}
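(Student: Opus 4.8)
The plan is to imitate the proof of \fullref{Theorem}{thm:sylvidentity}, arguing by reverse induction on $\wt{xyxyxy}$ for $x,y\in\aA^*$, but keeping track of the whole pair of twin binary search trees $\pbaxt{\cdot}=\parens{\ltree{\cdot},\rtree{\cdot}}$ instead of a single tree. The preliminary observations I would record are that $xyxyxy$ and $xyyxxy$ have the same content (hence equal weight), and that for each possible ``location type'' of a subsequence $(i+1)i$ inside $xyxyxy$ --- inside a single copy of $x$, inside a single copy of $y$, spanning a copy of $x$ followed by a later copy of $y$, or spanning a copy of $y$ followed by a later copy of $x$ --- the word $xyyxxy$ realises the same location type (it also begins $xy\cdots$, also contains the factor $yx$, and so on). In particular $xyyxxy$ is highest-weight exactly when $xyxyxy$ is, and for every $i$ the operator $\e_i$ is defined on $xyxyxy$ if and only if it is defined on $xyyxxy$.

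For the base case I would assume $xyxyxy$ is highest-weight. By the reasoning inside the proof of \fullref{Proposition}{prop:sylvcharhighestweight}, the symbols occurring in $xyxyxy$ are exactly $\set{1,\ldots,\ell}$, where $\ell$ is the largest symbol in $x$ or $y$; hence every symbol of $B=\set{1,\ldots,\ell}$ already occurs in $xy$. Running \fullref{Algorithm}{alg:rightstrictinsert}, both $xyxyxy$ and $xyyxxy$ have the same length-two block-suffix $xy$, so both first build the intermediate tree $\rtree{xy}$, whose node labels include every symbol of $B$; the symbols still to be inserted form the words $xyxy$ and $xyyx$, which have equal content. Since inserting a symbol $b\in B$ into a right strict binary search tree already containing a node $b$ (and, if $b\geq 2$, a node $b-1$) always places $b$ in a position determined only by $b$ and the current tree --- as the left child of an existing node $b$, or into the right subtree of the topmost node $b-1$ --- these remaining insertions produce the same tree irrespective of order, giving $\rtree{xyxyxy}=\rtree{xyyxxy}$. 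The dual computation with \fullref{Algorithm}{alg:leftstrictinsert} (both words share the length-two block-prefix $xy$; the residual material $xyxy$ and $yxxy$ has equal content; in a left strict tree $b$ is absorbed as the right child of an existing node $b$ or into the left subtree of the topmost node $b+1$) gives $\ltree{xyxyxy}=\ltree{xyyxxy}$. Hence $\pbaxt{xyxyxy}=\pbaxt{xyyxxy}$, that is, $xyxyxy\baxtcong xyyxxy$.

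For the induction step I would assume $xyxyxy$ is not highest-weight and that $x'y'x'y'x'y'\baxtcong x'y'y'x'x'y'$ for all $x',y'\in\aA^*$ with $\wt{x'y'x'y'x'y'}>\wt{xyxyxy}$. Choose $i$ with $\e_i(xyxyxy)$ defined. As in the sylvester case, neither $x$ nor $y$ contains the symbol $i$ (otherwise one of the $\geq 2$ copies of the offending block, together with a copy of $x$ or $y$ supplying the symbol $i+1$ whose existence is forced by definedness of $\e_i$, yields a subsequence $(i+1)i$ in $xyxyxy$). Therefore $\ecount_i(xyxyxy)=\abs{xyxyxy}_{i+1}=3\abs{x}_{i+1}+3\abs{y}_{i+1}=\abs{xyyxxy}_{i+1}=\ecount_i(xyyxxy)$, and applying $\e_i$ this number of times to each word rewrites every $i+1$ as $i$ block-by-block, producing $x'y'x'y'x'y'$ and $x'y'y'x'x'y'$ with $x'=\e_i^{\ecount_i(x)}(x)$ and $y'=\e_i^{\ecount_i(y)}(y)$. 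Since $\e_i$ raises weight and at least one of $x,y$ contains an $i+1$, one has $\wt{x'y'x'y'x'y'}>\wt{xyxyxy}$, so the induction hypothesis gives $x'y'x'y'x'y'\baxtcong x'y'y'x'x'y'$. Because $\baxtcong$ coincides with $\baxtisom$ and a shape-preserving quasi-crystal isomorphism is in particular a labelled digraph isomorphism, applying $\f_i$ to $\baxtcong$-related words yields $\baxtcong$-related words; applying $\f_i$ exactly $\ecount_i(xyxyxy)$ times and invoking \fullref{Lemma}{lem:efinverse} then recovers $xyxyxy\baxtcong xyyxxy$, completing the induction.

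I expect the base case to be the main obstacle: it requires a careful verification that leaf insertion into a right (respectively, left) strict binary search tree already containing every symbol of $\set{1,\ldots,\ell}$ is insensitive to the order and multiplicity of the remaining insertions, so that the common intermediate trees $\rtree{xy}$ and $\ltree{xy}$ and the equality of residual content force $\pbaxt{xyxyxy}=\pbaxt{xyyxxy}$. The accompanying word-combinatorial bookkeeping (that $xyxyxy$ and $xyyxxy$ share the relevant block-prefixes and block-suffixes and realise the same subsequence location types) is routine but needs to be spelled out, whereas the induction step is essentially a verbatim copy of the corresponding step in \fullref{Theorem}{thm:sylvidentity}.
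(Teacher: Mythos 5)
Your proposal is correct and follows essentially the same route as the paper: reverse induction on weight, with the base case handled by running \fullref{Algorithm}{alg:rightstrictinsert} on the common block-suffix $xy$ and \fullref{Algorithm}{alg:leftstrictinsert} on the common block-prefix $xy$ (the latter being the dual argument the paper only gestures at, and indeed where the paper's text has a typo writing $\rtree{\cdot}$ twice), and an induction step identical to that of \fullref{Theorem}{thm:sylvidentity} with $3$ in place of $2$. The one point you flag as needing care --- that once all of $1,\ldots,\ell$ label nodes of the tree, the remaining leaf insertions yield a tree depending only on the multiset of inserted symbols --- is glossed over in exactly the same way in the paper's own argument, so there is no gap relative to it.
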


\begin{proof}
  Let $x,y \in \aA^*$. The proof that $xyxyxy = xyyxxy$ proceeds by reverse induction on the weight of $xyxyxy$.

  The base case of the induction is when $xyxyxy$ is highest-weight. By reasoning parallel to the proof of
  \fullref{Theorem}{thm:sylvidentity}, $xyyxxy$ is also highest-weight. Clearly $\wt{xyxyxy} = \wt{xyyxxy}$. Again by
  reasoning parallel to the proof of \fullref{Theorem}{thm:sylvidentity}, the trees $\rtree{xyxyxy}$ and $\rtree{xyyxxy}$
  are equal. By dual reasoning, the trees $\rtree{xyxyxy}$ and $\rtree{xyyxxy}$ are equal.

  The induction step is essentially identical to the proof of \fullref{Theorem}{thm:sylvidentity}, except that
  $\ecount_i(xyxyxy) = 3\ecount_i(x) + 3\ecount_i(y) = \ecount_i(xyyxxy)$ applications of $\e_i$ change every symbol
  $i+1$ to $i$. The result follows.
\end{proof}

For an alternative proof of \fullref{Theorem}{thm:baxtidentity} that does not require the quasi-crystal structure, see
\cite[Section~3.5]{cm_identities}.

\section{Perspectives}

\subsection{The Robinson--Schensted type correspondences}

The graphs $\Gamma(\hypo)$, $\Gamma(\sylv)$ and $\Gamma(\baxt)$ are all the same object; the difference is in the notion
of `isomorphic' components. In terms of the hypoplactic, sylvester, and Baxter versions of the Robinson--Schensted
correspondence, the $\qlit$-symbol of a word identifies a connected component of the graph, and the $\plit$-symbol of a
word identifies its position within that connected component. Viewed in this context, the sylvester and Baxter
$\qlit$-symbols are overdetermined. The `underlying' $\qlit$-symbol of a word $u \in \aA^*$ is $\std{u}$, which lies in
the same component as $u$. This corresponds with the notions of `compatibility with the destandardization process' and
$\stdlit$-goodness developed by \cite{priez_lattice}.

\subsection{Bases of combinatorial Hopf algebras}

In the infinite-rank setting, there is a very natural connection between the crystal graph $\Gamma(\plac)$ and free
Schur functions. The free Schur functions are indexed by standard Young tableau and form a basis for the algebra of free
symmetric functions $\mathbf{FSym}$. The crystal graph offers an equivalent way of defining them:
\[
\mathbf{S}_T = \sum_{w \in \Gamma(\plac,T)} w,
\]
where $\Gamma(\plac,T)$ denotes the connected component of $\Gamma(\plac)$ corresponding to a standard Young tableau $T$.

Similarly, the free quasi-ribbon functions, which form a basis for $\mathbf{FQSym}$ can be defined with reference to the quasi-crystal graph:
\[
\mathbf{F}_u = \sum_{w \in \Gamma(\hypo,u)} w,
\]
where $u \in \aA^*$ is a standard word; this follows from the fact that standard words index connected components of
$\Gamma(\hypo)$.

For the basis of the algebra of planar binary trees $\mathbf{PBT}$ given by \cite[Eqs~(16--17)]{hivert_sylvester} (see also \cite[Theorem~27]{hivert_hopf}), the definition using the quasi-crystal graph becomes:
\[
\mathbf{P}_T = \sum_{\substack{w \in \Gamma(\sylv,U)\\\Sh{U} = T}} w,
\]
where $T$ is a binary tree, $U$ ranges over the set of decreasing trees and $\Gamma(\sylv,U)$ denotes the connected
component of $\Gamma(\sylv)$ indexed by the decreasing tree $U$.

For the basis of the algebra $\mathbf{Baxter}$ given in \cite[Theorem~6.12]{giraudo_baxter2}, the definition using the
quasi-crystal graph is:
\[
\mathbf{P'}_{(T_L,T_R)} = \sum_{\substack{w \in \Gamma(\baxt,(U_L,U_R))\\\Sh{U_L,U_R} = (T_L,T_R)}} w,
\]
where $(T_L,T_R)$ is a pair of twin binary trees, $(U_L,U_R)$ ranges over the set of pairs made up of increasing and a
decreasing tree with the same infix reading and $\Gamma(\baxt,(U_L,U_R))$ denotes the connected component of
$\Gamma(\baxt)$ indexed by the pair $(U_L,U_R)$.

\subsection{Speculations}

In the quasi-crystal structure for the hypoplactic monoid, one constructs the graph using the quasi-Kashiwara operators
and one defines the congruence using labelled directed graph isomorphisms of connected components. If one replaces the
quasi-Kashiwara operators with the original Kashiwara operators, one obtains the plactic monoid. On the other hand, if
one requires that labelled directed graph isomorphisms be shape-preserving, one obtains the sylvester and Baxter
monoids. It is thus natural to ask what monoids arise when one makes both replacements and considers shape-preserving
isomorphisms on the graph arising from the original Kashiwara operators (see \fullref{Figure}{fig:venn}). (If one uses
the abstract notion of shape defined in \fullref{Section}{sec:abstractshapes} and the original Kashiware operators,
axiom~S1 should probably be replaced with `invariance under the Kashiwara operators', which is a natural analogue in
light of the discussion following axioms~S1 and~S2. Alternatively, one could seek a natural analogue of standardization
for the original Kashiwara operators.)

\begin{figure}[t]
  \centering
  \begin{tikzpicture}[x=18mm,y=18mm,every node/.style={align=center}]
    \pgfdeclarelayer{boxes}
    \pgfdeclarelayer{nodes}
    \pgfsetlayers{boxes,nodes,main}
    \begin{pgfonlayer}{nodes}
      \node (hypo) at (0,-1) {$\hypo$};
      \node (plac) at (-1,0) {$\plac$};
      \node[align=center] (sylv) at (1,0) {$\sylv$\\$\baxt$};
      \node[font=\Large] (half) at (0,1) {?};
      %
      %
      \node[rotate=-45,anchor=south,font=\small] at ($ (sylv) + (.5,.5) $) {Quasi-Kashiwara\\operators};
      \node[rotate=-45,anchor=south,font=\small] at ($ (half) + (.5,.5) $) {Kashiwara\\operators};
      \node[rotate=45,anchor=south,font=\small] at ($ (half) + (-.5,.5) $) {Shape-preserving\\isomorphisms};
      \node[rotate=45,anchor=south,font=\small] at ($ (plac) + (-.5,.5) $) {Isomorphisms};
    \end{pgfonlayer}
    \begin{pgfonlayer}{boxes}
      \path[rounded corners,fill=lightgray,opacity=.5] ($ (half)  + (-.9,.1) $) -- ++(.8,.8) -- ($ (sylv) + (.9,-.1) $) -- ++ (-.8,-.8) -- cycle;
      \path[rounded corners,fill=lightgray,opacity=.5] ($ (half)  + (.9,.1) $) -- ++(-.8,.8) -- ($ (plac) + (-.9,-.1) $) -- ++ (.8,-.8) -- cycle;
      \path[rounded corners,fill=lightgray,opacity=.5] ($ (plac)  + (-.9,.1) $) -- ++(.8,.8) -- ($ (hypo) + (.9,-.1) $) -- ++ (-.8,-.8) -- cycle;
      \path[rounded corners,fill=lightgray,opacity=.5] ($ (sylv)  + (.9,.1) $) -- ++(-.8,.8) -- ($ (hypo) + (-.9,-.1) $) -- ++ (.8,-.8) -- cycle;
    \end{pgfonlayer}
  \end{tikzpicture}
  \caption{(Quasi-)-Kashiwara operators, notions of isomorphisms, and monoids.}
  \label{fig:venn}
\end{figure}
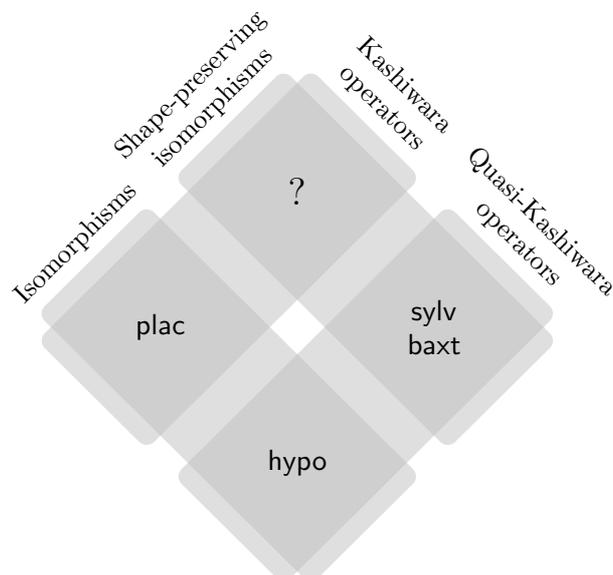

\bibliography{\jobname}
\bibliographystyle{alphaabbrv}

\end{document}